\documentclass[11pt,a4paper, reqno]{amsart}
\usepackage[utf8]{inputenc}               
\usepackage[a4paper,
left=4cm,
right=4cm,
top=2.9cm,
bottom=2.9cm]{geometry}
\usepackage{natbib}
\usepackage{mathtools}
\usepackage{dsfont}
\usepackage{booktabs}
\usepackage{float}
\usepackage{stmaryrd}

\DeclareFontFamily{U}{mathx}{}
\DeclareFontShape{U}{mathx}{m}{n}{<-> mathx10}{}
\DeclareSymbolFont{mathx}{U}{mathx}{m}{n}
\DeclareMathAccent{\widecheck}{0}{mathx}{"71}

\makeatletter
\newcases{crcases}{\quad}{%
	\hfil$\m@th\displaystyle{##}$\hfil}{\hfil$\m@th\displaystyle{##}$}{\lbrace}{.}
\makeatother

\usepackage{amsmath, amsthm, amssymb}
\usepackage{setspace}
\usepackage{tikz-cd}
\usepackage{tikz,tikz-3dplot, pgfmath, pgfplots}
\pgfplotsset{compat=1.18}
\usetikzlibrary{
	knots,
	hobby,
	decorations.pathreplacing,
	shapes.geometric,
	calc,
	decorations.markings
}

\usepackage{etoolbox}

\makeatletter
\let\old@tocline\@tocline
\let\section@tocline\@tocline

\newcommand{\subsection@dotsep}{4.5}
\newcommand{\subsubsection@dotsep}{4.5}
\patchcmd{\@tocline}
{\hfil}
{\nobreak
	\leaders\hbox{$\m@th
		\mkern \subsection@dotsep mu\hbox{.}\mkern \subsection@dotsep mu$}\hfill
	\nobreak}{}{}
\let\subsection@tocline\@tocline
\let\@tocline\old@tocline

\patchcmd{\@tocline}
{\hfil}
{\nobreak
	\leaders\hbox{$\m@th
		\mkern \subsubsection@dotsep mu\hbox{.}\mkern \subsubsection@dotsep mu$}\hfill
	\nobreak}{}{}
\let\subsubsection@tocline\@tocline
\let\@tocline\old@tocline

\let\old@l@subsection\l@subsection
\let\old@l@subsubsection\l@subsubsection

\def\@tocwriteb#1#2#3{%
	\begingroup
	\@xp\def\csname #2@tocline\endcsname##1##2##3##4##5##6{%
		\ifnum##1>\c@tocdepth
		\else \sbox\z@{##5\let\indentlabel\@tochangmeasure##6}\fi}%
	\csname l@#2\endcsname{#1{\csname#2name\endcsname}{\@secnumber}{}}%
	\endgroup
	\addcontentsline{toc}{#2}%
	{\protect#1{\csname#2name\endcsname}{\@secnumber}{#3}}}%

\newlength{\@tocsectionindent}
\newlength{\@tocsubsectionindent}
\newlength{\@tocsubsubsectionindent}
\newlength{\@tocsectionnumwidth}
\newlength{\@tocsubsectionnumwidth}
\newlength{\@tocsubsubsectionnumwidth}
\newcommand{\settocsectionnumwidth}[1]{\setlength{\@tocsectionnumwidth}{#1}}
\newcommand{\settocsubsectionnumwidth}[1]{\setlength{\@tocsubsectionnumwidth}{#1}}
\newcommand{\settocsubsubsectionnumwidth}[1]{\setlength{\@tocsubsubsectionnumwidth}{#1}}
\newcommand{\settocsectionindent}[1]{\setlength{\@tocsectionindent}{#1}}
\newcommand{\settocsubsectionindent}[1]{\setlength{\@tocsubsectionindent}{#1}}
\newcommand{\settocsubsubsectionindent}[1]{\setlength{\@tocsubsubsectionindent}{#1}}

\renewcommand{\l@section}{\section@tocline{1}{\@tocsectionvskip}{\@tocsectionindent}{\@tocsectionnumwidth}{\@tocsectionformat}}%
\renewcommand{\l@subsection}{\subsection@tocline{1}{\@tocsubsectionvskip}{\@tocsubsectionindent}{\@tocsubsectionnumwidth}{\@tocsubsectionformat}}%
\renewcommand{\l@subsubsection}{\subsubsection@tocline{1}{\@tocsubsubsectionvskip}{\@tocsubsubsectionindent}{\@tocsubsubsectionnumwidth}{\@tocsubsubsectionformat}}%
\newcommand{\@tocsectionformat}{}
\newcommand{\@tocsubsectionformat}{}
\newcommand{\@tocsubsubsectionformat}{}
\expandafter\def\csname toc@1format\endcsname{\@tocsectionformat}
\expandafter\def\csname toc@2format\endcsname{\@tocsubsectionformat}
\expandafter\def\csname toc@3format\endcsname{\@tocsubsubsectionformat}
\newcommand{\settocsectionformat}[1]{\renewcommand{\@tocsectionformat}{#1}}
\newcommand{\settocsubsectionformat}[1]{\renewcommand{\@tocsubsectionformat}{#1}}
\newcommand{\settocsubsubsectionformat}[1]{\renewcommand{\@tocsubsubsectionformat}{#1}}
\newlength{\@tocsectionvskip}
\newcommand{\settocsectionvskip}[1]{\setlength{\@tocsectionvskip}{#1}}
\newlength{\@tocsubsectionvskip}
\newcommand{\settocsubsectionvskip}[1]{\setlength{\@tocsubsectionvskip}{#1}}
\newlength{\@tocsubsubsectionvskip}
\newcommand{\settocsubsubsectionvskip}[1]{\setlength{\@tocsubsubsectionvskip}{#1}}

\patchcmd{\tocsection}{\indentlabel}{\makebox[\@tocsectionnumwidth][l]}{}{}
\patchcmd{\tocsubsection}{\indentlabel}{\makebox[\@tocsubsectionnumwidth][l]}{}{}
\patchcmd{\tocsubsubsection}{\indentlabel}{\makebox[\@tocsubsubsectionnumwidth][l]}{}{}

\newcommand{\@sectypepnumformat}{}
\renewcommand{\contentsline}[1]{%
	\expandafter\let\expandafter\@sectypepnumformat\csname @toc#1pnumformat\endcsname%
	\csname l@#1\endcsname}
\newcommand{\@tocsectionpnumformat}{}
\newcommand{\@tocsubsectionpnumformat}{}
\newcommand{\@tocsubsubsectionpnumformat}{}
\newcommand{\setsectionpnumformat}[1]{\renewcommand{\@tocsectionpnumformat}{#1}}
\newcommand{\setsubsectionpnumformat}[1]{\renewcommand{\@tocsubsectionpnumformat}{#1}}
\newcommand{\setsubsubsectionpnumformat}[1]{\renewcommand{\@tocsubsubsectionpnumformat}{#1}}
\renewcommand{\@tocpagenum}[1]{%
	\hfill {\mdseries\@sectypepnumformat #1}}

\let\oldappendix\appendix
\renewcommand{\appendix}{%
	\leavevmode\oldappendix%
	\addtocontents{toc}{%
		\protect\settowidth{\protect\@tocsectionnumwidth}{\protect\@tocsectionformat\sectionname\space}%
		\protect\addtolength{\protect\@tocsectionnumwidth}{2em}}%
}
\makeatother

\makeatletter
\settocsectionnumwidth{2em}
\settocsubsectionnumwidth{2.5em}
\settocsubsubsectionnumwidth{3em}
\settocsectionindent{1pc}%
\settocsubsectionindent{\dimexpr\@tocsectionindent+\@tocsectionnumwidth}%
\settocsubsubsectionindent{\dimexpr\@tocsubsectionindent+\@tocsubsectionnumwidth}%
\makeatother

\settocsectionvskip{10pt}
\settocsubsectionvskip{0pt}
\settocsubsubsectionvskip{0pt}

\settocsectionformat{\bfseries}%
\settocsubsectionformat{\mdseries}
\settocsubsubsectionformat{\mdseries}
\setsectionpnumformat{}
\setsubsectionpnumformat{\mdseries}
\setsubsubsectionpnumformat{\mdseries}

\theoremstyle{plain}
\theoremstyle{definition}
\newtheorem{Theorem}{Theorem}[section]
\definecolor{shadecolor}{RGB}{217, 255, 217}

\newtheorem{Lemma}[Theorem]{Lemma}
\newtheorem*{theorem*}{Theorem}

\newtheorem{Proposition}[Theorem]{Proposition}

\newtheorem{Corollary}[Theorem]{Corollary}
\newtheorem{definition}[Theorem]{Definition}
\newtheorem{Remark}[Theorem]{Remark}

\numberwithin{equation}{section}

\DeclareMathOperator{\supp}{supp}

\def\pdop2{\ensuremath{\scalebox{1.125}{\dh}}}
\def\pdop{\ensuremath{\P\llbracket\mathfrak{d}\rrbracket}}

\usepackage{mathrsfs}

\usepackage[final]{pdfpages}

\usepackage[colorlinks=blue, urlcolor=blue, citecolor=black]{hyperref}

\renewcommand{\arraystretch}{2}

\renewcommand{\P}{\mathsf{P}}

\newcommand{\C}{\mathbb{C}}

\newcommand{\R}{\mathbb{R}}

\newcommand{\N}{\mathbb{N}}

\title[An inverse spectral problem for the wave equation]{An inverse spectral problem for the wave equation with a varying Robin parameter at the boundary}
\author[A. Hannukainen, P. Kulikov, M. Lassas, L. Oksanen]{Antti Hannukainen, Petr Kulikov, Matti Lassas, Lauri Oksanen}

\begin{document}	
\maketitle
\thispagestyle{empty}
\begin{quote}
	\footnotesize
	{\sc Abstract.} An inverse spectral problem for a $(1+1)$-dimensional wave equation with wave speed $c(x)$ is considered. The spectral data consist of the eigenvalues of the Neumann--Dirichlet problem for the operator $-c^2(x)\partial_x^2$ together with their derivatives with respect to a Robin parameter at the boundary. We propose a two-step recovery procedure. In the first step, the Neumann-to-Dirichlet map $\Lambda$ is reconstructed from the spectral data using the results of \cite{kurylev_lassas_weder}. In the second step, the reconstructed $\Lambda$ is used within the regularization strategy for the boundary control method presented in \cite{korpela_lassas_oksanen}. As an improvement, the regularization of the control problems employs a weighted combination of $L^2$ and $H^1$ penalty terms, and we prove a uniform convergence result for the family of optimal controls, thereby allowing the regularization parameters to be tuned simultaneously.

	We also prove that the forward spectral map, which assigns to a wave speed its spectral data, is Lipschitz continuous, and that the inverse spectral map underlying the reconstruction procedure is H\"older continuous on a subclass of wave speeds.

	The method is implemented numerically and validated on both smooth and piecewise smooth wave speed profiles. Numerical experiments demonstrate that the recovery of the wave speed exhibits high accuracy and robustness.
\end{quote}

\medskip


\section{Introduction}
\subsection{Problem setting and main results}\label{intro}
Let $\Omega = (0,1)$. The wave speed function $c\in C^2(\mathbb{R})$ satisfies
$$c \geqslant C_{min} > 0,
\qquad
c|_{\mathbb{R}\setminus\Omega} = 1,
\qquad
\|c\|_{C^2(\mathbb{R})} \leqslant C_{max},$$
for some constants $C_{min}\in(0,1]$ and $C_{max}>0$. Let $T>0$, and let $u:[0,2T]\times\Omega\to\mathbb{R}$ and $f\in L^2(0,2T)$. The $(1+1)$-dimensional wave equation problem reads
$$(\mathcal{W}_f)\;:\;\begin{cases}
	\partial_t^2 u - c^2(x)\,\partial_x^2 u = 0, & \text{in }(0,2T) \times\Omega,\\
	\partial_x u(t,0) = f(t), & t\in (0,2T),\\
	u(t,1) = 0, & t\in (0,2T),\\
	u(0,x)=\partial_tu(0,x) = 0, & \text{in }\Omega.
\end{cases}$$
The function $c$ is assumed to equal $1$ outside $\Omega$, reflecting the physical setting where the medium is homogeneous beyond the domain of interest. The condition on $T$ guarantees that waves carry
information about the whole of $\Omega$. The \emph{Neumann-to-Dirichlet map} $\Lambda: L^2(0,2T)\to L^2(0,2T)$ is defined by
$$\Lambda\,:\,f\mapsto u|_{x=0},\qquad f\in L^2(0,2T),$$
where $u$ is the solution to $(\mathcal{W}_f)$. The boundary measurements are encoded in the Neumann-to-Dirichlet map
$\Lambda$, which records the Dirichlet trace of the solution at
$x=0$ in response to a prescribed Neumann datum $f$ at the same boundary point. A standard inverse problem is to recover the wave speed function $c(x)$ given the Neumann-to-Dirichlet map $\Lambda$.

An inverse spectral problem is the task of reconstructing the wave speed function $c(x)$ from a spectral data of the differential operator $-c^2(x)\partial_x^2$. A natural approach to the forward problem for wave equation on $\Omega$ is the separation of variables, which decomposes the problem into a time oscillation equation and a spatial spectral problem. Since the time oscillation equation has constant coefficients, it carries no information about the medium, whereas all dependence on $c(x)$ enters through the spatial operator $-c^2(x)\partial_x^2$. The spatial part solves a Sturm–Liouville type spectral problem. Thus, for spectral data, the spatial spectral problem is the object of interest. 

In the unperturbed case, we solve the Neumann--Dirichlet spectral problem,
$$(\mathcal{S})\;:\;\begin{cases}
	-c^2(x)\,\partial_x^2\phi = \lambda\,\phi, & \text{in }\Omega,\\
	\partial_x\phi(0) = 0,\\
	\phi(1) = 0.
\end{cases}$$
To obtain a richer data set, we proceed in analogy with the classical Borg--Levinson ``two-spectra'' problem and its multidimensional extensions. In this problem, an unknown coefficient of a differential operator is recovered from two spectra corresponding to two different boundary conditions. We therefore introduce a one-parameter family of Robin-type boundary conditions at \(x=0\). That is, for \(\omega\in(0,1)\), we study the perturbed spectral problems
$$(\mathcal{S}_\omega)\;:\;\begin{cases}
	-c^2(x)\,\partial_x^2\phi = \lambda\,\phi, & \text{in }\Omega,\\
	\partial_x\phi(0)+\omega\,\phi(0) = 0,\\
	\phi(1) = 0.
\end{cases}$$
For $\omega=0$, a perturbed problem reduces to the Neumann--Dirichlet spectral problem $(\mathcal{S})$. Following physical literature, the coefficient $\omega$ in the Robin boundary condition above is often called the \emph{impedance}. Robin boundary conditions are also called impedance boundary conditions from their application in electromagnetic problems. The eigenvalues $\{\lambda_n(\omega)\}_{n\in\N}$ of the problems $(\mathcal{S}_\omega)$, $\omega\in [0,1)$, form the spectral data from which we seek to reconstruct $c(x)$. In particular, the spatial operator $-c^2(x)\partial_x^2$, together with the boundary conditions in $(\mathcal{S}_\omega)$, fully encodes the dependence on $c(x)$. Consequently, the family of spectral problems $(\mathcal{S}_\omega)$ is the gateway between the unknown wave speed $c(x)$ and the spectral data used in our inverse problem.

In Section \ref{isp} we show that for each $n\in\N$ the corresponding eigenvalue curve $\lambda_n\,:\,\omega\mapsto \lambda_n(\omega)$ of $(\mathcal{S}_\omega)$ is differentiable with respect to $\omega$, and the derivatives at some $\omega\in [0,1)$ are denoted by $\lambda_{n}'(\omega)$. Moreover, these derivatives encode the boundary behaviour of the eigenfunctions at $x=0$ and thus provide a formally determined data set for the inverse spectral problem. Let $\mathcal{C}$ be the admissible class of wave speeds,
$$\mathcal{C} := \left\{c\in C^2(\R) :
c\geqslant C_{min},\ \|c\|_{C^2(\R)}\leqslant C_{max},\ c|_{\R\setminus\Omega}\equiv 1 \right\}.$$
For every $\omega\in[0,1)$ each $c\in\mathcal{C}$ fully determines the eigenvalues $\{\lambda_n(\omega)\}_{n\in\N}$ of $(\mathcal{S}_\omega)$ and hence the sequence $\{\lambda_{n}'(0)\}_{n\in\N}$. Define the \textit{forward spectral map} by 
$$\Phi(c) = \left\{\lambda_n(0),\lambda_{n}'(0)\right\}_{n\in\N}.$$ 
Let $\mathcal{D}=\Phi(\mathcal{C})$ be the corresponding set of admissible spectral data. 

The goal of the paper is to show that the sequences $\{\lambda_n(0)\}_{n\in\N}$ and $\{\lambda_{n}'(0)\}_{n\in\N}$ uniquely determine the wave speed $c(x)$ in $\Omega$, that is, that the inverse map $\Phi^{-1}:\mathcal{D}\to\mathcal{C}$ exists, and to quantify the stability of this correspondence in both directions. 

Now, we present the main results of the paper.
\begin{Theorem}\label{3mainres}
	\textit{The forward spectral map $\Phi:\mathcal{C}\to\mathcal{D}$ is bijective.}
\end{Theorem}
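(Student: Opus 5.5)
Since $\mathcal{D}=\Phi(\mathcal{C})$ by definition, $\Phi$ is surjective onto $\mathcal{D}$, and the content of the theorem is injectivity: if $c_1,c_2\in\mathcal{C}$ satisfy $\Phi(c_1)=\Phi(c_2)$, then $c_1=c_2$. The plan is to pass from the spectral data to the Neumann-to-Dirichlet map $\Lambda$, and then from $\Lambda$ to $c$.

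\textbf{Step 1 (rewriting the data as a spectral measure).} Work in the Hilbert space $L^2(\Omega; c^{-2}dx)$. There the operator $A=-c^2\partial_x^2$ with the boundary conditions of $(\mathcal{S}_\omega)$ is self-adjoint, its spectrum is simple, and the eigenvalues satisfy $0<\lambda_1<\lambda_2<\dots\to\infty$. Let $\phi_n$ be the eigenfunctions of $(\mathcal{S})$, normalized by $\int_\Omega|\phi_n|^2c^{-2}\,dx=1$. The quadratic form of $(\mathcal{S}_\omega)$ is
$$q_\omega(\phi)=\int_\Omega|\phi'|^2\,dx-\omega|\phi(0)|^2 .$$
By the Hellmann--Feynman formula (analytic perturbation theory for simple eigenvalues), this gives
$$\lambda_n'(0)=-|\phi_n(0)|^2 .$$
Hence $\Phi(c)$ determines the pairs $\{\lambda_n,|\phi_n(0)|^2\}_{n\in\N}$. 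These are exactly the Gel'fand--Levitan spectral data, that is, the boundary spectral data in the sense of \cite{kurylev_lassas_weder}.

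\textbf{Step 2 (recovering $\Lambda$ from the data).} Solve $(\mathcal{W}_f)$ by separation of variables, expanding $u(t,\cdot)=\sum_n u_n(t)\phi_n$. Green's formula turns the Neumann source at $x=0$ into the coefficient equation
$$u_n''+\lambda_n u_n=-f(t)\,\phi_n(0),$$
where the sign depends on the convention. Solving with zero initial data and taking the trace at $x=0$ gives
$$\Lambda f(t)=-\sum_n |\phi_n(0)|^2\int_0^t\frac{\sin\big(\sqrt{\lambda_n}(t-s)\big)}{\sqrt{\lambda_n}}\,f(s)\,ds .$$
Only the quantities from Step 1 enter. The series converges in the appropriate sense because $|\phi_n(0)|^2$ is bounded and $\lambda_n\sim n^2$, which one checks by the usual asymptotics and justifies in $L^2(0,2T)$ via energy estimates. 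Consequently, equal spectral data give $\Lambda_{c_1}=\Lambda_{c_2}$ on $L^2(0,2T)$, for every $T$.

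\textbf{Step 3 (from $\Lambda$ to $c$).} This is the step I expect to be the main obstacle, since it carries the real uniqueness content; the spectral-to-$\Lambda$ passage is fairly mechanical. Take $T$ larger than the travel time $\int_0^1 c^{-1}\,dx$, which is bounded by $1/C_{min}$. Apply the boundary control method, via the Blagoveshchenskii identity, in the form of \cite{korpela_lassas_oksanen}. This recovers the inner products $(u^f(T),u^h(T))_{L^2(c^{-2}dx)}$, and from them the volumes of the domains of influence $M(s)=\{x:\tau(x)\le s\}$, where $\tau(x)=\int_0^x c^{-1}$ is the travel time.

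The function $V(s)=\int_{M(s)}c^{-2}\,dx$ then determines $c$ in travel-time coordinates, because $V'(s)=c(x(s))^{-1}$. Integrating the relation $dx/ds=c(x(s))$ recovers $x(s)$, and therefore $c(x)$ on $\Omega$. Hence $c_1=c_2$ on $\Omega$, and since both equal $1$ outside $\Omega$, they coincide on $\R$.

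Alternatively, Step 3 can avoid control theory: the data $\{\lambda_n,|\phi_n(0)|^2\}$ determine the norming constants, and after the Liouville transformation the classical Gel'fand--Levitan/Marchenko uniqueness theorem applies. This uses that $c\in C^2$, so the resulting potential $q=c^{1/2}\,(c^{-1/2})''$ (up to convention) is continuous. The length of the transformed interval is recovered from the asymptotics $\sqrt{\lambda_n}\sim (n-\tfrac12)\pi/\int_0^1c^{-1}$.
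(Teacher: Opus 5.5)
Your route (Hellmann--Feynman for $|\phi_n(0)|^2$, eigenfunction expansion for $\Lambda$, boundary control for the travel-time volume $V$, then $c(\chi(s))=1/V'(s)$) is the paper's route. Step~3, however, has a gap.

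You import the half-line method of \cite{korpela_lassas_oksanen} and assert that it yields the volumes of all domains of influence once $T$ exceeds the travel time. On $(0,1)$ the Dirichlet end reflects waves. The identity that produces the volumes, $(u^f(T),1)_{L^2_c}=(f,B1)_{L^2(0,2T)}$, comes from $\frac{d^2}{dt^2}(u^f(t),1)_{L^2_c}=\partial_xu^f(t,1)-f(t)$. It therefore holds only if $\partial_xu^f(t,1)=0$ for $t<T$, that is, only for controls supported in $[T-r,T]$ with $r\leqslant\tau(1)$. Enlarging $T$ changes nothing, since the wave reaches $x=1$ at time $T-r+\tau(1)$. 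For $r>\tau(1)$ the functional being minimized is no longer the misfit to $\mathds{1}_{\Omega(r)}$, and the quantity computed is not $V(r)$.

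So the data give $V$ only on $[0,\tau(1)]$, and this range depends on the unknown $c$. For $c_1,c_2$ with equal data you only get $V_1=V_2$ on $[0,r_\ast]$, where $r_\ast=\min\{\tau_1(1),\tau_2(1)\}$. You must still show that this interval covers $\overline\Omega$ for both speeds.

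The paper closes this as follows. The formula $\chi_i(t)=\int_0^t ds/V_i'(s)$ forces $\chi_1=\chi_2$ on $[0,r_\ast]$. Also $\chi_i(r_\ast)=1$ for the index $i$ attaining the minimum. Hence $\chi_1(r_\ast)=\chi_2(r_\ast)=1$, and $c_1=c_2$ on $[0,1]$.

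Your own last paragraph gives an alternative fix. The quantity $\tau(1)=\lim_{n\to\infty}\pi(n-\tfrac12)/\sqrt{\lambda_n}$ is a spectral invariant, so $\tau_1(1)=\tau_2(1)$ from the start.

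That Gel'fand--Levitan alternative is itself a sound and genuinely different route. Because $c(0)=1$ and $c'(0)=0$, the data are exactly the eigenvalues and the norming constants $|\phi_n(0)|^{-2}$ of the Liouville-transformed Neumann--Dirichlet problem. You should, however, add the step from $q$ back to $c$. With $\tilde c=c\circ\chi$, the function $g=\tilde c^{-1/2}$ solves $-g''+qg=0$ with $g(0)=1$ and $g'(0)=0$. Thus $q$ determines $\tilde c$ on $[0,\tau(1)]$, and $\chi(\tau)=\int_0^\tau\tilde c$ then recovers $c$.

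This route trades the control-theoretic argument for classical uniqueness theorems. The paper's boundary-control route is instead chosen because it is constructive and feeds directly into the regularized reconstruction and the stability analysis.
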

Let $\ell^\infty_s$, $s\in\R$, be the space of real sequences $a=\{a_n\}_{n\in\N}$ with 
$$\|a\|_{\ell^\infty_s}=\sup \{n^{-s}|a_n|\,:\,n\in\N\}<+\infty.$$ 
For nonnegative quantities $A$ and $B$ we write $A\lesssim B$ if $A\leqslant CB$ with a constant $C>0$ depending only on $C_{min}$, $C_{max}$ and $T$, and $A\asymp B$ if $A\lesssim B$ and $B\lesssim A$. When the constant is allowed to depend on an additional parameter $\theta$, we write $A\lesssim_\theta B$. 

We show that the forward spectral map $\Phi$ is Lipschitz continuous as a map from $\mathcal{C}$ to $\ell^\infty_2\times\ell^\infty_0$, where $\mathcal{C}$ is equipped with the $C^2(\overline\Omega)$ norm and $\ell^\infty_2\times\ell^\infty_0$ with the sum of the norms.
\begin{Proposition}\label{4mainres}
\textit{For any $c_1,c_2\in\mathcal{C}$, $\|\Phi(c_1)-\Phi(c_2)\|_{\ell^\infty_2\times\ell^\infty_0}\lesssim\|c_1-c_2\|_{C^2(\overline\Omega)}$.}
\end{Proposition}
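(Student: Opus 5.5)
We plan to estimate the two components of $\Phi(c_1)-\Phi(c_2)$ separately, using the variational (weighted Sturm--Liouville) structure of $(\mathcal{S}_\omega)$. Writing $-c^2\partial_x^2\phi=\lambda\phi$ as $-\phi''=\lambda c^{-2}\phi$, the problem $(\mathcal{S}_\omega)$ is self-adjoint in $L^2(\Omega;c^{-2}dx)$, with quadratic form
$$Q_\omega(\phi)=\int_0^1|\phi'|^2\,dx-\omega|\phi(0)|^2$$
on $H=\{\phi\in H^1(\Omega):\phi(1)=0\}$ (the boundary term enters with sign $-\omega$ because $\phi'(0)=-\omega\phi(0)$). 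For $\omega=0$ the form does not depend on $c$, so $c$ enters only through the weight $\rho=c^{-2}$. Hellmann--Feynman, which is the differentiability result of Section~\ref{isp}, gives
$$\lambda_n'(0)=-\frac{|\phi_n(0)|^2}{\int_0^1\phi_n^2\,c^{-2}dx},$$
i.e.\ $\lambda_n'(0)=-\phi_n(0)^2$ for eigenfunctions normalized in the weighted space.

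\textbf{Eigenvalue estimate.} By the min--max principle,
$$\lambda_n(0)=\min_{\dim V=n}\max_{\phi\in V}\frac{\int|\phi'|^2}{\int \phi^2 c^{-2}}.$$
Since $c_2^{-2}\ge c_1^{-2}(1-K\|c_1-c_2\|_\infty)$ with $K\lesssim 1$, the Rayleigh quotients satisfy $R_2\le R_1/(1-K\delta)$, where $\delta=\|c_1-c_2\|_\infty$. Exchanging the roles of $c_1,c_2$ gives $|\lambda_n^{(1)}-\lambda_n^{(2)}|\lesssim \delta\,\lambda_n$. The comparison with the case $c\equiv C_{\min}$ or $C_{\max}$ (again via min--max) yields $\lambda_n\asymp n^2$, so the $\ell^\infty_2$ part is bounded by $\lesssim\|c_1-c_2\|_{C^0}$.

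\textbf{Derivative estimate, which is the main obstacle.} We need $|\phi^{(1)}_n(0)^2-\phi^{(2)}_n(0)^2|\lesssim\|c_1-c_2\|_{C^2}$ uniformly in $n$. Crude perturbation theory fails, because the spectral gaps are $\sim n$ while the perturbation of the operator is of size $\sim n^2\delta$. We therefore plan to use the Liouville transformation. Set $y=\int_0^x c^{-1}$, $L=\int_0^1 c^{-1}$, and $\phi=c^{1/2}w$. This turns the equation into $-w''+q(y)w=\lambda w$ on $(0,L)$, with
$$q=\frac{(c^{1/2})''_{yy}}{c^{1/2}}$$
expressed in terms of $c,c',c''$. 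Hence $\|q_1-q_2\|_\infty\lesssim\|c_1-c_2\|_{C^2}$ and $|L_1-L_2|\lesssim\delta$. The boundary condition at $0$ becomes a Robin condition with coefficient depending on $c'(0)=0$, since $c\equiv1$ outside $\Omega$ and $c\in C^2$; likewise $c(0)=1$. Then $\phi_n(0)=w_n(0)$, and the normalization $\int\phi^2c^{-2}dx=\int w^2\,dy$ holds.

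Next we rescale $y\mapsto y L/\pi$ to a fixed interval. The transformation-operator (Gelfand--Levitan) or Prüfer representation $w(y,\lambda)=\cos(\sqrt\lambda y)+\int_0^y K(y,t)\cos(\sqrt\lambda t)\,dt$, with the kernel $K$ Lipschitz in $q$ in sup norm, together with the standard asymptotics, gives
$$w_n(0)^2=\frac{1}{\|w(\cdot,\lambda_n)\|^2_{L^2}},$$
where the norm is an integral of $\cos^2$ plus Lipschitz perturbation terms. Differentiating with respect to $\lambda$ costs a factor $y/\sqrt\lambda$, which compensates the eigenvalue shift $|\sqrt{\lambda_n^{(1)}}-\sqrt{\lambda_n^{(2)}}|\lesssim n\delta$ coming from the previous step. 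This gives $|\lambda_n^{(1)\prime}(0)-\lambda_n^{(2)\prime}(0)|\lesssim\|q_1-q_2\|_\infty+|L_1-L_2|+\delta\lesssim\|c_1-c_2\|_{C^2}$ uniformly in $n$.

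Controlling the $n$-uniformity of this last step, in particular the $\sqrt\lambda$-derivative of $\|w(\cdot,\lambda)\|^2$ via an estimate of $\partial_\lambda K$-type terms bounded by $C(1+\sqrt\lambda)^{-1}\cdot\sqrt\lambda$, is where we expect the real work to lie.
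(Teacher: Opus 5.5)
Your eigenvalue step is the paper's Lemma~\ref{weylsn2}. The factor $(1-K\delta)^{-1}$ needs $K\delta<1$, but for larger $\delta$ the bound follows from $\lambda_n\asymp n^2$.

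\textbf{The gap.} It is in the derivative step, at the assertion $\|q_1-q_2\|_\infty\lesssim\|c_1-c_2\|_{C^2}$, which is false on $\mathcal{C}$. The potentials are $q_i=\bigl(\tfrac14(c_i')^2-\tfrac12c_ic_i''\bigr)\circ\chi_i$. Comparing them at the same travel time $y$ (or after rescaling to a fixed interval) evaluates $c_1''$ at the two points $\chi_1(y)$ and $\chi_2(y)$, which differ by $O(\delta)$. For $c\in\mathcal{C}$ the second derivative is only continuous, so $|c_1''(\chi_1(y))-c_1''(\chi_2(y))|$ is controlled only by the modulus of continuity of $c_1''$ at scale $\delta$. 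For example, suppose $c_1''$ behaves like $|x-x_0|^{1/2}$ near some $x_0$, and $c_2=c_1+\delta h$ with smooth $h>0$ on $(0,x_0)$. Then $\|c_1-c_2\|_{C^2}\asymp\delta$ but $\|q_1-q_2\|_\infty$ is of order $\delta^{1/2}$. A transformation kernel that is Lipschitz in $q$ in the sup norm then gives only a H\"older-type bound, not a Lipschitz one. Your claim would hold if $c''$ were uniformly Lipschitz, as on $\mathcal{C}'$, but not on $\mathcal{C}$.

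\textbf{The missing idea.} Steps~2--3 of Lemma~\ref{eigderdiff0} use the divergence structure $q=a^2-\partial_\tau a$ with $a=\tfrac12 c'\circ\chi$. Since $c'$ is $C_{max}$-Lipschitz, $b=a_2-a_1$ satisfies $\|b\|_\infty\lesssim\delta$ and $b(0)=0$, and $\|a_2^2-a_1^2\|_\infty\lesssim\delta$. The term $S_{\lambda}[b'v]$ is then integrated by parts, so the derivative falls on $\lambda^{-1/2}\sin((\tau-t)\sqrt{\lambda})\,v(t)$. This is $O(1)$ because $\|v'\|_\infty\lesssim n$. In other words, $q_1-q_2$ is $O(\delta)$ only in a negative-order norm, and any kernel or Volterra estimate you use must be proved in such a norm.

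\textbf{The $n$-uniformity mechanism.} The normalized solutions differ pointwise by $O(n\delta)$, not $O(\delta)$. Already $|\cos(y\sqrt{\lambda_1})-\cos(y\sqrt{\lambda_2})|\leqslant y|\sqrt{\lambda_1}-\sqrt{\lambda_2}|\lesssim n\delta$. The factor $y/\sqrt{\lambda}$ from differentiating in $\lambda$ only reduces the shift $n^2\delta$ to $n\delta$; it does not compensate it. The paper gets $O(\delta)$ for the normalization integral in two ways. First, it integrates the leading $\cos^2$ term explicitly; its $\lambda$-derivative is $O(\lambda^{-1})$ against a shift $|\lambda_1-\lambda_2|\lesssim n^2\delta$. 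Second, it uses that the remainder $S_\lambda[qv]$ is $O(n^{-1})$, so the cross terms are $O(n\delta\cdot n^{-1})$. Accordingly you need $\partial_{\sqrt\lambda}\|w(\cdot,\lambda)\|^2_{L^2}=O(\lambda^{-1/2})$. The bound $C(1+\sqrt\lambda)^{-1}\sqrt\lambda=O(1)$ you propose would give only $O(n\delta)$.

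\textbf{Different intervals.} Instead of rescaling, the paper works on the common interval $[0,C_{min}^{-1}]$. This is possible because $c\equiv1$ outside $\Omega$, so $q_c\equiv0$ beyond $\tau_c(1)$, and the mismatch of the normalization intervals costs $O(\delta)$. Rescaling composes $q$ with a further $\delta$-dependent change of variables. The divergence-structure argument would then have to be carried through that as well.
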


We call $\Phi^{-1}$ the \textit{inverse spectral map}. Fix $d\in (0,1)$ such that $d> \tfrac{C'}{C'+1}$, where $C'=C_{min}^{-1}-C_{max}^{-1}$. It is possible to choose such $d$, since $C'\geqslant0$ due to $C_{min}\leqslant 1\leqslant C_{max}$. For the inverse direction we work on the subclass 
$$\mathcal{C}'=\{c\in\mathcal{C}\cap C^3(\R):\|c\|_{C^3(\R)}\leqslant C_{max},c|_{\R\setminus(0,1-d)}\equiv 1 \},\quad \mathcal{D}'=\Phi(\mathcal{C}').$$
The additional derivative and the buffer $[1-d,1]$ are both required by the regularization strategy of \cite{korpela_lassas_oksanen}, which is formulated on the half line; in our case the Dirichlet condition at $x=1$ reflects the waves, so the reconstruction must stop before the reflected signal returns, and the lower bound on $d$ is what allows a single stopping time chosen a priori to serve every $c\in\mathcal{C}'$; see Section~\ref{invstabr}. We show that $\Phi^{-1}|_{\mathcal{D}'}$ (below simply $\Phi^{-1}$) is H\"older continuous with every exponent below $\kappa/2$, where $\kappa=1/54$ is the exponent in the stability estimate of \cite{korpela_lassas_oksanen} for the recovery of $c$ from $\Lambda$. Thus small perturbations of the spectral data lead to controlled perturbations of the reconstructed wave speed.
\begin{Theorem}\label{5mainres}
\textit{The inverse spectral map $\Phi^{-1}:\mathcal{D}'\to\mathcal{C}'$ is H\"older continuous from $\ell^\infty_2\times\ell^\infty_0$ to $C^2(\overline\Omega)$ with every exponent $\theta<\frac{\kappa}{2}$, that is, for any $c_1,c_2\in\mathcal{C}'$,
$$\|c_1-c_2\|_{C^2(\overline\Omega)}\lesssim_{\theta}\|\Phi(c_1)-\Phi(c_2)\|^{\theta}_{\ell^\infty_2\times\ell^\infty_0},\quad \theta\in \left(0,\frac\kappa2\right).$$
Moreover, if $\varepsilon=\|\Phi(c_1)-\Phi(c_2)\|_{\ell^\infty_2\times\ell^\infty_0}\leqslant\frac12$, then 
$$\|c_1-c_2\|_{C^2(\overline\Omega)}\lesssim\big(\varepsilon\log(1/\varepsilon)\big)^{\kappa/2}.$$}
\end{Theorem}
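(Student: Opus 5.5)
Writing $\varepsilon=\|\Phi(c_1)-\Phi(c_2)\|_{\ell^\infty_2\times\ell^\infty_0}$, the plan is a two-step composition: spectral data $\to$ Neumann-to-Dirichlet map $\to$ wave speed. First, for $c_1,c_2\in\mathcal{C}'$, write $\Lambda_j$ via the spectral representation of \cite{kurylev_lassas_weder}. The derivative $\lambda_n'(0)$ equals $-\phi_n(0)^2$ for eigenfunctions normalized in $L^2(\Omega,c^{-2}dx)$; this follows from Hadamard-type differentiation of the Robin condition, shown in Section \ref{isp}. The response operator is then $\Lambda f(t)=\int_0^t k(t-s)f(s)\,ds$ with kernel
$$k(t)=\sum_{n}\phi_n(0)^2\,\frac{\sin(\sqrt{\lambda_n}\,t)}{\sqrt{\lambda_n}}$$
up to sign conventions, with $\phi_n(0)^2=|\lambda_n'(0)|$. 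The first goal is an estimate of the form
$$\|\Lambda_1-\Lambda_2\|_{L^2(0,2T)\to L^2(0,2T)}\lesssim \varepsilon\log(1/\varepsilon)\quad(\varepsilon\le \tfrac12),$$
or, in the crude version, $\lesssim_\theta \varepsilon^{2\theta/\kappa}$.

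To prove this, split the kernel difference at a cutoff $N$. For $n\le N$, use $|\lambda_n^{(1)}-\lambda_n^{(2)}|\le\varepsilon n^2$ and $|\phi^{(1)}_n(0)^2-\phi^{(2)}_n(0)^2|\le\varepsilon$. Together with Weyl asymptotics $\lambda_n\asymp n^2$ (uniform over $\mathcal{C}$), this makes each term of the kernel difference $O(\varepsilon(1+t)/n)$ on $[0,2T]$. Summing over $n\le N$ gives $\varepsilon\log N$.

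The tail $n>N$ is the main obstacle, since the kernel series only converges conditionally. I would not estimate the tail of the kernel pointwise. Instead I would test against smooth $f$ or smooth in time, or use the operator form $\Lambda f=\sum_n\phi_n(0)^2\langle\text{Duhamel}\rangle$. Integrating by parts in time gives a decay $n^{-2}$ for $f\in H^1$, with a tail of size $1/N$. For general $L^2$ data one exploits the uniform asymptotics $\sqrt{\lambda_n}=\pi(n-\tfrac12)/L+O(1/n)$ and $\phi_n(0)^2=2/L+O(1/n^2)$, where $L=\int_0^1c^{-1}$ is common to $c_1$ and $c_2$ up to $O(\varepsilon)$. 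The leading parts then cancel as an explicit trigonometric series (a reflected-wave kernel), and the remainders are $O(n^{-2})$, summable with tail $1/N$. Choosing $N\sim\varepsilon^{-1}$ yields the bound $\varepsilon\log(1/\varepsilon)$. Obtaining these asymptotics uniformly on $\mathcal{C}'$ is where the $C^3$ bound helps.

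Second, apply the stability estimate of \cite{korpela_lassas_oksanen}. On $\mathcal{C}'$ the buffer $[1-d,1]$ and the choice $d>C'/(C'+1)$ guarantee a common stopping time before reflections from $x=1$ return. Consequently $\Lambda$ restricted to that time window coincides with the half-line response operator of that paper. This gives
$$\|c_1-c_2\|_{L^\infty}\lesssim\|\Lambda_1-\Lambda_2\|^{\kappa}.$$
To pass to $C^2(\overline\Omega)$, interpolate between $L^\infty$ and the a priori $C^3$ bound $\|c_1-c_2\|_{C^3}\le2C_{max}$; this costs part of the exponent. I expect the factor $1/2$ in $\kappa/2$ to come from this interpolation (or from a square-root loss in the first step), and I would adjust the bookkeeping to land on exponent $\kappa/2$ times the $\Lambda$-rate. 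Composing, one gets $(\varepsilon\log(1/\varepsilon))^{\kappa/2}$ for $\varepsilon\le1/2$. For every $\theta<\kappa/2$ the logarithm is absorbed, using $\varepsilon\log(1/\varepsilon)\lesssim_\delta\varepsilon^{1-\delta}$. Finally, for $\varepsilon>1/2$ the estimate is trivial, because $\|c_1-c_2\|_{C^2}\le 2C_{max}$.
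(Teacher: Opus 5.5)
The gap is in your first step, in the range $n\leqslant N$. The per-term bound $O(\varepsilon(1+t)/n)$ is false for the part caused by the eigenvalue shift. Indeed, $\partial_\lambda\bigl(\sin(t\sqrt\lambda)/\sqrt\lambda\bigr)=\frac{t\cos(t\sqrt\lambda)}{2\lambda}-\frac{\sin(t\sqrt\lambda)}{2\lambda^{3/2}}$ is of size $t/n^2$ for $\lambda\asymp n^2$, so $|\lambda^{(1)}_n-\lambda^{(2)}_n|\leqslant\varepsilon n^2$ only gives $O(\varepsilon(1+t))$ per term. This is sharp: changing the travel time $\tau(1)$ by $O(\varepsilon)$ shifts $\sqrt{\lambda_n}$ by about $\varepsilon n$, hence the phase by $\varepsilon nt$, and dividing by $\sqrt{\lambda_n}\asymp n$ leaves $\varepsilon t$. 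Summing over $n\leqslant N\sim\varepsilon^{-1}$ then gives $O(1)$, not $\varepsilon\log N$.

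Your asymptotic expansion does not rescue this range. For $n\leqslant N$ the data control the difference of the $O(1/n)$ corrections to $\sqrt{\lambda_n}$ only up to $O(\varepsilon n)$, so the remainder differences are again $O(\varepsilon)$ per term. Hence the $L^2\to L^2$ rate $\varepsilon\log(1/\varepsilon)$ is not established. The exponent bookkeeping is also left open. The first mechanism you name, interpolating $L^\infty$ against the $C^3$ bound, gives $\|w\|_{C^2}\lesssim\|w\|_{L^\infty}^{1/3}\|w\|_{C^3}^{2/3}$, so even granting your first step it would only yield $\kappa/3$.

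The paper gains the missing factor $1/n$ by integrating once in time, that is, by estimating $\Lambda^{c_1}-\Lambda^{c_2}$ as an operator $L^2\to H^{-1}$. Testing against $\varphi\in H^1_0(0,2T)$ replaces the kernel by $g_\lambda(\rho)=(1-\cos(\rho\sqrt\lambda))/\lambda$, for which $\|g_\lambda-g_\mu\|_{L^\infty(0,2T)}\lesssim\min\{n^{-2},n^{-1}\varepsilon\}$. The split at $N=\lceil\varepsilon^{-1}\rceil$ then gives $\varepsilon\log(1/\varepsilon)$; this is the dual form of your $H^1$-data tail argument.

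The way back to $L^2\to L^2$ uses two ingredients missing from your proposal:
\begin{itemize}
\item the uniform hidden-regularity bound $\|\Lambda^c\|_{L^2\to H^1}\leqslant L$, proved with the multiplier $(1-x)\partial_xu$ (Lemma~\ref{hiddenreg});
\item the interpolation $\|A\|_{L^2\to L^2}\lesssim\|A\|^{1/2}_{L^2\to H^{-1}}\|A\|^{1/2}_{L^2\to H^1}$ (Lemma~\ref{interpolateA}).
\end{itemize}
Together they give $\delta=\|\Lambda^{c_1}-\Lambda^{c_2}\|_{L^2\to L^2}\lesssim(\varepsilon\log(1/\varepsilon))^{1/2}$, and this square root is the $1/2$ in $\kappa/2$. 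The estimate of \cite{korpela_lassas_oksanen} is then used directly in $C^2(\overline\Omega)$. The reconstruction $\widehat c$ from $\Lambda^{c_2}$ at noise level $\delta$ lies within $C\delta^\kappa$ of both $c_1$ and $c_2$, and the triangle inequality concludes, with no interpolation.

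Alternatively, your asymptotic idea can be repaired by applying it to every $n$ rather than only to the tail. Write $r^{(j)}_n$ for the $n$-th kernel term minus its leading part $\frac{2}{\tau_j(1)}\sin(\omega^{(j)}_nt)/\omega^{(j)}_n$, where $\omega^{(j)}_n=\pi(n-\frac12)/\tau_j(1)$. Then $|r^{(j)}_n|\lesssim n^{-2}$ and $|r^{(1)}_n-r^{(2)}_n|\lesssim\varepsilon$ on $[0,2T]$, and $\sum_n\min\{\varepsilon,n^{-2}\}\asymp\varepsilon^{1/2}$. The two explicit square-wave kernels are at $L^1$-distance $O(\varepsilon)$, since the data give $|\tau_1(1)-\tau_2(1)|\lesssim\varepsilon$. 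With Young's inequality this also yields $\delta\lesssim\varepsilon^{1/2}$, which suffices. That is, however, a different count from the one you wrote.
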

Recall that the H\"older space $C^{2,\eta}(\overline\Omega)$, $\eta\in(0,1)$, consists of $w\in C^2(\overline\Omega)$ with
$$\|w\|_{C^{2,\eta}(\overline\Omega)}=\|w\|_{C^2(\overline\Omega)}+\sup_{x,y\in\overline\Omega,\,x\neq y}\frac{|w''(x)-w''(y)|}{|x-y|^{\eta}}<+\infty.$$
\begin{Corollary}\label{6mainres}
\textit{For any $c_1,c_2\in\mathcal{C}'$ and $\eta\in[0,1)$,}
$$\|c_1-c_2\|_{C^{2,\eta}(\overline\Omega)}\lesssim_{\theta}
\|\Phi(c_1)-\Phi(c_2)\|^{\theta(1-\eta)}_{\ell^\infty_2\times\ell^\infty_0} ,\quad \theta\in \left(0,\frac\kappa2\right).$$
\end{Corollary}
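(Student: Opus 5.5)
The plan is to interpolate between the $C^2$ Hölder estimate of Theorem \ref{5mainres} and a uniform $C^3$ bound that comes for free from the class $\mathcal{C}'$. Put $w=c_1-c_2$. Since $c_1,c_2\in\mathcal{C}'$, both satisfy $\|c_i\|_{C^3(\R)}\leqslant C_{max}$, so $\|w\|_{C^3(\overline\Omega)}\leqslant 2C_{max}$. In particular $w''$ is Lipschitz on $\overline\Omega$ with constant at most $2C_{max}$. The case $\eta=0$ is exactly Theorem \ref{5mainres}, so I assume $\eta\in(0,1)$.

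The key step is an elementary interpolation for the Hölder seminorm of $w''$. For $x\neq y$ in $\overline\Omega$ I split into two regimes, according to whether $|x-y|$ is below or above a threshold $\delta\in(0,1]$ to be chosen.
\begin{itemize}
\item If $|x-y|\leqslant\delta$, the Lipschitz bound gives
$$\frac{|w''(x)-w''(y)|}{|x-y|^\eta}\leqslant 2C_{max}|x-y|^{1-\eta}\leqslant 2C_{max}\delta^{1-\eta}.$$
\item If $|x-y|>\delta$, I use the sup norm instead:
$$\frac{|w''(x)-w''(y)|}{|x-y|^\eta}\leqslant 2\|w\|_{C^2}\delta^{-\eta}.$$
\end{itemize}
Write $A=\|w\|_{C^2(\overline\Omega)}$. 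Since $A\lesssim 1$, I may choose $\delta=A/(2C_{max})$ (or a comparable multiple), which lies in $(0,1]$ up to constants. With this choice both terms are $\lesssim A^{1-\eta}$, so the seminorm is $\lesssim A^{1-\eta}$. The other term in the norm satisfies $A\lesssim A^{1-\eta}$, again because $A\lesssim 1$. Altogether
$$\|w\|_{C^{2,\eta}(\overline\Omega)}\lesssim\|w\|_{C^2(\overline\Omega)}^{1-\eta}.$$

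Finally I insert Theorem \ref{5mainres}, which gives $\|w\|_{C^2}\lesssim_\theta\varepsilon^\theta$ with $\varepsilon=\|\Phi(c_1)-\Phi(c_2)\|_{\ell^\infty_2\times\ell^\infty_0}$. Raising this to the power $1-\eta$ yields the claimed bound $\varepsilon^{\theta(1-\eta)}$. The implied constants depend only on $C_{min}$, $C_{max}$, $T$, $\theta$, and possibly $\eta$, and are uniform once $\eta$ is fixed.

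There is no serious obstacle. The only points needing care are the case $A=0$, where $w\equiv0$ and the claim is trivial, and making sure the choice of $\delta$ does not exceed the diameter of $\Omega$. For the latter, if $A$ is large relative to $C_{max}$ I use $\delta=1$, and the bound still holds because $A$ is bounded.
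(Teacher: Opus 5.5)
Your proof is correct and is essentially the paper's argument. The paper bounds the increments of $w''$ by $\min\{2A,\,2C_{max}|x-y|\}\leqslant(2A)^{1-\eta}(2C_{max}|x-y|)^{\eta}$, which your threshold split at $\delta=A/(2C_{max})$ reproduces; it then uses $A\leqslant(2C_{max})^{\eta}A^{1-\eta}$ and Theorem~\ref{5mainres}. Two small remarks: your concern about $\delta$ exceeding the diameter is moot, since $A\leqslant 2C_{max}$ forces $\delta\leqslant1$, and the resulting constant $3(2C_{max})^{\eta}$ is in fact independent of $\eta$.
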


\medskip

In applications, the spectral data $\{\lambda_{n}'(0)\}_{n\in\N}$ are not directly available. Instead, for a sufficiently small fixed parameter $\omega>0$ one can measure two finite sets of eigenvalues
$$\{\lambda_n(0)\}_{n=1}^N \quad \text{from } (\mathcal{S}_0),
\qquad
\{\lambda_n(\omega)\}_{n=1}^N \quad \text{from } (\mathcal{S}_\omega),$$
and approximate the derivatives by finite differences:
$$\frac{d\lambda_n}{d\omega}(0) \approx \frac{\lambda_n(\omega) - \lambda_n(0)}{\omega},\qquad n=1,\dots,N.$$
The numerical inverse problem can then be phrased as follows: given the two finite spectra $\{\lambda_n(0)\}_{n=1}^N$  and $\{\lambda_n(\omega)\}_{n=1}^N$ for a small yet fixed $\omega>0$, reconstruct an approximation of the wave speed $c(x)$ on $\Omega$ that is consistent with the reconstruction $\Phi^{-1}(\{\lambda_n(0),\lambda_n'(0)\}_{n\in\N})$. In other words, one seeks a stable, computationally feasible approximation of the inverse spectral map $\Phi^{-1}$ based on truncated and finite-difference–perturbed spectral data.

\medskip

The study of such inverse problems has a long history. The one-dimensional inverse problems for hyperbolic equations were pioneered by A.~S.~Blagove\v{s}\v{c}enski\u{\i}~\cite{blag1,blag2}, who derived integral identity formulations and introduced the operators that now bear his name. The \emph{Boundary Control method} provides a different point of view on
inverse problems. It reformulates them as controllability questions for the
wave equation and has become a conceptual basis for many boundary spectral
and time-domain reconstruction methods. In the multidimensional setting, this
approach was introduced by M.~I.~Belishev~\cite{bel3} and further developed
by M.~I.~Belishev and Y.~V.~Kurylev~\cite{belkur}. In particular, the method
yields uniqueness and reconstruction results for smooth wave speeds from
boundary response data, such as the Neumann-to-Dirichlet operator. Shortly thereafter, M.~I.~Belishev and Y.~V.~Kurylev applied this method to the reconstruction of a Riemannian manifold from its spectral data~\cite{belkur2}. Further developments, including the reconstruction of manifolds and metrics
and related inverse problems such as obstacle recovery, are discussed in
M.~I.~Belishev's review~\cite{bel4}. For a detailed account of subsequent progress in the Boundary Control method, we refer to M.~I.~Belishev's later survey~\cite{belrec}.

A multidimensional Borg--Levinson theorem for Riemann--Schr\"odinger operators with Robin boundary conditions was proven in 2005 in \cite{kurylev_lassas_weder}. Their result shows that local spectral data for varying impedance functions determine the manifold, the potential, and the background impedance, and is closely related to the spectral perturbation data used here. For a classical reference on inverse boundary spectral problems, see the monograph \cite{katchalov_kurylev_lassas}.

Numerical realizations of the boundary control method via minimization and time reversal were pioneered in \cite{oksanen_ip,first}, and a regularization strategy for the \(1+1\)-dimensional wave equation inverse problem was developed in \cite{korpela_lassas_oksanen,KLO1}. The eigenvalue computations needed for our spectral data are naturally carried out using finite element methods as surveyed by D.~Boffi~\cite{boffi}, and the underlying spectral theory relies on the classical work of M.~S.~Birman and M.~Z.~Solomyak~\cite{birmansol}. The boundary control method has also been combined with spectral data in the partial boundary setting in \cite{oksanen_spec}; the authors applied it to a partial data Borg–Levinson inverse spectral problem for Schrödinger operators . 

Within this framework, the present work fits into the development of inverse spectral and boundary control methods in four ways. First, it combines boundary spectral perturbation data with the boundary control method and travel-time geometry. Second, it makes explicit the inverse spectral map $\Phi^{-1}$ from spectral data to the wave speed $c(x)$. Third, it establishes stability of the inverse spectral map: a Lipschitz estimate for $\Phi$ and a H\"older estimate for $\Phi^{-1}$, the latter obtained by combining a hidden-regularity estimate for the Neumann-to-Dirichlet map with the regularization strategy of \cite{korpela_lassas_oksanen}. Fourth, it formulates the associated numerical inverse spectral problem in terms of two finite sets of perturbed eigenvalues, $\{\lambda_n(0)\}_{n\in\N}$ and $\{\lambda_n(\omega)\}_{n\in\N}$ for a sufficiently small $\omega>0$. Furthermore, we construct a numerical implementation of the inverse spectral map~$\Phi^{-1}$. 

\medskip

\subsection{Outline of the paper and further results}\label{potpamr}
In this section we outline the structure of the paper. We also state further results, which are steps in the proofs of the main results and are of independent interest.

In Section~\ref{isp}, we analyze the perturbed spectral problems $(\mathcal{S}_\omega)$ and show differentiability of the eigenvalue curves with respect to $\omega$. In Section \ref{wefpmain}, we prove existence and uniqueness of the strong solutions of the wave equation $(\mathcal{W}_f)$ for an appropriate $f$. Further, for $\lambda>0$ we introduce the convolution-type operator $S_\lambda$,
$$S_{\lambda}[f](t)=\int_0^t \frac{\sin(s\sqrt{\lambda})}{\sqrt{\lambda}}f(t-s)\,ds,\qquad f\in L^1_{\ell oc}(\R_+).$$
This operator is well defined on $L^2(0,T)$. In Subsection \ref{gboep2}, we prove the first result that identifies the Neumann-to-Dirichlet map $\Lambda$ in terms of eigenvalues $\{\lambda_n(0)\}_{n\in\N}$ and their derivatives $\{\lambda_n'(0)\}_{n\in\N}$. 
\begin{Proposition}\label{1mainres}
	\textit{The Neumann-to-Dirichlet map $\Lambda$ is a convolution-type operator and admits the representation}
	\[
	\Lambda[f] = \sum_{n\in \N}\lambda_n'(0)\, S_{\lambda_n(0)}[f] , \qquad f\in L^2(0,T).
	\]
\end{Proposition}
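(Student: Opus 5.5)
The plan is to expand the solution of $(\mathcal{W}_f)$ in the eigenfunctions of $(\mathcal{S})$, to identify the squared boundary values of the eigenfunctions with $-\lambda_n'(0)$ by a Hadamard-type formula, and then to take the trace at $x=0$.

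\textbf{Eigenfunction expansion.} The operator $A=-c^2\partial_x^2$ with the boundary conditions of $(\mathcal{S})$ is self-adjoint and positive in the weighted space $L^2(\Omega;c^{-2}dx)$. It has simple eigenvalues $\lambda_n=\lambda_n(0)>0$ and a complete orthonormal system of real eigenfunctions $\phi_n$ with $\int_\Omega c^{-2}\phi_n^2\,dx=1$.

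First take $f$ smooth and compactly supported in $(0,T)$. Then the strong solution from Section~\ref{wefpmain} is regular enough to set
$$a_n(t)=\int_\Omega c^{-2}u(t,x)\phi_n(x)\,dx$$
and differentiate under the integral. Using the equation and integrating by parts twice gives
\[
a_n''(t)=\int_\Omega u_{xx}\phi_n\,dx=\big[u_x\phi_n-u\phi_n'\big]_0^1+\int_\Omega u\,\phi_n''\,dx=-f(t)\phi_n(0)-\lambda_n a_n(t).
\]
Here $u(t,1)=\phi_n(1)=0$, $\phi_n'(0)=0$ and $u_x(t,0)=f(t)$. Since $a_n(0)=a_n'(0)=0$, Duhamel's formula gives $a_n=-\phi_n(0)\,S_{\lambda_n}[f]$. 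Hence
$$u(t,\cdot)=-\sum_n \phi_n(0)\,S_{\lambda_n}[f](t)\,\phi_n.$$

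\textbf{Identification of $\phi_n(0)^2$.} Next I show that $\lambda_n'(0)=-\phi_n(0)^2$, using the differentiability of the eigenvalue curves from Section~\ref{isp}. The quadratic form of $(\mathcal{S}_\omega)$ on $\{\phi\in H^1(\Omega):\phi(1)=0\}$ with weight $c^{-2}$ is
$$q_\omega(\phi)=\int_\Omega|\phi'|^2\,dx-\omega\,\phi(0)^2,$$
obtained from $-\int\phi''\phi=\phi'(0)\phi(0)+\int|\phi'|^2$ and the Robin condition. Two routes give the derivative:
\begin{itemize}
\item Differentiate the Rayleigh quotient at the simple eigenvalue $\lambda_n(0)$ (Hellmann--Feynman).
\item Alternatively, multiply the equations for $\phi_n^{\omega}$ and $\phi_n^{0}$ crosswise, integrate, divide by $\omega$, and let $\omega\to0$.
\end{itemize}
Either way the result is $\lambda_n'(0)=-\phi_n(0)^2$.

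\textbf{Trace at $x=0$ and extension to $L^2$.} Formally, taking the trace gives
$$\Lambda[f]=u(\cdot,0)=-\sum_n\phi_n(0)^2S_{\lambda_n}[f]=\sum_n\lambda_n'(0)S_{\lambda_n(0)}[f].$$
This formal step is where I expect the main obstacle. The series converges in $L^2(\Omega;c^{-2}dx)$ for each $t$, but pointwise evaluation at $x=0$ is not justified by $L^2$ convergence.

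To handle it, I will use the fact that for smooth compactly supported $f$ the solution satisfies $u(t,\cdot)\in D(A^{1/2})\subset H^1(\Omega)$, uniformly in $t$. Indeed, $A^{1/2}u$ has coefficients $\sqrt{\lambda_n}\,a_n$, and integrating $S_{\lambda_n}$ by parts once gives $|a_n|\lesssim|\phi_n(0)|\,\|f'\|_{L^1}/\lambda_n$. Combined with the Weyl asymptotics $\lambda_n\asymp n^2$ and the uniform bound $|\phi_n(0)|\lesssim1$, this makes the series converge in $H^1(\Omega)$, uniformly in $t$. The trace is then continuous by the Sobolev embedding, and the identity holds for smooth $f$.

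Finally, I extend to all $f\in L^2(0,T)$ by density. The left-hand side is continuous because $\Lambda$ is bounded on $L^2$ by the hidden-regularity estimate. The right-hand side series converges in $L^2(0,T)$: each term is a convolution with $\sin(s\sqrt{\lambda_n})/\sqrt{\lambda_n}$, and these kernels are almost orthogonal because the frequencies $\sqrt{\lambda_n}$ are separated, so the series can be controlled by a Bessel/Ingham-type inequality. This identifies the series as a convolution operator with kernel $\sum_n\lambda_n'(0)\sin(s\sqrt{\lambda_n})/\sqrt{\lambda_n}$, understood in $L^2_{\ell oc}$.
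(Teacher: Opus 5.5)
Your proposal is correct and follows the paper's proof:
\begin{itemize}
\item expand $u$ in the eigenfunctions of $(\mathcal{S})$;
\item solve the coefficient ODE by Duhamel's formula to get $-\phi_n(0)S_{\lambda_n(0)}[f]$;
\item replace $\phi_n(0)^2$ by $-\lambda_n'(0)$ as in Corollary~\ref{gbeig0};
\item pass the trace at $x=0$ through the sum;
\item extend from a dense class of controls using the boundedness of $\Lambda$.
\end{itemize}

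Your justifications of the two limiting steps are somewhat stronger than the paper's. For the trace, you get strong $H^1$ convergence, uniform in $t$, from the coefficient decay $|a_n(t)|\lesssim\lambda_n^{-1}$; the paper uses weak $H^1$ convergence from a uniform energy bound. For the extension, you show the series itself converges in $L^2(0,T)$ for every $f\in L^2(0,T)$, with an $L^2_{\ell oc}$ kernel; the paper only identifies $\Lambda$ with the continuous extension of the series operator from $H^1_\diamond(0,T)$.

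Your Ingham/Bessel step, however, requires the uniform gap $\inf_n\bigl(\sqrt{\lambda_{n+1}}-\sqrt{\lambda_n}\bigr)>0$. This does not follow from $\lambda_n\asymp n^2$, so you should quote it from the classical asymptotics $\sqrt{\lambda_n}=\pi(n-\tfrac12)/\tau(1)+O(n^{-1})$ together with the simplicity of the eigenvalues.
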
 

The function $d$ denotes the travel-time distance generated by the wave speed~$c(x)$. More precisely, the time needed for a wave to travel from $0$ to a point $x\in\overline{\Omega}$~is
\[
\tau(x)=d(0,x)=\int_0^x \frac{dy}{c(y)}.
\]
Equivalently, the wave speed $c(x)$ induces the metric element $ds=c(x)^{-1}dx$. For $r\in[0,\tau(1)]$, the set $\Omega(r)=\{x\in\overline{\Omega}:d(0,x)\leqslant r\}$ is called the domain of influence. In other words, $\Omega(r)$ consists exactly of the points that can be reached by a signal emitted from the boundary point $x=0$ within time~$r$. Furthermore, we fix a final time $T\geqslant\tau(1)$, which guarantees that a signal emitted at $x=0$ reaches every point of $\Omega$ within time $T$.

In Section~\ref{premip}, we introduce the Blagove\v{s}\v{c}enski\u{\i} operator $K$, expressed in terms of the Neumann-to-Dirichlet map $\Lambda$. This operator is used to recover the indicator functions of the domains of influence $\Omega(r)$. Before stating the corresponding result, we collect the necessary notations.

We use the weighted space
$$L^2_c= L^2\bigl(\Omega, c^{-2}(x)\,dx\bigr)$$
consisting of equivalence classes of measurable functions $v:\Omega\to\R$ such that $\int_\Omega |v(x)|^2c^{-2}(x)\,dx < +\infty$. Its inner product and norm are $(v,w)_{L^2_c} := \int_\Omega v(x)\,w(x)c^{-2}(x)\,dx$ and $\|v\|_{L^2_c} := (v,v)_{L^2_c}^{1/2}$. Since $c(x)$ is uniformly bounded from above and bounded away from zero on $\Omega$, the norm $\|\cdot\|_{L^2_c}$ is equivalent to the usual $L^2(\Omega)$ norm. Hence $L^2_c$ coincides with $L^2(\Omega)$ as a set, but is equipped with the weighted inner product above.

We also introduce spaces of boundary controls localized in time. We set
\[
H(r)=\{f\in H^1(0,2T):\mathrm{supp}f\subset [T-r,T]\},\qquad r\in [0,T].
\]
The $H^1(0,2T)$ seminorm is given by $|f|_{H^1(0,2T)} = \|\partial_t f\|_{L^2(0,2T)}$. There exists a unique bounded and symmetric linear operator $S:H(r)\to (H(r))^*$ satisfying
\begin{equation*}\label{stiff0}
	\langle S f, h\rangle =(\partial_t f,\partial_t h)_{L^2(0,2T)},\qquad f,h\in H(r),
\end{equation*}
and we call it the stiffness operator. The orthogonal projector \(\mathcal{P}_r:L^2(0,2T)\to L^2(0,2T)\) is defined by $\mathcal{P}_r[f]=\mathds{1}_{[T-r,T]}f$, where \(\mathds{1}_{[T-r,T]}\) is the indicator function of \([T-r,T]\). Finally, given a Neumann boundary input $f$, let $u^f$ denote the solution of $(\mathcal{W}_f)$. We write $u^f(T)=u^f(T,\cdot)$ for the spatial profile of this solution at time $T$.

\begin{Proposition}\label{2mainres}
	\textit{Let $0\leqslant r\leqslant \tau(1)$ and $\alpha,\beta >0$. Then the regularized optimization problem
		\[
		\min_{f\in H(r)}\left(\|u^f(T)-\mathds{1}_{\Omega(r)}\|^2_{L^2_c}+\alpha\|f\|^2_{L^2(0,2T)}+\beta|f|^2_{H^1(0,2T)}\right)
		\]
		has a unique minimizer 
		$$f_{\alpha,\beta,r} = (\mathcal{P}_rK\mathcal{P}_r+\alpha I+ \beta S)^{-1}\mathcal{P}_rB1.$$
		Moreover, the map $r\mapsto f_{\alpha,\beta,r}$ belongs to $C([0,  \tau(1)];L^2(0,2T))$, and as $\alpha\to +0$ and $\beta\to +0$ one has uniform convergence
		\[
		\lim_{\alpha,\beta\to +0}\,\sup_{r\in [0,  \tau(1)]}\left\|u^{f_{\alpha,\beta,r}}(T)- \mathds{1}_{\Omega(r)}\right\|_{L^2_c} = 0.
		\]}
\end{Proposition}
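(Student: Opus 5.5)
The plan has three parts. First I will rewrite the objective as a quadratic form in $f$, which gives existence, uniqueness and the formula. Then I will prove continuity in $r$ from an operator-norm bound. Finally, the uniform convergence will follow from pointwise convergence plus monotonicity in $r$ and continuity of the limit.

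\textbf{Step 1: quadratic form.} I will use the Blagove\v{s}\v{c}enski\u{\i} identities from Section~\ref{premip}:
\[
(u^f(T),u^h(T))_{L^2_c}=(Kf,h)_{L^2(0,2T)}, \qquad (u^f(T),1)_{L^2_c}=(f,B1)_{L^2(0,2T)}.
\]
By finite speed of propagation, $u^f(T)$ is supported in $\Omega(r)$ for $f\in H(r)$, so
\[
(u^f(T),\mathds 1_{\Omega(r)})_{L^2_c}=(u^f(T),1)_{L^2_c}.
\]
Hence, for $f\in H(r)$, writing $f=\mathcal P_r f$,
\[
J(f)=(\mathcal P_rK\mathcal P_r f,f)-2(f,\mathcal P_rB1)+\alpha\|f\|^2+\beta\langle Sf,f\rangle+\|\mathds 1_{\Omega(r)}\|^2_{L^2_c}.
\]
The operator $K$ is nonnegative, so the form $A_r=\mathcal P_rK\mathcal P_r+\alpha I+\beta S$ is coercive on $H(r)$ with constant $\min(\alpha,\beta)$ in the $H^1$ norm. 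Lax--Milgram then gives a unique minimizer, characterized by $A_rf=\mathcal P_rB1$ in $H(r)^*$. This is exactly the stated formula, with the inverse understood from $H(r)^*$ to $H(r)$.

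\textbf{Step 2: continuity in $r$.} The spaces $H(r)$ vary with $r$, so I will not compare them directly. Instead I will compare minimizers through the minimization itself, using the variational characterization.

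For $r'<r$, take $f=f_{\alpha,\beta,r}$ and build a competitor for $r'$ by translating $f$ in time toward $T$ and truncating it with a cut-off. The translated and truncated function lies in $H(r')$, and its distance to $f$ in $H^1$ is small as $r'\to r$. The values of $J$ change continuously, because $\|\mathds 1_{\Omega(r)}-\mathds 1_{\Omega(r')}\|_{L^2_c}\to0$ when $\tau$ is strictly increasing. The same argument works in reverse for $r'>r$.

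Strong convexity of $J$ then gives
\[
\|f_r-f_{r'}\|^2_{H^1}\lesssim_{\alpha,\beta}|J_r(\text{competitor})-\min J_r|\to0,
\]
which yields continuity in $C([0,\tau(1)];L^2)$.

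\textbf{Step 3: uniform convergence as $\alpha,\beta\to0$.} This step is the main obstacle. Pointwise convergence follows from the density of $\{u^f(T):f\in H(r)\}$ in $L^2(\Omega(r))$, which is approximate controllability via Tataru's unique continuation, or in 1D via the explicit Goursat construction. Given $\epsilon>0$, pick $h\in H(r)$ with $\|u^h(T)-\mathds 1_{\Omega(r)}\|<\epsilon$. Comparing with the minimizer gives
\[
\|u^{f_{\alpha,\beta,r}}(T)-\mathds 1_{\Omega(r)}\|^2\le\epsilon^2+\alpha\|h\|^2+\beta|h|^2_{H^1}.
\]

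To make this uniform, I will use a compactness argument.
\begin{itemize}
\item Let $E_{\alpha,\beta}(r)$ denote the minimal value of $J$. It is monotone in $\alpha,\beta$, continuous in $r$ by Step~2, and tends to $0$ pointwise.
\item Dini's theorem would then give uniform convergence. Since $(\alpha,\beta)$ is two-parameter, I will apply it to $\gamma=\max(\alpha,\beta)$: indeed $E_{\alpha,\beta}\le E_{\gamma,\gamma}$.
\item The residual satisfies $\|u^{f}(T)-\mathds 1_{\Omega(r)}\|^2\le E_{\alpha,\beta}(r)$.
\end{itemize}

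If continuity of $E$ in $r$ fails near endpoints, I will instead use a finite $\delta$-net $\{r_j\}$ together with the translation competitor from Step~2. Shifting $h_{r_j}$ by $r-r_j$ costs $\|\mathds 1_{\Omega(r)}-\mathds 1_{\Omega(r_j)}\|\lesssim\sqrt{|r-r_j|}$ in the residual, with no change in the $L^2$ or $H^1$ norms. Since the net has finitely many controls, this gives a uniform bound.
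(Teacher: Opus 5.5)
Your architecture matches the paper's: the quadratic form via the Blagove\v{s}\v{c}enski\u{\i} identities and Lax--Milgram, competitors for continuity in $r$, and Dini along $\alpha=\beta=\gamma\downarrow0$. The problem is that Step~2 as written proves only left-continuity of $r\mapsto f_r:=f_{\alpha,\beta,r}$.

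Put $G(f)=(f,Kf)_{L^2(0,2T)}-2(f,B1)_{L^2(0,2T)}+\|f\|^2_{\alpha,\beta}$. On $H(r)$ your objective is $G+\|\mathds 1_{\Omega(r)}\|^2_{L^2_c}$, so $f_r$ minimizes the $r$-independent $G$ over $H(r)$. Hence $G(f)-G(f_r)\geqslant\min\{\alpha,\beta\}\|f-f_r\|^2_{H^1(0,2T)}$ for every $f\in H(r)$.

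For $r_k\uparrow r$, this inequality and your competitor $\tilde f_k\in H(r_k)$, built from the \emph{fixed} function $f_r$, give $\min\{\alpha,\beta\}\|f_{r_k}-f_r\|^2_{H^1(0,2T)}\leqslant G(\tilde f_k)-G(f_r)\to0$. For $r_k\downarrow r$ the roles swap. Now $f_r\in H(r_k)$, so $\min\{\alpha,\beta\}\|f_r-f_{r_k}\|^2_{H^1(0,2T)}\leqslant G(f_r)-G(f_{r_k})$, and you need $\liminf_kG(f_{r_k})\geqslant G(f_r)$.

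``The same argument in reverse'' would require competitors in $H(r)$ built from the \emph{varying} minimizers $f_{r_k}$ and close to them uniformly in $k$. Translation plus cut-off does not provide this. With $s=r_k-r$, the cut-off can change the $H^1$ seminorm by an amount comparable to $\|\partial_tf_{r_k}\|_{L^2(T-2s,T)}$. That quantity is not known to be small uniformly in $k$ for a family that is merely bounded in $H^1(0,2T)$.

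The missing ingredient is the lower-semicontinuity step the paper uses:
\begin{itemize}
\item The family is bounded, since comparing with the zero control gives $\|f_{r_k}\|^2_{\alpha,\beta}\leqslant\|\mathds 1_{\Omega(r_k)}\|^2_{L^2_c}$.
\item A weak limit $\bar f$ of a subsequence lies in $H(r)$, because the supports shrink to $[T-r,T]$.
\item $G$ is convex (as $K\geqslant0$) and continuous, hence weakly lower semicontinuous, so $G(f_r)\leqslant G(\bar f)\leqslant\liminf_kG(f_{r_k})$.
\end{itemize}
Your strong-convexity inequality then finishes the argument.

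Alternatively, use dilation instead of translation. The map $g(t)=h\bigl(T-\tfrac{r_k}{r}(T-t)\bigr)$ on $[T-r,T]$, with $g=0$ elsewhere, multiplies $\|h\|^2_{L^2(0,2T)}$ and $|h|^2_{H^1(0,2T)}$ by exactly $r/r_k$ and $r_k/r$. It moves $h$ by at most $T\bigl(\tfrac{r_k-r}{2r}\bigr)^{1/2}|h|_{H^1(0,2T)}$ in $L^2(0,2T)$, so $G(g)-G(h)\to0$ uniformly on bounded sets. Since continuity of $r\mapsto f_{\alpha,\beta,r}$ is itself part of the statement, this must be repaired regardless of how Step~3 is handled.

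A smaller point on your fallback in Step~3. Shifting a control in time by $s$ replaces $u^h(T)$ by $u^h(T\pm s)$, so the residual also picks up $\|u^h(T\pm s)-u^h(T)\|_{L^2_c}$, which your cost estimate omits. No shift is needed, though. For a net point $r_j\leqslant r$, the control $h_j\in H(r_j)\subset H(r)$ is admissible unchanged and costs exactly $\|\mathds 1_{\Omega(r)}-\mathds 1_{\Omega(r_j)}\|_{L^2_c}$. So repaired, the net argument is a valid alternative to Dini for the uniform convergence, and it does not use continuity in $r$ at all.
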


Proposition~\ref{2mainres} refines Theorem~4 of \cite{korpela_lassas_oksanen} in two respects. The regularization combines an $L^2$ penalty with an $H^1$ seminorm penalty, which makes the minimization well posed in $H(r)$ and yields controls of higher regularity. More importantly, the convergence of $u^{f_{\alpha,\beta,r}}(T)$ to $\mathds{1}_{\Omega(r)}$ is uniform in $r\in[0,\tau(1)]$, whereas the pointwise convergence in $r$ obtained in \cite{korpela_lassas_oksanen} would in principle require regularization parameters depending on $r$. The uniformity, whose proof relies on the continuity of $r\mapsto f_{\alpha,\beta,r}$, means that one pair $(\alpha,\beta)$ serves for the whole family of domains of influence. This is what makes the numerical reconstruction of Section~\ref{compimpl} practical: the travel-time volume $V(r)$ is computed on a grid of values of $r$ with the regularization parameters fixed once and for all, rather than tuned separately for each $r$.

The two propositions above are the ingredients of the first of our main results, Theorem~\ref{3mainres}, which asserts the bijectivity of the forward spectral map $\Phi:\mathcal{C}\to\mathcal{D}$ and thereby the unique solvability of the inverse spectral problem.

Section~\ref{stabsec} is devoted to the stability of the spectral correspondence, where the remaining main results are proved. In the forward direction, Proposition~\ref{4mainres} states that $\Phi$ is Lipschitz continuous; in the inverse direction, Theorem~\ref{5mainres} and Corollary~\ref{6mainres} state that $\Phi^{-1}$ is H\"older continuous.

Finally, in Section~\ref{compimpl} we discuss the computational implementation. A finite element discretization of the forward problem produces the spectral data $\{\lambda_n(0)\}_{n=1}^L$ and $\{\lambda_n(\omega)\}_{n=1}^L$ for a prescribed wave speed $c(x)$ and a fixed $\omega\in(0,1)$. From these data we estimate the travel time $\tau(1)$, which fixes the observation time $T$, and build a truncated Neumann-to-Dirichlet map. The boundary control method then yields the volumes of the domains of influence, from which $c(x)$ is recovered. In Section~\ref{numres} we present numerical experiments. We verify, with an independent finite difference solver, that the computed boundary sources produce waves close to the indicators of the domains of influence. We then reconstruct smooth and piecewise constant wave speeds and quantify the errors. Finally, we study the sensitivity of the reconstruction to noise in the spectral data.

\medskip

\section{Impedance Spectral problem}\label{isp}
\subsection{Solutions to the impedance spectral problem}\label{sttisp}
Consider the spectral Sturm-Liouville problem with a fixed Robin parameter $\omega\in [0,1)$:
$$(\mathcal{S}_\omega)\; : \; \begin{cases}
	- c^2\, \partial_x^2 \phi = \lambda \phi , & \\
	\partial_x \phi(0)+\omega\phi(0) =0 , & \\
	\phi(1)=0, &
\end{cases}\text{ in }\Omega. $$
In this section, we simply write $\lambda$ omitting argument $\omega$ and we characterize all pairs $(\lambda,\phi)\in \R\times L^2(\Omega)$ satisfying $(\mathcal{S}_\omega)$ in the weak sense, as defined below. When we need to refer to the forward Sturm-Liouville problem (given $\lambda\in\R$, find $\phi$), we denote the problem by $(\mathcal{S}_{\omega,\lambda})$ and call it forward impedance spectral problem. 

\medskip

To begin with, we define the vector subspace of $H^1(\Omega)$ with the linear constraint $v(1)=0$ by
$$V=\{v\in H^1(\Omega): v(1)=0\}.$$
The subspace $V\subset H^1(\Omega)$ is a Hilbert space with respect to the inherited $H^1(\Omega)$-inner product.

\begin{definition}\label{defweak}
	We define the bilinear form $B\,:\, V\times V\to \mathbb{R}$ by
	$$B(\phi,v)=-\omega \phi(0) \overline{v(0)}+\int_\Omega \partial_x \phi\, \overline{\partial_xv}\,dx ,\qquad \phi,v\in V.$$
	We say that $\phi\in V$ is a weak solution to the problem $(\mathcal{S}_{\omega,\lambda})$ given $\lambda\in\R$ if and only if the following integral identity holds:
	$$ B(\phi,v)=\lambda( \phi, v)_{L^2_c},\qquad v\in V. $$
\end{definition}

The bilinear form $B\,:\, V\times V\to \mathbb{R}$ is symmetric, bounded and coercive (elliptic) for $\omega\in [0,1)$. By Lax-Milgram theorem, there is the linear operator $A\,:\, L^2(\Omega)\to V$ that maps $f\mapsto \phi$, where $\phi\in V$ is the unique solution to the problem $B(\phi, v) = (f, v)_{L^2_c}$, $v\in V$. In other words,
$$B(A[f], v) = ( f, v)_{L^2_c},\qquad v\in V, f\in L^2(\Omega),$$
Moreover, $A$ as an $L^2_c\to L^2_c$ operator is injective, self-adjoint, positive-semidefinite, and compact.

The spectral theorem applied to the infinite rank operator $A\in \mathscr{B}(L^2_c)$, the Banach algebra of bounded linear operators on $L^2_c$, yields the eigenfunction expansion of $A$. This result is also known as the Hilbert-Schmidt theorem, see e.g., Section~16.6 in~\cite{fitzpatrick} or Theorem~8.94 in~\cite{renardy}. Clearly, positive-semidefiniteness of $A$ entails that all eigenvalues of $A$ are real and non-negative. Upon combining these we obtain the following proposition.

\begin{theorem*}[The Hilbert-Schmidt theorem]\label{spectralth}
	\textit{The spectrum $\sigma(A)$ is countable (discrete), compact, real, positive. Moreover, $0$ belongs to $\sigma(A)$ and the point spectrum $\sigma_p(A)=\sigma(A)\setminus\{0\}$ consists only of eigenvalues $\{\lambda_n\}_{n\in\N}$ of $A$ satisfying $\lambda_1\geqslant \lambda_2 \geqslant \lambda_3\geqslant \dots > 0$ and $\lambda_n\to 0$ as $n\to +\infty$. There is an orthonormal basis for $L^2_c$ of corresponding eigenfunctions $\{\phi_n\}_{n\in\N}\subset V$ such that
		$$A = \sum\limits_{n=1}^{\infty} \lambda_n (\cdot ,\phi_n )_{L^2_c}\, \phi_n .$$
		The series on the right hand side converges in the operator norm of $\mathscr{B}(L^2_c)$.}
\end{theorem*}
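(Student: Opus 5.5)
The plan is to derive this statement from the abstract spectral theorem for compact self-adjoint operators, after checking the hypotheses on $A$ claimed just before it. First I would verify that $B$ is bounded and coercive on $V$ for $\omega\in[0,1)$. Boundedness follows from the trace inequality $|v(0)|\lesssim\|v\|_{H^1(\Omega)}$. For coercivity, the condition $v(1)=0$ gives $v(0)=-\int_0^1\partial_x v\,dx$, hence $|v(0)|^2\leqslant\|\partial_x v\|^2_{L^2(\Omega)}$. Therefore
\[
B(v,v)=\|\partial_x v\|^2_{L^2(\Omega)}-\omega|v(0)|^2\geqslant(1-\omega)\|\partial_x v\|^2_{L^2(\Omega)},
\]
and by the Poincaré inequality on $V$ this controls $\|v\|^2_{H^1(\Omega)}$. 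Lax--Milgram then gives a bounded operator $A:L^2_c\to V$ with $\|A f\|_{H^1}\lesssim\|f\|_{L^2_c}$.

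Next I would establish the properties of $A$ on $L^2_c$. Symmetry of $B$ gives
\[
(Af,g)_{L^2_c}=B(Ag,Af)=B(Af,Ag)=(f,Ag)_{L^2_c},
\]
so $A$ is self-adjoint. Coercivity gives $(Af,f)_{L^2_c}=B(Af,Af)\geqslant0$, with equality only if $Af=0$, which forces $(f,v)_{L^2_c}=0$ for all $v\in V$. Since $V$ is dense in $L^2(\Omega)$ and the weight $c^{-2}$ is bounded above and below, this gives $f=0$. So $A$ is injective and in fact positive definite. Compactness follows by factoring $A$ as the bounded map $L^2_c\to V\subset H^1(\Omega)$ followed by the compact Rellich embedding $H^1(\Omega)\hookrightarrow L^2(\Omega)$, with the norms of $L^2(\Omega)$ and $L^2_c$ equivalent.

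With these properties, the Hilbert--Schmidt theorem for compact self-adjoint operators (Section 16.6 of \cite{fitzpatrick}) gives the rest.
\begin{itemize}
\item The spectrum of $A$ is countable, real and compact, and its only possible accumulation point is $0$.
\item Every nonzero spectral value is an eigenvalue of finite multiplicity.
\item There is an orthonormal basis of $\overline{\operatorname{ran}A}\oplus\ker A$ consisting of eigenfunctions.
\end{itemize}
Injectivity gives $\ker A=\{0\}$, so the eigenfunctions span $L^2_c$. Positivity shows every eigenvalue is strictly positive. Since $L^2_c$ is infinite dimensional and $A$ is injective, there are infinitely many eigenvalues, which we list with multiplicity and decreasing to $0$. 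Then $0$ is an accumulation point and lies in the closed set $\sigma(A)$; it is not an eigenvalue, so $\sigma_p(A)=\sigma(A)\setminus\{0\}$. The eigenfunctions lie in $V$ because $\phi_n=\lambda_n^{-1}A\phi_n\in\operatorname{ran}A\subset V$.

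Norm convergence of the expansion follows from the tail estimate
\[
\Bigl\|A-\sum_{n\leqslant N}\lambda_n(\cdot,\phi_n)_{L^2_c}\phi_n\Bigr\|=\sup_{n>N}\lambda_n=\lambda_{N+1}\to0 .
\]
The only step needing real care is coercivity for $\omega$ close to $1$, where the sharp bound $|v(0)|^2\leqslant\|\partial_xv\|^2$ is exactly what makes $\omega<1$ sufficient. The remaining steps are standard.
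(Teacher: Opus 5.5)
Your proposal is correct and follows the same route as the paper: use Lax--Milgram to show that $A$ is a bounded, self-adjoint, injective, positive and compact operator on $L^2_c$, then invoke the Hilbert--Schmidt theorem for compact self-adjoint operators. The paper only asserts these properties, so your coercivity estimate $B(v,v)\geqslant(1-\omega)\|\partial_x v\|^2_{L^2(\Omega)}$ from $|v(0)|^2\leqslant\|\partial_x v\|^2_{L^2(\Omega)}$, the density argument for injectivity, the infinitude of eigenvalues, and the tail bound $\lambda_{N+1}$ for operator-norm convergence supply details the paper leaves implicit.
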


Now, we define a bounded linear operator $P\,:\, V\to V^*$ by setting
$$\langle P[\phi] ,v\rangle =B(\phi, v),\qquad \phi,v\in V.$$
Remark that $A$ is essentially an inverse of $P$, since $\langle PA[f], v\rangle = ( f, v)_{L^2_c}$ for any $v\in V$, $f\in L^2(\Omega)$. Furthermore, the operator $P$ is a topological isomorphism due to the boundedness and coercivity of $B$, namely, there are $\alpha_1,\alpha_2>0$,
$$\alpha_1\|\phi\|_{H^1(\Omega)}\leqslant \|P[\phi]\|_{V^*}\leqslant \alpha_2\|\phi\|_{H^1(\Omega)},\qquad\phi\in V.$$

\begin{definition}
	The spectrum of the operator $P$ is defined by
	$$\sigma(P) = \C\setminus \{\lambda\in\C\mid P-\lambda I\,:\, V\to V^*\text{ is a topological isomorphism}\}.$$
	A spectrum element $\lambda\in \sigma(P)$ is called an \textit{eigenvalue of $P$} if the forward problem $(\mathcal{S}_{\omega,\lambda})$ has a nontrivial weak solution. In this case, we say that this solution is an eigenfunction corresponding to $\lambda$. The set of all eigenvalues of $P$ is called point spectrum of $P$ and denoted by $\sigma_{p}(P)$.		
\end{definition}

Next result is an application of a well-known fact about eigenvalues of symmetric elliptic operators, see Theorem~1 in Section~6.5 in~\cite{davies}.

\begin{Proposition}\label{eigveigf}
	\textit{The point spectrum of $P$ consists of reciprocals of eigenvalues of $A$:
		$$\sigma_p(P)=\{\lambda >0 : \lambda^{-1}\in\sigma_p(A)\}.$$
		Moreover, for any $\lambda\in\sigma_p(P)$, the set of all weak solutions to $(\mathcal{S}_{\omega,\lambda})$ is exactly the eigenspace $\mathrm{Ker}(A-\lambda^{-1}I)\subset V$. }
\end{Proposition}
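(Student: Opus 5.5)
The argument is a direct translation between the weak eigenvalue problem for $P$ and the eigenvalue problem for the solution operator $A$, using only the defining identity $B(A[f],v)=(f,v)_{L^2_c}$ for $v\in V$, $f\in L^2(\Omega)$, the injectivity of $A$, and the Hilbert--Schmidt theorem above. I would prove the two inclusions of $\sigma_p(P)=\{\lambda>0:\lambda^{-1}\in\sigma_p(A)\}$ and, along the way, identify the solution sets.

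First, let $\lambda\in\sigma_p(P)$ with a nontrivial weak solution $\phi\in V$, so that $B(\phi,v)=\lambda(\phi,v)_{L^2_c}$ for all $v\in V$. I would show $\lambda\neq0$ and in fact $\lambda>0$. Testing with $v=\phi$ gives
\[
B(\phi,\phi)=\lambda\|\phi\|_{L^2_c}^2 .
\]
Coercivity of $B$ for $\omega\in[0,1)$ gives $B(\phi,\phi)\geqslant c_0\|\phi\|_{H^1}^2>0$, and hence $\lambda>0$. To see why coercivity holds, use $\phi(1)=0$ to write $|\phi(0)|^2\leqslant\|\partial_x\phi\|_{L^2}^2$ by Cauchy--Schwarz, so that $B(\phi,\phi)\geqslant(1-\omega)\|\partial_x\phi\|^2$, and then apply Poincaré. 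Next, the function $\lambda\phi\in L^2(\Omega)$ satisfies $B(A[\lambda\phi],v)=(\lambda\phi,v)_{L^2_c}=B(\phi,v)$ for all $v\in V$. Uniqueness in Lax--Milgram then yields $A[\lambda\phi]=\phi$, that is, $A\phi=\lambda^{-1}\phi$ with $\phi\neq0$. Therefore $\lambda^{-1}\in\sigma_p(A)$ and $\phi\in\mathrm{Ker}(A-\lambda^{-1}I)$.

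Conversely, suppose $\mu=\lambda^{-1}\in\sigma_p(A)$, which is positive by the Hilbert--Schmidt theorem, and let $\phi\in\mathrm{Ker}(A-\mu I)$. Then $\phi=\mu^{-1}A\phi=\lambda A\phi\in V$, since $A$ maps into $V$. Moreover,
\[
B(\phi,v)=\lambda B(A\phi,v)=\lambda(\phi,v)_{L^2_c}\quad\text{for all } v\in V,
\]
so $\phi$ is a weak solution of $(\mathcal{S}_{\omega,\lambda})$. Choosing $\phi\neq0$ shows that $\lambda\in\sigma_p(P)$, provided $\lambda$ also lies in $\sigma(P)$.

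That last point follows because $P-\lambda I$ has a nontrivial kernel and hence is not an isomorphism. Here I interpret $\lambda I$ as the map $\phi\mapsto\lambda(\phi,\cdot)_{L^2_c}$ from $V$ to $V^*$, which is the natural reading given that the eigenvalue equation is posed in $L^2_c$. Combining both directions gives the equality of sets and shows that the weak solution set is exactly $\mathrm{Ker}(A-\lambda^{-1}I)$, with the zero function included on both sides.

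No step is a genuine obstacle. The only points needing care are making the coercivity bound explicit (in particular handling the sign of the $-\omega\phi(0)^2$ term) and interpreting the identity operator consistently as the embedding $V\to V^*$ through the $L^2_c$ pairing.
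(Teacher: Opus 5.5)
Your proposal is correct and follows essentially the paper's route: the weak eigenproblem is translated into $A\phi=\lambda^{-1}\phi$ through the identity $B(A[f],v)=(f,v)_{L^2_c}$ together with Lax--Milgram uniqueness, and the steps are reversed for the converse (the paper obtains the uniqueness by testing coercivity on $\phi-\lambda A[\phi]$). Your direct proof that $\lambda>0$ by testing with $v=\phi$, your explicit coercivity bound $B(\phi,\phi)\geqslant(1-\omega)\|\partial_x\phi\|^2_{L^2(\Omega)}$, and your check that $\lambda\in\sigma(P)$ fill in details that the paper leaves implicit.
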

\begin{proof}
	If $\lambda = 0$, then $P-\lambda I = P$ is a topological isomorphism, so $0\not\in\sigma(P)$.
	
	Suppose $\lambda\ne 0$. Let $\phi\in V\setminus\{0\}$ be a nontrivial weak solution to $(\mathcal{S}_{\omega,\lambda})$, that is, $B(\phi,v) =\lambda ( \phi,v )_{L^2_c}$ for any $v\in V$. Then, we get
	$$ B(\phi,v) =\lambda ( \phi,v )_{L^2_c} = \lambda B(A[\phi],v) ,\qquad v\in V.$$
	It entails that $B(\phi-\lambda A[\phi],v) = 0$ for any $v\in V$. Fix $v=\phi-\lambda A[\phi]$, then the coercivity estimate implies $0=B(v,v)\geqslant C\|v\|_{H^1(\Omega)}^2$, which, in turn, gives   
	\begin{equation*}\label{aphi_eig}
		A[\phi]=\frac{1}{\lambda}\phi.
	\end{equation*}
	
	For other direction, pick an eigenfunction $\phi\in V\setminus\{0\}$ of $A$ corresponding to an eigenvalue $\lambda\ne 0$ and repeat the steps above backwards. Thus, a function $\phi\in V\setminus\{0\}$ is a weak solution to $(\mathcal{S}_{\omega,\lambda})$ if and only if it is an eigenfunction of $A$ corresponding to the eigenvalue $\lambda^{-1}$.
	
	Now, pick $\lambda\in\sigma_p(P)$, so $\lambda^{-1}\in\sigma_p(A)$. Then, clearly, $\mathrm{Ker}(A-\lambda^{-1}I)\ne\{0\}$. This implies that any $\phi\in\mathrm{Ker}(A-\lambda^{-1}I)$ is a weak solution to $(\mathcal{S}_{\omega,\lambda})$.
\end{proof}

Consider the Gelfand triple $V\hookrightarrow L^2_c \hookrightarrow V^*$. Note that $V$ is reflexive, since $H^1(\Omega)$ is reflexive and every closed subspace of a reflexive space is reflexive. Of course, $V$ is dense in $L^2_c$, and also $L^2_c$ is isomorphic to $(L^2_c)^*$. So $L^2_c$ is dense in $V^*$. In this case the Gelfand triple is called dense Gelfand triple and the embeddings are denoted as
\begin{equation}\label{gelfand}
	V\overset{\text{dense}}{\hookrightarrow} L^2_c \overset{\text{dense}}{\hookrightarrow} V^*.
\end{equation}
Identifying $L^2_c$ as the pivot space in the dense Gelfand triple, the dual pairing $\langle\cdot,\cdot\rangle$ is the unique continuous extension of the $L^2_c$ the inner product.

\begin{definition}\label{deffr}
	We define the Friedrichs extension of $P$ by 
	$$P_F[\phi]=P[\phi],\quad \phi\in \mathrm{Dom}(P_F),$$
	where $\mathrm{Dom}(P_F)=\{\phi\in V: P[\phi]\in L^2_c\}$ is the domain of $P_F$.
\end{definition}

Define the point spectrum of $P_F$ by
$$\sigma_p(P_F)=\C\setminus\{\lambda\in\C:\mathrm{Ker}(P_F-\lambda I)=0\},$$
its elements we call eigenvalues of $P_F$. We say that a nonzero element $\phi\in\mathrm{Dom}(P_F)$ satisfying $P_F[\phi]=\lambda\phi$ in $L^2_c$ is called eigenfunction of $P_F$ corresponding to eigenvalue $\lambda\in\sigma_p(P_F)$. The set of all eigenfunctions of $P_F$ corresponding to the same eigenvalue $\lambda$, together with zero, is called an eigenspace of $P_F$ associated with that eigenvalue and is denoted by $E_\lambda(P_F)$.

Note that $A$ is the inverse of $P_F$, that is, $A=P_F^{-1}$ is a bijection map. Furthermore, one can show that the operator $P_F$ is self-adjoint and the point spectrum $\sigma_p(P_F)$ coincides with $\sigma_p(P)$ and $E_\lambda(P_F)=\mathrm{Ker}(A-\lambda^{-1}I)$ for any $\lambda\in\sigma_p(P_F)$.

Recall that for any self-adjoint operator the boundedness of the operator is equivalent to boundedness of its spectrum. This yields that the operator $P_F$ is unbounded, because its point spectrum is unbounded. Finally, we notice that $P_F$ is exactly the differential operator $-c^2(x)\partial_x^2$ and its domain corresponds to the spectral problem $(\mathcal{S}_\omega)$, namely
$$\mathrm{Dom}(P_F)=\{\phi\in H^2(\Omega):\phi(1)=0,\,\partial_x \phi(0)+\omega \phi(0)=0\}.$$

\medskip

\subsection{Analyticity of perturbations}
For each problem $(\mathcal{S}_\omega)$, $0\leqslant \omega <1$, Definition~\ref{defweak}, Definition~\ref{deffr} and the first representation theorem (see Chapter~VI, Section~2.1, Theorem~2.1, in~\cite{kato}) give rise to the bilinear form $B_\omega$ on $V$ and to the unique self-adjoint linear operator $P_\omega\,:\, \mathrm{Dom}(P_\omega)\to L^2_c$,
$$(P_\omega[\phi], v)_{L^2_c} = B_\omega(\phi,v),\qquad \phi\in\mathrm{Dom}(P_\omega),\, v\in V,$$
and $\mathrm{Dom}(P_\omega)=\{\phi\in H^2(\Omega):\phi(1)=0,\,\partial_x \phi(0)+\omega \phi(0)=0\}$. Each $P_\omega$ is an unbounded operator $-c(x)^2\partial_x^2$ corresponding to the impedance spectral problem $(\mathcal{S}_\omega)$ in the strong sense and we call $P_\omega$ an associated operator to the form $B_\omega$. The unperturbed form $B_0$, operator $P^0$, and spectral problem $(\mathcal{S}_0)$ for $\omega=0$ are denoted by $B$, $P$, and $(\mathcal{S})$. 

Now, recall that the spectral theorem, see e.g., Theorem~VIII.6 in \cite{reedsimon} or Chapter~XII, Section~2, in~\cite{dunford_schwartz} or Theorem~1.2, Section~1.5 in~\cite{pde6}, gives a one-to-one correspondence between self-adjoint operators and projection-valued measures. Let $\mathcal{M}$ be a collection of all Borel subsets of $\R$. Then, for each fixed $\omega\in [0,1)$ there is a unique projection-valued measure $\{\mathbb{P}_\omega(E)\}_{E\in\mathcal{M}}$ such~that
$$P_\omega = \int_{\sigma(P_\omega)}\lambda\,\mathbb{P}_\omega(d\lambda).$$
This projection-valued measure is called spectral measure. Any $\mathbb{P}_\omega(E)$ is an orthogonal projection in $L^2_c$ and for a fixed $f,g\in L^2_c$, the function $\mu^\omega_{f,g}(E)=(\mathbb{P}_\omega(E)f,g)_{L^2_c}\,:\,\mathcal{M}\to\C$ is a complex-valued Borel measure. We call $\mathbb{P}_\omega(E)$ the spectral projector.

Fix arbitrary $\omega\in [0,1)$ and $n\in\N$. Let $\lambda_n(\omega)\in\sigma(P_\omega)$ denote the $n$-th eigenvalue of $P_\omega$. Since the singleton $\{\lambda_n(\omega)\}\subset\R$ is a Borel (measurable) set, the spectral projector $\mathbb{P}_\omega(\{\lambda_n(\omega)\})$ is well-defined and we denote it by~$\mathbb{P}_{\omega}^{\, n}$. By the definition of spectral measure, we have $\mathrm{Ran}(\mathbb{P}_{\omega}^{\, n}) = E_{\lambda_n(\omega)}(P_\omega)$.

\begin{Lemma}\label{pndformula} \textit{
	Let $0\leqslant\omega <1$. The spectral projector $\mathbb{P}_{\omega}^{\, n}$ admits the Bochner integral formula:
	\begin{equation}\label{pnd1}
		\mathbb{P}_{\omega}^{\, n}[f]=\frac{1}{2\pi i}\,\int_{\Gamma_n}(P_\omega-z)^{-1}[f]\,dz,\qquad f\in L^2_c,
	\end{equation}
	where $\Gamma_n\subset\C$ is a contour oriented clockwise of sufficiently small diameter enclosing only $n$-th eigenvalue $\lambda_n(\omega)$ such that $\Gamma_n\cap\sigma_p(P_{\omega})=\varnothing$. }
\end{Lemma}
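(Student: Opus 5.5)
The plan is to reduce the contour integral to a scalar Cauchy integral term by term, using the eigenfunction expansion of $P_\omega$. Fix $\omega\in[0,1)$ and apply the Hilbert--Schmidt theorem to $A_\omega=P_\omega^{-1}$, which is compact, self-adjoint and injective on $L^2_c$ (coercivity of $B_\omega$ for $\omega<1$). This gives an orthonormal basis $\{\phi_k\}_{k\in\N}\subset\mathrm{Dom}(P_\omega)$ of $L^2_c$ with $P_\omega\phi_k=\lambda_k(\omega)\phi_k$ and $\lambda_k(\omega)\to+\infty$.

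\textbf{Step 1: the spectrum and the spectral projector.} The spectrum of $P_\omega$ is exactly $\{\lambda_k(\omega)\}_{k\in\N}$. Indeed, for $z\notin\{\lambda_k(\omega)\}$ the operator
\[
R(z)f=\sum_{k}\frac{(f,\phi_k)_{L^2_c}}{\lambda_k(\omega)-z}\,\phi_k
\]
is bounded, since $|\lambda_k(\omega)-z|\geqslant \mathrm{dist}(z,\{\lambda_k(\omega)\})>0$, and it inverts $P_\omega-z$. Hence $\sigma(P_\omega)=\sigma_p(P_\omega)$, and the hypothesis $\Gamma_n\cap\sigma_p(P_\omega)=\varnothing$ means $\Gamma_n$ avoids the whole spectrum. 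By the functional calculus for this diagonal operator, the spectral measure is
\[
\mathbb{P}_\omega(E)f=\sum_{\lambda_k(\omega)\in E}(f,\phi_k)_{L^2_c}\,\phi_k .
\]
In particular, $\mathbb{P}^{\,n}_\omega f$ is the sum over those $k$ with $\lambda_k(\omega)=\lambda_n(\omega)$. I will not assume the eigenvalue is simple, although for this Sturm--Liouville problem it is.

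\textbf{Step 2: the integral is well defined.} Because $\Gamma_n$ is compact and disjoint from $\sigma(P_\omega)$, there is $\delta>0$ with $|\lambda_k(\omega)-z|\geqslant\delta$ for all $k$ and all $z\in\Gamma_n$. The map $z\mapsto (P_\omega-z)^{-1}f$ is continuous (indeed analytic) from $\Gamma_n$ to $L^2_c$, by the resolvent identity together with $\|R(z)\|\leqslant\delta^{-1}$. A continuous function on a compact rectifiable contour is Bochner integrable, so the right-hand side of \eqref{pnd1} makes sense.

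\textbf{Step 3: interchanging sum and integral.} Let $R_N(z)f$ denote the partial sums up to $N$. The tail satisfies
\[
\sup_{z\in\Gamma_n}\|R(z)f-R_N(z)f\|_{L^2_c}\leqslant\delta^{-1}\Big(\sum_{k>N}|(f,\phi_k)_{L^2_c}|^2\Big)^{1/2}\to0 .
\]
This uniform convergence on a contour of finite length lets me integrate term by term. Each term then reduces to the scalar integral
\[
\frac1{2\pi i}\oint_{\Gamma_n,\ \text{clockwise}}\frac{dz}{\lambda_k(\omega)-z}=\frac1{2\pi i}\oint_{\text{counterclockwise}}\frac{dz}{z-\lambda_k(\omega)} .
\]
By Cauchy's formula this equals $1$ if $\lambda_k(\omega)$ lies inside $\Gamma_n$ and $0$ otherwise. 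Since only $\lambda_n(\omega)$ is enclosed, the result is $\sum_{\lambda_k(\omega)=\lambda_n(\omega)}(f,\phi_k)_{L^2_c}\,\phi_k=\mathbb{P}^{\,n}_\omega f$, which proves \eqref{pnd1}. The clockwise orientation is precisely what absorbs the minus sign in the usual Riesz formula.

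The main point needing care is Step 1: identifying the abstract projector $\mathbb{P}_\omega(\{\lambda_n(\omega)\})$ with the explicit eigen-expansion, and showing that the spectrum contains nothing besides eigenvalues. I would handle this by noting that $P_\omega$ is unitarily equivalent to multiplication by $\lambda_k(\omega)$ on $\ell^2$, via $f\mapsto\{(f,\phi_k)_{L^2_c}\}_{k}$. For a multiplication operator, the spectral measure is multiplication by the indicator $\mathds{1}_E(\lambda_k(\omega))$, and the uniqueness part of the spectral theorem does the rest.
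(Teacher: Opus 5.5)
Your proof is correct and follows essentially the same strategy as the paper. Both arguments diagonalize the resolvent, obtain Bochner integrability from continuity and the bound $\mathrm{dist}(\sigma(P_\omega),\Gamma_n)^{-1}$, interchange the contour integral with the spectral decomposition, and finish with Cauchy's formula in the clockwise orientation. The only difference is mechanical: the paper pairs with $g$ and applies Fubini against the spectral measure $\mu^\omega_{f,g}$, whereas you expand in the explicit eigenbasis and use uniform convergence of partial sums on $\Gamma_n$. Your route also makes explicit that $\sigma(P_\omega)=\sigma_p(P_\omega)$, which the paper uses tacitly when it passes from $\Gamma_n\cap\sigma_p(P_\omega)=\varnothing$ to $\mathrm{dist}(\sigma(P_\omega),\Gamma_n)>0$.
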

\begin{proof}
	To begin with, notice that the function $\lambda\mapsto (\lambda-z)^{-1}$, $z\in\Gamma_n$ is bounded on the spectrum $\sigma(P_\omega)$, since 
	$$\inf\left\{|\lambda-z|:\lambda\in\sigma(P_\omega),\,z\in\Gamma_n\right\}=\mathrm{dist}(\sigma(P_\omega),\Gamma_n)>0,$$
	where the positivity follows from the fact that $\sigma(P_\omega)$ is closed and $\Gamma_n$ is compact. Also, this function is Borel measurable due to continuity in $\lambda\in\R\setminus\Gamma_n$. Therefore, applying the functional calculus (see e.g., Chapter~XII, Section~2, Theorem~6 in~\cite{dunford_schwartz} or Theorem~VIII.5 in~\cite{reedsimon}) to the function $\lambda\mapsto (\lambda-z)^{-1}$ gives the identity 
	\begin{equation}\label{pm1upb}
		\|(P_\omega - z)^{-1}\|_{\mathscr{B}(L^2_c)}=\mathrm{dist}(\sigma(P_\omega),z)^{-1},\qquad z\in\Gamma_n,
	\end{equation}
	and, moreover, for any $f,g\in L^2_c$ and all $z\in\Gamma_n$,
	$$(P_\omega - z)^{-1}=\int\limits_{\sigma(P_\omega)}\frac{\mathbb{P}_\omega(d\lambda)}{\lambda-z},\quad ((P_\omega - z)^{-1}f,g)_{L^2_c} = \int\limits_{\sigma(P_\omega)}\frac{\mu^\omega_{f,g}(d\lambda)}{\lambda-z} .$$
	
	Now, we show that the Bochner integral in (\ref{pnd1}) is well-defined. The function $z\mapsto (P_\omega-z)^{-1}$ is continuous in operator norm of $\mathscr{B}(L^2_c)$ on $\Gamma_n$, which follows from the identity
	$$(P_\omega-z)^{-1}-(P_\omega-z_0)^{-1}=(z-z_0)(P_\omega-z)^{-1}(P_\omega-z_0)^{-1},\qquad z,z_0\in\Gamma_n.$$
	Whence, for any $f\in L^2_c$ the map $z\mapsto (P_\omega-z)^{-1}f$ is Bochner measurable. Furthermore, since the contour $\Gamma_n$ is compact, continuity of the resolvent on~$\Gamma_n$ implies uniform continuity in $z\in\Gamma_n$. In particular, the upper bound (\ref{pm1upb}) gives Bochner integrability of $z\mapsto (P_\omega-z)^{-1}f$ for a fixed $f\in L^2_c$ due to the following upper bound,
	$$\int\limits_{\Gamma_n}\|(P_\omega-z)^{-1}f\|_{L^2_c}\,dz\leqslant \frac{2\pi\|f\|_{L^2_c}}{\mathrm{dist}(\sigma(P_\omega),\Gamma_n)},\qquad f\in L^2_c.$$
	
	We are ready to prove the identity (\ref{pnd1}). Fix arbitrary $f,g\in L^2_c$. Since the linear functional $(\cdot,g)_{L^2_c}$ is bounded, it commutes with Bochner integrals, and, therefore,
	$$\left(\,\int_{\Gamma_n}(P_\omega-z)^{-1}f\,dz,g\right)_{L^2_c} = \int\limits_{\Gamma_n}((P_\omega-z)^{-1}f,g)_{L^2_c}\,dz = \int\limits_{\Gamma_n}\int\limits_{\sigma(P_\omega)}\frac{\mu^\omega_{f,g}(d\lambda)}{\lambda-z}\,dz.$$
	The modulus of the latter term is finite due to boundedness of the integrand on $\Gamma_n\times\sigma(P_\omega)$ and finiteness of $|\mu^\omega_{f,g}(\R)|=|(f,g)_{L^2_c}|<+\infty$. Hence, by the Fubini-Tonelli theorem one can exchange the order of integration. Cauchy's integral formula entails that $\int_{\Gamma_n}(\lambda-z)^{-1}\,dz$ equals $2\pi i$ whenever $\lambda$ is in the interior of the contour $\Gamma_n$ and $0$ otherwise. And again, since $\sigma(P_\omega)$ is discrete and only $\lambda_n(\omega)$ is inside $\Gamma_n$ by the definition of the contour, we obtain
	$$\frac{1}{2\pi i}\left(\,\int_{\Gamma_n}(P_\omega-z)^{-1}f\,dz,g\right)_{L^2_c}=\mu^\omega_{f,g}(\{\lambda_n(\omega)\})=(\mathbb{P}_{\omega}^{\, n}[f],g)_{L^2_c}.$$
	This concludes the lemma.
\end{proof}

By the definition of the Bochner integral of an operator-valued function,  $\left(\int_{\Gamma_n}(P_\omega-z)^{-1}\,dz\right) (f) = \int_{\Gamma_n}(P_\omega-z)^{-1}f\,dz$, $f\in L^2_c$. Therefore, we write
$$\mathbb{P}_{\omega}^{\, n} =\frac{1}{2\pi i}\int_{\Gamma_n}(P_\omega-z)^{-1}\,dz,$$
which is itself a well-defined Bochner integral for Banach algebra $\mathscr{B}(L^2_c)$. 

The spectral projector definition $\mathbb{P}_{\omega}^{\, n}$ provides no handle on the $\omega$~dependence. Nevertheless, Lemma~\ref{pndformula} gives a convenient formula to examine differentiability with respect to $\omega$, which, in turn, implies the following result.

\begin{Proposition}\label{lem1}
	\textit{For $n\in \N$, the spectral projector $\mathbb{P}_{\omega}^{\, n}$ is analytic in $\omega$ on the interval $[0,1)$.}
\end{Proposition}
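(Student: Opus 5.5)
The natural route is Kato's analytic perturbation theory of type (B) for holomorphic families of sesquilinear forms, combined with the Riesz projection formula of Lemma~\ref{pndformula}. The forms $B_\omega(\phi,v)=-\omega\phi(0)\overline{v(0)}+\int_\Omega\partial_x\phi\,\overline{\partial_x v}\,dx$ all share the form domain $V$, which does not depend on $\omega$. They are affine in $\omega$: $B_\omega=B+\omega\,b$ with $b(\phi,v)=-\phi(0)\overline{v(0)}$. The first step is to extend $\omega$ to a complex neighbourhood $\mathcal{O}$ of $[0,1)$, for instance $\{\omega\in\C:\mathrm{dist}(\omega,[0,1))<\delta_\omega\}$ locally. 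There I would check that $b$ is relatively form-bounded with respect to $B$ with arbitrarily small relative bound. This follows from the one-dimensional trace inequality $|\phi(0)|^2\leqslant \varepsilon\|\partial_x\phi\|^2_{L^2(\Omega)}+C_\varepsilon\|\phi\|^2_{L^2(\Omega)}$ together with the equivalence of $L^2$ and $L^2_c$. Consequently $B_\omega$ is a closed sectorial form on $V$ for every $\omega\in\mathcal O$, and $\omega\mapsto B_\omega(\phi,\phi)$ is holomorphic for each $\phi\in V$. This is exactly a holomorphic family of type (a) in Kato's sense (Chapter~VII, \S4). The associated m-sectorial operators $P_\omega$ then form a holomorphic family of type (B), and for real $\omega$ they coincide with the self-adjoint operators constructed above.

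Next I would show that the resolvent is analytic in a suitable sense. Fix $\omega_0\in[0,1)$ and the contour $\Gamma_n$ around $\lambda_n(\omega_0)$ from Lemma~\ref{pndformula}. I want $(P_\omega-z)^{-1}$ to exist and be holomorphic in $\omega$ in a neighbourhood of $\omega_0$, uniformly in $z\in\Gamma_n$. Concretely, I would write the resolvent through the Lax--Milgram operators $V\to V^*$: $\mathcal P_\omega - z = (\mathcal P_{\omega_0}-z) + (\omega-\omega_0)\mathcal{B}$, where $\mathcal B:V\to V^*$ is defined by $\langle\mathcal B\phi,v\rangle=b(\phi,v)$ and is bounded by the trace theorem. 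This gives the Neumann series
\[
(\mathcal P_\omega-z)^{-1}=\sum_{k\geqslant0}(\omega_0-\omega)^k\bigl[(\mathcal P_{\omega_0}-z)^{-1}\mathcal B\bigr]^k(\mathcal P_{\omega_0}-z)^{-1}.
\]
The main obstacle is to bound $\|(\mathcal P_{\omega_0}-z)^{-1}\|_{V^*\to V}$ uniformly on the compact set $\Gamma_n$. I would try first to deduce this from the $L^2_c$ bound \eqref{pm1upb} and coercivity. Using the identity $(\mathcal P_{\omega_0}-z)^{-1}=(\mathcal P_{\omega_0}+\mu)^{-1}\bigl(I+(z+\mu)(\mathcal P_{\omega_0}-z)^{-1}\bigr)$ with $\mu$ large, the $V^*\to V$ norm is controlled by $\mathrm{dist}(\sigma(P_{\omega_0}),\Gamma_n)^{-1}$ up to constants. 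Continuity in $z$ then gives the uniform bound. So for $|\omega-\omega_0|<r$, with $r$ independent of $z\in\Gamma_n$, the series converges in $\mathscr B(L^2_c)$ uniformly in $z$. In particular $\Gamma_n$ stays in the resolvent set, and no eigenvalue crosses it because the number of eigenvalues inside equals the rank of the continuous projector, which is locally constant.

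Finally, I substitute the series into $\mathbb{P}^{\,n}_\omega=\frac1{2\pi i}\int_{\Gamma_n}(P_\omega-z)^{-1}dz$. Since the convergence is uniform in $z$, I can integrate term by term, which yields a norm-convergent power series in $\omega-\omega_0$ with coefficients in $\mathscr B(L^2_c)$. Hence $\omega\mapsto\mathbb P^{\,n}_\omega$ is real-analytic near every $\omega_0\in[0,1)$. At the endpoint $\omega_0=0$ this holds as a one-sided statement, or two-sided since the forms are defined for small negative $\omega$ as well. A side remark needed for the later use of this result: in one dimension the eigenvalues are simple, because an eigenfunction is determined up to a constant by its Cauchy data at $x=1$. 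So the projector has rank one, and the enclosed eigenvalue $\lambda_n(\omega)=\mathrm{tr}(P_\omega\mathbb P^{\,n}_\omega)$ is analytic too, which is what the subsequent differentiability of $\lambda_n(\omega)$ will use.
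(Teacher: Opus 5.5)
Your proposal is correct and follows essentially the paper's route. Both use a Neumann series for the resolvent in $\mathscr{B}(V^*,V)$ in powers of $(\omega-\omega_0)$ times the bounded trace operator, with a radius of convergence uniform over the compact contour $\Gamma_n$, inserted into the Riesz formula of Lemma~\ref{pndformula}. The only real difference is at the end: you integrate the series term by term to get a norm-convergent power series, whereas the paper proves weak analyticity of $(\mathbb{P}_{\omega}^{\,n}f,g)_{L^2_c}$ and invokes local boundedness.

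One small repair is needed. In your one-sided identity the factor $(\mathcal{P}_{\omega_0}-z)^{-1}$ still acts on $V^*$, where \eqref{pm1upb} gives no bound. Use the two-sided version $(\mathcal{P}_{\omega_0}-z)^{-1}=(\mathcal{P}_{\omega_0}+\mu)^{-1}+(z+\mu)(\mathcal{P}_{\omega_0}+\mu)^{-2}+(z+\mu)^2(\mathcal{P}_{\omega_0}+\mu)^{-1}(\mathcal{P}_{\omega_0}-z)^{-1}(\mathcal{P}_{\omega_0}+\mu)^{-1}$, in which the middle resolvent acts only on $L^2_c$. Alternatively, rely on continuity in $z$ on $\Gamma_n$, as the paper does.
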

\begin{proof}
	Let the trace operator $\gamma\,:\,V\to V^*$ be defined via $\langle\gamma[\phi],v\rangle=\phi(0)v(0)$, so $\gamma$ is bounded. Due to Definition~\ref{deffr} the operators $P_\omega$ can be understood as operators $V\to V^*$ via $\langle P_\omega[\phi],v\rangle = B_\omega(\phi,v)$, $\phi,v\in V$. We notice that $P_\omega=P-\omega\gamma$ holds in $\mathscr{B}(V,V^*)$ and for any $z\in\C\setminus\sigma(P_\omega)$ the resolvent operator $(P_\omega-z)^{-1}$ belongs to $\mathscr{B}(V^*,V)$. 
	
	In the view of Chapter~VII, Section~4.2, Remark~4.6, in~\cite{kato}, we show that the resolvent $(P_\omega-z)^{-1}\,:\, V^*\to V$ is analytic in $\omega$ near any fixed $\omega_0\in [0,1)$, for any fixed $z\in\C\setminus\sigma(P_{\omega_0})$. Indeed, writing $P_\omega=P_{\omega_0}-(\omega-\omega_0)\gamma$,
	$$(P_\omega-z)^{-1}=\bigl(I-(\omega-\omega_0)(P_{\omega_0}-z)^{-1}\gamma\bigr)^{-1}(P_{\omega_0}-z)^{-1},$$
	and the Neumann series
	$$\bigl(I-(\omega-\omega_0)(P_{\omega_0}-z)^{-1}\gamma\bigr)^{-1}=I+\sum_{k\in\N}(\omega-\omega_0)^k\bigl((P_{\omega_0}-z)^{-1}\gamma\bigr)^k$$
	converges in $\mathscr{B}(V)$ for $|\omega-\omega_0|<\|(P_{\omega_0}-z)^{-1}\gamma\|^{-1}_{\mathscr{B}(V)}$. Whence, $(P_\omega-z)^{-1}$ inherits analyticity in a neighborhood of $\omega_0$ in $\C$ as a power series convergent in operator norm~$\mathscr{B}(V^*,V)$. Since $[0,1)$ is connected, on overlapping neighborhoods the power series agree by uniqueness of analytic continuation, see the Identity Principle in Chapter~II, Section~5, Proposition~5.7, in~\cite{mujica}. 
	
	Consider the composition $\imath (P_\omega-z)^{-1}\jmath$, where $\imath\,:\,V\hookrightarrow L^2_c$ and $\jmath\,:\,L^2_c\hookrightarrow V^*$ are continuous embeddings from the Gelfand triple (\ref{gelfand}). This operator coincides with the resolvent $(P_\omega-z)^{-1}\in\mathscr{B}(L^2_c)$, since both solve the same variational problem and the solution is unique by the Lax-Milgram theorem. Composing holomorphic operators with bounded operators preserves holomorphy, see Chapter II, Section~5, Exercise 5A in~\cite{mujica}, and it yields that for $z\in\C\setminus\bigcup_{\omega\in[0,1)}\sigma(P_\omega)$ the resolvent $(P_\omega-z)^{-1}$ depends analytically on $\omega$ on the whole interval $[0,1)$.
	
	Analyticity of $\mathbb{P}_{\omega}^{\, n}$ in $\omega$ is a consequence of Lemma~\ref{pndformula} and analyticity of the resolvent $(P_\omega-z)^{-1}$ under the Bochner integral sign. Indeed, fix $\omega_0\in [0,1)$ and choose a contour $\Gamma_n\subset \C\setminus\sigma(P_{\omega_0})$ enclosing $\lambda_n(\omega_0)\in\sigma(P_{\omega_0})$. Analyticity of the resolvent implies continuity of $\omega\mapsto\|(P_\omega-z)^{-1}\|_{\mathscr{B}(L^2_c)}$ on $\Gamma_n$. Hence, the radius of convergence of the Neumann series above, taken at $\omega_0$, is bounded from below by $\delta_n=\inf\{\|(P_{\omega_0}-z)^{-1}\gamma\|^{-1}_{\mathscr{B}(V)}:z\in\Gamma_n\}>0$, so there is an interval $I\subset[0,1)$ containing $\omega_0$, the same for all $z\in\Gamma_n$, on which $\Gamma_n\subset\C\setminus\sigma(P_\omega)$ and the resolvent $(P_\omega-z)^{-1}$ is analytic in $\omega$ for every $z\in\Gamma_n$. Shrinking $I$ if necessary, the relation (\ref{pm1upb}) and the compactness of $\Gamma_n$ give
	$$M=\sup\left\{\|(P_\omega-z)^{-1}\|_{\mathscr{B}(L^2_c)}:z
	\in\Gamma_n,\omega\in I\right\}< +\infty .$$
	
	Fix $f,g\in L^2_c$ and $z\in\Gamma_n$. Then, the function $\omega\mapsto ((P_{\omega}-z)^{-1}f,g)_{L^2_c}$ is analytic, since it is the evaluation of the analytic resolvent against the fixed bounded bilinear form $(\cdot,\cdot)_{L^2_c}$. Also, this function is uniformly bounded,
	$$|((P_{\omega}-z)^{-1}f,g)_{L^2_c}|\leqslant M \|f\|_{L^2_c}\|g\|_{L^2_c},\qquad \omega\in I,\,z\in\Gamma_n.$$
	That is, for each fixed $f$ and $g$, this function is dominated by an integrable function in $z\in\Gamma_n$. Hence, by the classical complex analysis we obtain that for all $f,g\in L^2_c$,
	$$(\mathbb{P}_{\omega}^{\, n}[f],g)_{L^2_c}=\frac{1}{2\pi i}\,\int\limits_{\Gamma_n}((P_\omega-z)^{-1}f,g)_{L^2_c}\,dz$$
	is analytic in $\omega\in I$. Furthermore, the map $\omega\mapsto\mathbb{P}_{\omega}^{\, n}$ is locally bounded in operator norm due to
	$$\|\mathbb{P}_{\omega}^{\, n}\|_{\mathscr{B}(L^2_c)}\leqslant \frac{|\Gamma_n|}{2\pi}\sup\left\{\|(P_\omega-z)^{-1}\|_{\mathscr{B}(L^2_c)}:z
	\in\Gamma_n\right\} \lesssim M,\qquad \omega\in I.$$
	A standard characterization of analytic operators yields that the spectral projector $\mathbb{P}_{\omega}^{\, n}$ is analytic in $I$. Finally, since the choice of $\omega_0$ is arbitrary, covering $[0,1)$ by overlapping intervals implies analyticity of the spectral projector $\mathbb{P}_{\omega}^{\, n}$ on all of $[0,1)$.
\end{proof}

Now, we prove an auxiliary lemma. 

\begin{Lemma}\label{uniqS}
	\textit{Let $0\leqslant\omega < 1$. For any $\lambda\in \sigma_p(P_\omega)$, $\mathrm{dim}\,E_\lambda(P_\omega)=1$, that is, all eigenvalues of $P_\omega$ are simple.}
\end{Lemma}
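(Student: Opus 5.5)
The plan is the standard ODE uniqueness argument. Fix $\omega\in[0,1)$ and $\lambda\in\sigma_p(P_\omega)$. Take two eigenfunctions $\phi,\psi\in E_\lambda(P_\omega)$, and recall that $\mathrm{Dom}(P_\omega)\subset H^2(\Omega)$. Both then satisfy $\partial_x^2\phi=-\lambda c^{-2}\phi$ almost everywhere. Since $c\in C^2$ and $c\geqslant C_{min}>0$, the coefficient $c^{-2}$ is continuous on $\overline\Omega$. The equation therefore gives $\partial_x^2\phi\in C(\overline\Omega)$, so $\phi,\psi\in C^2(\overline\Omega)$ are classical solutions of the linear second order ODE
$$\phi''+\lambda c^{-2}(x)\phi=0 \quad\text{on }[0,1],$$
with continuous coefficients.

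I would finish with either of two equivalent arguments.

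\emph{Uniqueness for the Cauchy problem.} Both $\phi$ and $\psi$ satisfy the Dirichlet condition at $x=1$, that is $\phi(1)=\psi(1)=0$. Set
$$w=\psi'(1)\,\phi-\phi'(1)\,\psi.$$
Then $w$ solves the same ODE with $w(1)=0$ and $w'(1)=0$. The Picard--Lindelöf uniqueness theorem, for a linear system with continuous coefficients on the compact interval $[0,1]$, gives $w\equiv0$.

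\emph{Wronskian.} Alternatively, let $W=\phi\psi'-\phi'\psi$. It satisfies $W'=0$, and $W(1)=0$, so $\phi$ and $\psi$ are linearly dependent.

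It remains to check that the combination $w\equiv 0$ is nontrivial, i.e.\ that $(\phi'(1),\psi'(1))\neq(0,0)$ whenever $\psi\neq 0$. If $\psi'(1)=0$, then $\psi$ has zero Cauchy data at $x=1$, and the same uniqueness theorem forces $\psi\equiv0$. Hence any nonzero eigenfunction has $\psi'(1)\neq0$. It follows that
$$\phi=\frac{\phi'(1)}{\psi'(1)}\,\psi,$$
so $\dim E_\lambda(P_\omega)=1$.

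Notice that the Robin condition at $x=0$ is never used; only the Dirichlet condition at $x=1$ and the second order nature of the equation matter.

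The only delicate point is the regularity upgrade from the weak or $H^2$ setting to classical solutions, so that the ODE uniqueness theorem applies. One could avoid this step entirely by working with absolutely continuous $\phi'$. Since $\phi\in H^2(\Omega)$, the derivative $\phi'$ is absolutely continuous. The equation then holds as a first order system $(\phi,\phi')'=M(x)(\phi,\phi')$ with $M\in C(\overline\Omega)$, and Carathéodory/Grönwall uniqueness applies directly. I expect no real obstacle beyond stating this cleanly.
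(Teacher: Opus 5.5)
Your proof is correct and is essentially the paper's argument. Both upgrade the $H^2$ eigenfunctions to classical solutions of a linear second-order ODE with continuous coefficients. Both then use Cauchy uniqueness at $x=1$ together with the Dirichlet condition there. Your combination $w=\psi'(1)\phi-\phi'(1)\psi$ is an explicit form of the paper's observation that $\phi\mapsto\partial_x\phi(1)$ is injective on $E_\lambda(P_\omega)$, which gives $\dim E_\lambda(P_\omega)\leqslant 1$.
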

\begin{proof}
By the discussion following Definition~\ref{deffr}, $E_\lambda(P_\omega)=E_\lambda(P_F)$ and $\mathrm{Dom}(P_F)\subset H^2(\Omega)$. Let $\phi\in E_\lambda(P_F)$. By the one-dimensional Sobolev embedding $H^2(\Omega)\hookrightarrow C^1(\overline\Omega)$, the values $\phi(1)$ and $\phi'(1)$ are well defined, and $\phi(1)=0$ since $\phi\in V$. Moreover, the eigenvalue equation $P_F[\phi]=\lambda\phi$ is the differential equation of $(\mathcal{S}_{\omega,\lambda})$, a linear second-order equation with continuous coefficients on $\overline\Omega$, so by uniqueness for its Cauchy problem a solution with $\phi(1)=\partial_x\phi(1)=0$ vanishes identically.

Consider the linear map
$$E_\lambda(P_F)\to\R,\qquad\phi\mapsto\partial_x\phi(1).$$
If $\partial_x\phi(1)=0$, then $\phi(1)=\partial_x\phi(1)=0$ and so $\phi=0$. Hence the map is injective and $\dim E_\lambda(P_F)\leqslant1$. Since $\lambda$ is an eigenvalue, $\dim E_\lambda(P_\omega)=1$.
\end{proof}

Pick arbitrary $\lambda\in \sigma_p(P_\omega)$. Due to Lemma~\ref{uniqS}, there is $\phi\in V$ such that $\|\phi\|_{L^2_c}=1$ and both $\pm\phi\in E_\lambda(P_\omega)$ are solutions to the impedance spectral problem $(\mathcal{S}_{\omega,\lambda})$. Denote $\{v\in V:\|v\|_{L^2_c}=1,v(0)>0\}$ by $\widetilde{V}$. Hence, for any $\lambda\in \sigma_p(P_\omega)$ there is one and only one weak solution $\phi\in E_\lambda(P_\omega)\cap \widetilde{V}$ to the impedance spectral problem $(\mathcal{S}_{\omega,\lambda})$.

\begin{definition}
	Consider the family of impedance spectral problems $(\mathcal{S}_\omega)$ for $\omega\in [0,1)$. Lemma~\ref{uniqS} allows us to define for each $\omega$ the family of pairs 
	$$\{(\lambda_n(\omega),\phi_{n,\omega})\}_{n\in \N}\subset \R_+\times \widetilde{V},$$ 
	where $\lambda_n(\omega)\in\sigma_p(P_\omega)$ and $\phi_{n,\omega}\in E_{\lambda_n(\omega)}(P_\omega)\cap \widetilde{V}$ are an eigenvalue and the corresponding unique eigenfunction of $P_\omega$. Let us omit the index $0$ in pairs $\{(\lambda_n(0),\phi_{n,0})\}_{n\in\N}$ and write simply $\{(\lambda_n,\phi_n)\}_{n\in \N}$. Note that the elements of the spectrum $\sigma_p(P_\omega)$ are discrete with no accumulation point, meaning that for a fixed $\omega$ all eigenvalues $\lambda_n(\omega)$ are isolated. Moreover, for any $n\in\N$ and any $\omega\in[0,1)$ one has $\mathrm{Ran}(\mathbb{P}_{\omega}^{\, n}) = \mathrm{span}\{\phi_{n,\omega}\}$.
\end{definition}

Notice that $V$ does not depend on $\omega$, while $\mathrm{Dom}(P_\omega)$ clearly changes with $\omega$. So the family of sesquilinear forms $\{B_\omega\}_{\omega\in [0,1)}$ share the common domain $V$; yet this is not the case for the collection of associated operators $\{P_\omega\}_{\omega\in [0,1)}$. However, clearly, for any fixed $\phi, v\in V$, the function $\omega\mapsto B_\omega(\phi,v)$ depends linearly on $\omega$, hence analytically. This makes the collection $\{P_\omega\}_{\omega\in [0,1)}$ a holomorphic family of type (B), in the sense of Kato, see Chapter~VII, Section~4.2, Theorem~4.8, in~\cite{kato}. By Kato’s spectral perturbation theory (Chapter VII), the discrete eigenvalues $\lambda_n(\omega)$ and their spectral projections $\mathbb{P}_{\omega}^{\, n}$ depend analytically on $\omega$. Since all eigenvalues are simple and the corresponding eigenfunction is uniquely normalized by Lemma~\ref{uniqS}, the eigenfunction $\phi_{n,\omega}$ can be chosen to depend analytically on $\omega$. Nevertheless, we develop a self-contained re-derivation of those results of Kato in our concrete setting, so we proceed with the final result of this section as follows.

\begin{Proposition}\label{lem2}
	\textit{For each $n\in \N$, the eigenvalue $\lambda_n(\omega)$ and the eigenfunction $\phi_{n,\omega}$ are analytic in $\omega$ for $0\leqslant\omega <1$.}
\end{Proposition}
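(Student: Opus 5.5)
The plan is to derive analyticity of the eigenfunction and eigenvalue directly from Proposition~\ref{lem1}, which gives analyticity of the rank-one spectral projector $\omega\mapsto\mathbb{P}_{\omega}^{\, n}$ on $[0,1)$ in $\mathscr{B}(L^2_c)$. Fix $\omega_0\in[0,1)$ and set $\psi(\omega)=\mathbb{P}_{\omega}^{\, n}[\phi_{n,\omega_0}]$. This is an analytic $L^2_c$-valued function with $\psi(\omega_0)=\phi_{n,\omega_0}\neq0$. By Lemma~\ref{uniqS}, $\mathrm{Ran}(\mathbb{P}_{\omega}^{\, n})=\mathrm{span}\{\phi_{n,\omega}\}$, so $\psi(\omega)=(\phi_{n,\omega_0},\phi_{n,\omega})_{L^2_c}\,\phi_{n,\omega}$. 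Continuity gives $(\psi(\omega),\phi_{n,\omega_0})_{L^2_c}\neq0$ near $\omega_0$, hence $\psi(\omega)\neq0$ there. Set
\[
\phi_{n,\omega}=\pm\frac{\psi(\omega)}{\|\psi(\omega)\|_{L^2_c}},
\]
with the sign fixed by $\phi_{n,\omega}(0)>0$. Since $\|\psi\|^2_{L^2_c}$ is a real-analytic positive function, its inverse square root is real-analytic. So the normalized vector is analytic in $L^2_c$, provided the sign does not flip.

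The sign is fixed by the point value $\psi(\omega)(0)$, which requires analyticity in $V$, not only in $L^2_c$. To get it, I will use the representation $\mathbb{P}_{\omega}^{\, n}=\imath\,\frac{1}{2\pi i}\int_{\Gamma_n}(P_\omega-z)^{-1}\,dz\,\jmath$. The proof of Proposition~\ref{lem1} shows that the resolvent is analytic in $\mathscr{B}(V^*,V)$, uniformly for $z\in\Gamma_n$. Hence $\omega\mapsto\mathbb{P}_{\omega}^{\, n}\in\mathscr{B}(V^*,V)$ is analytic, and so is $\psi$ as a $V$-valued map. The trace $v\mapsto v(0)$ is bounded on $V$, so $\omega\mapsto\psi(\omega)(0)$ is analytic. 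Moreover $\psi(\omega)(0)=(\phi_{n,\omega_0},\phi_{n,\omega})\,\phi_{n,\omega}(0)$. Here $\phi_{n,\omega}(0)\neq0$, since otherwise the Robin condition gives $\phi'(0)=0$ as well, and Cauchy uniqueness forces $\phi\equiv0$. Hence $\psi(\omega)(0)$ never vanishes near $\omega_0$ and has constant sign. The choice of sign is therefore locally constant, and $\phi_{n,\omega}$ is analytic in $V$ near $\omega_0$.

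For the eigenvalue I will use the weak formulation:
\[
\lambda_n(\omega)=\frac{B_\omega(\psi(\omega),\psi(\omega))}{\|\psi(\omega)\|^2_{L^2_c}}.
\]
The form $B_\omega(\phi,v)=B(\phi,v)-\omega\phi(0)v(0)$ is affine in $\omega$ and bounded on $V\times V$. Composing it with the $V$-analytic $\psi$ gives an analytic numerator, and the denominator is analytic and nonvanishing, so $\lambda_n(\omega)$ is analytic near $\omega_0$. Since $\omega_0$ is arbitrary, and the local definitions agree because of the normalization in $\widetilde V$, analyticity holds on all of $[0,1)$.

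The main obstacle is the labeling: ensuring that $\lambda_n(\omega)$, the $n$-th eigenvalue, is the one enclosed by the fixed contour $\Gamma_n$ for all $\omega$ near $\omega_0$. Rank continuity of the analytic projector gives exactly one eigenvalue inside $\Gamma_n$. To show it is the $n$-th in the ordering, I will use the analogous projectors for all $\lambda_k(\omega_0)$ with $k\leqslant n$, with disjoint contours. I will also use the bound $\|(P_\omega-z)^{-1}\|\leqslant M$ on a region of the real line covering $[0,\lambda_n(\omega_0)+\delta]$ minus these contours. Together these show no other eigenvalue enters below, and the ordering is preserved.
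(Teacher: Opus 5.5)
Your proposal is correct and follows essentially the paper's argument: normalize $\mathbb{P}^{\,n}_\omega[v]$, fix the sign by the trace at $x=0$, and read off $\lambda_n(\omega)$ from the quadratic form $B_\omega$. The paper takes a general $v\in\widetilde V$ with $\mathbb{P}^{\,n}_{\omega_0}[v]\neq0$, where you take $v=\phi_{n,\omega_0}$. Your additional points are details the paper leaves implicit, and they strengthen the same proof rather than change it: analyticity of the projector in $\mathscr{B}(V^*,V)$, so that the trace and $B_\omega$ may legitimately be applied; nonvanishing of $\phi_{n,\omega}(0)$ via the Robin condition and Cauchy uniqueness; and preservation of the eigenvalue labeling inside $\Gamma_n$.
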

\begin{proof}
	Fix arbitrary $\omega_0\in [0,1)$. Because $\mathbb{P}^{\,n}_{\omega_0}$ is the nonzero spectral projector onto 1 dimensional eigenspace, we fix a function $v\in\widetilde{V}$ such that $\mathbb{P}^{\,n}_{\omega_0} [v]\ne 0$. By Proposition~\ref{lem1}, the map $p(\omega)=(\mathbb{P}_{\omega}^{\, n} [v],\mathbb{P}_{\omega}^{\, n} [v])_{L^2_c}$ is analytic in $\omega\in [0,1)$, since the function $v$ is independent of $\omega$. Notice that $p(\omega_0) > 0$. By continuity of $p$, there is an interval $I$ containing $\omega_0$ such that $p(\omega) > 0$ for any $\omega\in I$. Therefore, the function
	$$\omega\mapsto\frac{1}{\|\mathbb{P}_{\omega}^{\, n} [v]\|_{L^2_c}} = \frac{1}{\sqrt{p(\omega)}}$$
	is analytic on $I$, since a nonvanishing analytic function has a locally analytic logarithm and an analytic reciprocal. Moreover, Proposition~\ref{lem1} implies that $\mathbb{P}_{\omega}^{\, n}[v]$ is analytic in $\omega\in[0,1)$, since $v$ is independent of $\omega$. We define 
	$$\widetilde{\phi}_{n,\omega}=\frac{\mathbb{P}_{\omega}^{\, n} [v]}{\|\mathbb{P}_{\omega}^{\, n} [v]\|_{L^2_c}},\qquad n\in\N,\;\omega\in I.$$
	As the product of two analytic functions is analytic, it follows that $\widetilde{\phi}_{n,\omega}\in\mathrm{Ran}(\mathbb{P}_{\omega}^{\, n})$ is analytic on $I$. Also, for any $\omega\in I$, $\widetilde{\phi}_{n,\omega}\in\mathrm{span}\{\phi_{n,\omega}\}$ and $\|\widetilde{\phi}_{n,\omega}\|_{L^2_c}=1$. 
	
	Now we note that at $\omega=\omega_0$, either $\widetilde{\phi}_{n,\omega_0}=\phi_{n,\omega_0}$ or $\widetilde{\phi}_{n,\omega_0}=-\phi_{n,\omega_0}$. By the obtained above analyticity, $\widetilde{\phi}_{n,\omega}(0)$ has the same sign as $\widetilde{\phi}_{n,\omega_0}(0)$ on a subinterval $\widetilde{I}\subset I$. Without loss of generality, let $\widetilde{\phi}_{n,\omega_0}=\phi_{n,\omega_0}$. Thus, for any $\omega\in\widetilde{I}$, the eigenfunction $\widetilde{\phi}_{n,\omega}$ belongs to $E_{\lambda_n(\omega)}(P_\omega)\cap \widetilde{V}$. Recall that Lemma~\ref{uniqS} implies that for all $\omega\in\widetilde{I}$ there is exactly one eigenfunction of $P_\omega$ corresponding to $\lambda_n(\omega)$ in the energy class $E_{\lambda_n(\omega)}(P_\omega)\cap \widetilde{V}$. This entails  
	$$\widetilde{\phi}_{n,\omega}=\phi_{n,\omega},\qquad \omega\in\widetilde{I},$$
	which gives analyticity of $\omega\mapsto\phi_{n,\omega}$ on the subinterval $\widetilde{I}$. Finally, because the choice of $\omega_0\in [0,1)$ is arbitrary, we repeat this argument for any basepoint $\omega_0$ and get analyticity on overlapping intervals.
	
	Analyticity of the eigenvalues is a straightforward consequence of the following identity,
	$$\lambda_n(\omega)=B_\omega(\phi_{n,\omega},\phi_{n,\omega}),$$
	and the fact that for any fixed $\phi, v\in V$, the function $\omega\mapsto B_\omega(\phi,v)$ depends linearly on $\omega\in [0,1)$. 
\end{proof}

Now, the Gateaux derivatives of eigenvalues $\lambda_n(\omega)$ and normalized eigenfunctions $\phi_{n,\omega}$ with respect to $\omega$ are well-defined. For each $n\in \N$, we call an element $\frac{d}{d\omega}\phi_{n,\omega}\in V$, $\omega\in (0,1)$, the Gateaux derivative of $\phi_{n,\omega}$ with respect to $\omega$ if and only if
$$\left\|\frac{\phi_{n,\omega +\varepsilon} - \phi_{n,\omega}}{\varepsilon}-\frac{d\phi_{n,\omega}}{d\omega}\right\|_{H^1(\Omega)}\to 0\;\text{ as }\; {\varepsilon\to 0}.$$
The derivative of the $n$-th eigenvalue curve $\{\lambda_n(\omega)\}_{\omega\in [0,1)}$ is the right derivative with respect to $\omega$, 
$$\lambda^{\prime}_n(\omega)=\lim\limits_{\varepsilon\to 0}\frac{\lambda_n(\omega+\varepsilon) - \lambda_n(\omega)}{\varepsilon} .$$  
At the endpoint $\omega=0$, both Gateaux derivatives are naturally defined by the right limit of the same difference quotients as $\varepsilon\to +0$. 

\medskip

\subsection{Generic behaviour of eigenvalues}
The following result was first obtained in a more general case in \cite{kurylev_lassas_weder}, see Lemma~3.1. We adjust their proof to our 1 dimensional case.
\begin{Proposition}\label{gbeig}
	\textit{For each $n\in \N$ the following formula holds:}
	$$\lambda^{\prime}_n(\omega) =-|\phi_{n,\omega}(0)|^2,\qquad \omega\in (0,1).$$
\end{Proposition}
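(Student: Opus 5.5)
The plan is a Hellmann--Feynman type computation, using the analyticity already established in Proposition~\ref{lem2}. Fix $n\in\N$ and $\omega\in(0,1)$, and write $\phi_\omega=\phi_{n,\omega}$ and $\lambda(\omega)=\lambda_n(\omega)$. By the weak formulation in Definition~\ref{defweak}, for every $v\in V$ we have
$$B_\omega(\phi_\omega,v)=\lambda(\omega)(\phi_\omega,v)_{L^2_c},\qquad B_\omega(\phi,v)=-\omega\phi(0)v(0)+\int_\Omega\partial_x\phi\,\partial_xv\,dx .$$
All functions here are real-valued, so the conjugations play no role.

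The first step is to differentiate this identity in $\omega$ with $v$ held fixed. By Proposition~\ref{lem2}, the map $\omega\mapsto\phi_\omega$ is differentiable in $V$ (Gateaux derivative in the $H^1$ norm) and $\omega\mapsto\lambda(\omega)$ is differentiable. Both $(\phi,v)\mapsto\int\partial_x\phi\,\partial_xv$ and $\phi\mapsto\phi(0)$ are bounded on $V$; the latter holds by the one-dimensional trace estimate $|\phi(0)|\lesssim\|\phi\|_{H^1}$. Writing $\dot\phi=\frac{d}{d\omega}\phi_\omega\in V$, the product rule gives, for all $v\in V$,
$$-\phi_\omega(0)v(0)+B_\omega(\dot\phi,v)=\lambda'(\omega)(\phi_\omega,v)_{L^2_c}+\lambda(\omega)(\dot\phi,v)_{L^2_c}.$$

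The second step is to test with $v=\phi_\omega$. Since $B_\omega$ is symmetric and $\phi_\omega$ is an eigenfunction, we have $B_\omega(\dot\phi,\phi_\omega)=B_\omega(\phi_\omega,\dot\phi)=\lambda(\omega)(\phi_\omega,\dot\phi)_{L^2_c}$. This term cancels the last term on the right, and the normalization $\|\phi_\omega\|_{L^2_c}=1$ then yields
$$\lambda'(\omega)=-\phi_\omega(0)^2=-|\phi_{n,\omega}(0)|^2 .$$

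A cross-check, which can also serve as an alternative route, is to differentiate $\lambda(\omega)=B_\omega(\phi_\omega,\phi_\omega)$ from the proof of Proposition~\ref{lem2}. The terms coming from $\dot\phi$ equal $2\lambda(\omega)(\phi_\omega,\dot\phi)_{L^2_c}$. This vanishes because differentiating $(\phi_\omega,\phi_\omega)_{L^2_c}=1$ gives $(\phi_\omega,\dot\phi)_{L^2_c}=0$, which leaves only the explicit derivative $-\phi_\omega(0)^2$.

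The only delicate point is justifying the differentiation in the first step. This requires the derivative of $\phi_\omega$ to exist in the $V$-topology, not merely in $L^2_c$, so that the boundary term $\phi_\omega(0)$ and the gradient term can be differentiated. The analyticity of Proposition~\ref{lem2} is obtained from the resolvent in $\mathscr{B}(V^*,V)$, with the projector in Proposition~\ref{lem1} analytic through that representation. I would therefore note explicitly that $\omega\mapsto\phi_\omega$ is analytic as a $V$-valued map. If that is not clear from the argument as written, the difference-quotient version of the same computation avoids it. Subtract the weak identities at $\omega+\varepsilon$ and $\omega$, test the first with $\phi_\omega$ and the second with $\phi_{\omega+\varepsilon}$, and use symmetry of $B$ to get
$$(\lambda(\omega+\varepsilon)-\lambda(\omega))(\phi_{\omega+\varepsilon},\phi_\omega)_{L^2_c}=-\varepsilon\,\phi_{\omega+\varepsilon}(0)\phi_\omega(0).$$
Dividing by $\varepsilon$ and letting $\varepsilon\to0$ requires only continuity of $\phi_\omega$ in $H^1$ (so that the traces converge) and in $L^2_c$, which gives the claim.
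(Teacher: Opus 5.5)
Your proof is correct and is the same Hellmann--Feynman computation as the paper's. The paper differentiates the tested identity $B_\omega(\phi_{n,\omega},\phi_{n,\omega})=\lambda_n(\omega)\|\phi_{n,\omega}\|^2_{L^2_c}$ and cancels the $\frac{d}{d\omega}\phi_{n,\omega}$ terms using the weak equation tested with $v=\frac{d}{d\omega}\phi_{n,\omega}$, which is equivalent to your differentiating with $v$ fixed and then testing with $\phi_{n,\omega}$. Your difference-quotient variant is a worthwhile refinement. It needs only continuity of $\omega\mapsto\phi_{n,\omega}$ in $H^1(\Omega)$, it yields the differentiability of $\lambda_n$ as a by-product, and it sidesteps the question of $V$-valued (rather than $L^2_c$-valued) differentiability that the paper's argument tacitly uses.
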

\begin{proof}
	Fix arbitrary $n\in \N$ and $\omega\in (0,1)$. The eigenfunction $\phi_{n,\omega}$ satisfies $(\mathcal{S}_{\omega,\lambda_n(\omega)})$ in the weak sense, 
	$$( \partial_x\phi_{n,\omega} , \partial_x v)_{L^2(\Omega)} - \omega \phi_{n,\omega}(0)v(0) = \lambda_n(\omega) (\phi_{n,\omega}, v)_{L^2_c},\quad v\in V.$$
	Note that $\frac{d}{d\omega}\phi_{n,\omega}\in V$, so taking $v=\frac{d}{d\omega}\phi_{n,\omega}$ turns the identity above into
	\begin{equation}\label{eq1212}
		\left( \partial_x\phi_{n,\omega} , \partial_x \frac{d\phi_{n,\omega}}{d\omega}\right)_{L^2(\Omega)}  - \omega \phi_{n,\omega}(0)\frac{d\phi_{n,\omega}}{d\omega}(0) = \lambda_n(\omega) \left(\phi_{n,\omega},\frac{d\phi_{n,\omega}}{d\omega}\right)_{L^2_c}.
	\end{equation}
	Moreover, since $\phi_{n,\omega}\in \widetilde{V}$, one can pick $v=\phi_{n,\omega}$, which gives
	\begin{equation}\label{eq111}
		\| \partial_x\phi_{n,\omega} \|^2_{L^2(\Omega)} - \omega |\phi_{n,\omega}(0)|^2 = \lambda_n(\omega) \|\phi_{n,\omega}\|^2_{L^2_c}.
	\end{equation}
	
	Now, let $b^\omega[v]=B_\omega(v,v)$ be a quadratic form on $V$; this form is bounded, since $B_\omega$ is bounded. Also, the quadratic form $q^\omega=\lambda_n(\omega)\|\cdot\|^2_{L^2_c}$ is bounded. We calculate the needed Gateaux derivatives; for any $h\in V$, $$D_h\left(b^\omega[v]\right) = 2(\partial_xv , \partial_x h)_{L^2(\Omega)}  -  2\omega v(0)h(0),$$ $$D_h\left(q^\omega[v]\right) = 2\lambda_n(\omega)(v,h)_{L^2_c}.$$ 
	Finally, the functionals $\omega\mapsto b^\omega[\phi_{n,\omega}]$ and $\omega \mapsto q^\omega[\phi_{n,\omega}]$ are analytic in $\omega$ as a composition of analytic functions due to Proposition~\ref{lem2}. The identity (\ref{eq111}) turns into $b^\omega[\phi_{n,\omega}]= q^\omega[\phi_{n,\omega}]$. Differentiate it in the Gateaux sense applying the chain rule,	
	$$2\left(\partial_x\phi_{n,\omega} , \partial_x \frac{d\phi_{n,\omega}}{d\omega}\right)_{L^2(\Omega)}  -  |\phi_{n,\omega}(0)|^2 - 2\omega \phi_{n,\omega}(0)\frac{d\phi_{n,\omega}}{d\omega}(0) =$$
	$$=\lambda^{\prime}_n(\omega) \|\phi_{n,\omega}\|^2_{L^2_c} + 2\lambda_n(\omega) \left(\phi_{n,\omega}, \frac{d\phi_{n,\omega}}{d\omega}\right)_{L^2_c} .$$
	Hence, upon combining the latter, identity (\ref{eq1212}) multiplied by $2$, and the fact that $\|\phi_{n,\omega}\|_{L^2_c}=1$, we obtain the required formula.
\end{proof}

Now, the unperturbed spectral problem $(\mathcal{S}_0)$ for $\omega=0$ is the standard Sturm-liouville problem with Neumann boundary condition, namely
$$(\mathcal{S})\; : \; \begin{cases}
	- c^2\, \partial_x^2 \phi = \lambda \phi , & \\
	\partial_x \phi(0) =0 , \; \phi(1)=0, & 
\end{cases}\text{ in }\Omega. $$
Recall that we omit the index $0$ in pairs $\{(\lambda_n(0),\phi_{n,0})\}_{n\in\N}$ and write simply $\{(\lambda_n,\phi_n)\}_{n\in \N}$. Taking the limit $\omega \to +0$ in Proposition~\ref{gbeig} entails that one can recover the squared boundary value of an eigenfunction of $(\mathcal{S})$ at $x=0$ by calculating the derivative at $\omega=0$ of the corresponding eigenvalue curve for the impedance spectral problems $(\mathcal{S}_{\omega})$. 

\begin{Corollary}\label{gbeig0}
	\textit{For any $n\in \N$, one has $|\phi_n(0)|^2=-\lambda^{\prime}_n(0)$.}
\end{Corollary}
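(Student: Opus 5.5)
The corollary will follow from Proposition~\ref{gbeig} by letting $\omega\to+0$. The only real issue is that Proposition~\ref{gbeig} is stated for $\omega\in(0,1)$, while $\lambda_n'(0)$ is defined as a one-sided limit of difference quotients at the endpoint. I would deal with this through the analyticity in Proposition~\ref{lem2}. That proposition gives analyticity of $\omega\mapsto\lambda_n(\omega)$ on $[0,1)$, meaning a convergent power series around $\omega_0=0$. Hence $\lambda_n$ is $C^1$ on $[0,1)$, the right derivative $\lambda_n'(0)$ exists, and
$$\lambda_n'(0)=\lim_{\omega\to+0}\lambda_n'(\omega).$$

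Next I would pass to the limit on the other side of the identity $\lambda_n'(\omega)=-|\phi_{n,\omega}(0)|^2$. By Proposition~\ref{lem2}, $\omega\mapsto\phi_{n,\omega}$ is analytic, hence continuous, as a $V$-valued map on $[0,1)$. This holds in the $H^1(\Omega)$ topology, which is the topology used for the Gateaux derivative. The trace $v\mapsto v(0)$ is a bounded functional on $V\subset H^1(\Omega)$ by the one-dimensional Sobolev embedding $H^1(\Omega)\hookrightarrow C(\overline\Omega)$. Therefore $\phi_{n,\omega}(0)\to\phi_{n,0}(0)=\phi_n(0)$ as $\omega\to+0$.

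Combining the two limits gives
$$\lambda_n'(0)=\lim_{\omega\to+0}\lambda_n'(\omega)=-\lim_{\omega\to+0}|\phi_{n,\omega}(0)|^2=-|\phi_n(0)|^2,$$
which is the claim.

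The main point to check is that the analyticity in Proposition~\ref{lem2} really holds up to the endpoint $\omega=0$, in a one-sided neighbourhood, and in the $V$-norm and not just in $L^2_c$. If only $L^2_c$ continuity were available, I would obtain $H^1$ convergence separately. The coercive forms $B_\omega$ give $\|\partial_x\phi_{n,\omega}\|^2_{L^2}=\lambda_n(\omega)+\omega|\phi_{n,\omega}(0)|^2$, so $\phi_{n,\omega}$ is bounded in $H^1$. Weak compactness then gives a subsequence converging weakly in $H^1$ and strongly in $C(\overline\Omega)$ (compact embedding in one dimension). Any such limit is an $L^2_c$-normalised eigenfunction for $\lambda_n(0)$ with the sign fixed by the normalisation $v(0)>0$, so by Lemma~\ref{uniqS} every subsequential limit equals $\phi_n$, and the point values converge.

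Alternatively, the proof of Proposition~\ref{gbeig} could be repeated verbatim at $\omega=0$ with right-sided derivatives. That argument uses only the differentiability of $\phi_{n,\omega}$ in $V$ and of $\lambda_n(\omega)$, both of which are available at $\omega=0$ as right derivatives.
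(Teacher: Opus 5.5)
Your proposal is correct and follows the paper's proof. Both use the analyticity from Proposition~\ref{lem2} to get continuity on $[0,1)$ of $\omega\mapsto\lambda_n'(\omega)$ and, via the bounded trace $v\mapsto v(0)$, of $\omega\mapsto\phi_{n,\omega}(0)$, and then carry the identity of Proposition~\ref{gbeig} from $(0,1)$ over to $\omega=0$; your extra check that $\phi_{n,\omega}$ is continuous in the $H^1$ topology, with the compactness fallback, usefully covers a point the paper passes over, since its proof of Proposition~\ref{lem2} derives analyticity through spectral projectors on $L^2_c$.
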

\begin{proof}
	Proposition~\ref{lem2} gives analyticity of eigenvalues and eigenfunctions. Since the trace operator $\gamma(v)=v(0)$, $v\in H^1(\Omega)$, is bounded, we get that both $\omega\mapsto\phi_{n,\omega}(0)$ and $\omega\mapsto\lambda_n^{\prime}(\omega)$ are continuous on $[0,1)$. They agree on $(0,1)$ by Proposition~\ref{gbeig}, hence also at $\omega=0$.
\end{proof}

\medskip

\section{Wave equation: forward problem}\label{wefpmain}
\subsection{Energy estimates and finite speed of propagation}\label{fpee}
Let $T>0$ and let $u\,:\,[0,T]\times\mathbb{R}\to\mathbb{R}$ and $f\in L^2(0,T)$. Consider the $(1+1)$-dimensional wave equation problem:
$$(\widetilde{\mathcal{W}}_f)\; : \; \begin{cases}
	\partial_t^2 u - c^2(x) \partial_x^2 u = 0, & \text{in }(0,T)\times\Omega,\\
	\partial_x u(t,0) = f(t),&t\in (0,T), \\
	u(t,1)=0,&t\in (0,T),\\
	u(0,x)=\partial_tu(0,x) = 0,& \text{in }\Omega.
\end{cases} $$
In this section, we show the energy estimates and the finite speed of propagation for the wave equation above. 

\medskip

To begin with, we note that we usually write $u(t)=u(t,\cdot)$, meaning that this is a function of the spatial variable. In the view of L. C. Evans' monograph~\cite{evans}, one can define a weak solution of the wave equation $(\widetilde{\mathcal{W}}_f)$ via the Gelfand triple (\ref{gelfand}). We say a function $u\in L^2(0,T;V)$ such that $\partial_tu\in L^2(0,T;L^2_c)$ and $\partial_t^2u\in L^2(0,T;V^*)$ is a weak solution of $(\mathcal{W}_f)$ provided that for almost every $t\in[0,T]$,
\begin{equation}\label{wswe}
	\langle\partial_t^2u(t), v\rangle + B(u(t),v)=-f(t)v(0),\qquad v\in V,
\end{equation}
with the initial conditions $u(0)=0$ and $\partial_tu(0)=0$ hold in $L^2_c$. Note that $B(v_1,v_2)=(\partial_xv_1,\partial_xv_2)_{L^2(\Omega)}$, $v_1,v_2\in V$; we take $\omega=0$ in Definition~\ref{defweak}. Furthermore,  $\partial_tu$ and $\partial_t^2u$ are understood as Sobolev time derivatives of $u$ in the Bochner sense, that is, as distributional derivatives in time that happen to be represented by an $L^2$ in time Banach-valued function.

By the Hilbert-Schmidt theorem, Theorem~\ref{spectralth}, and Lemma~\ref{uniqS}, the eigenfunctions $\{\phi_n\}_{n\in \N}\subset V$ of the spatial operator $-c^2(x)\partial_x^2$ form a complete orthonormal basis in $L^2_c$, because the sign convention at $0$ picks exactly one eigenfunction with $\phi_n(0)>0$. For each $N\in\N$ and almost every $t\in[0,T]$, the Galerkin approximations is defined by
$$u^N(t) = \sum_{n=1}^N a_n^N(t)\phi_n,$$ 
where for each $k\in\{1,\dots,N\}$ the coefficient functions $a_n^N(t)$ are chosen to satisfy 
$$(\partial_t^2u^N(t), \phi_k)_{L^2_c} + B(u^N(t),\phi_k)=-f(t)\phi_k(0),\quad \text{a.e. }t\in [0,T],$$
and $(u^N(0),\phi_k)_{L^2_c} = (\partial_tu^N(0),\phi_k)_{L^2_c}=0$. 

The classical PDE theory gives that for any $f\in L^2(0,T)$ there is a unique weak solution
$$u\in L^\infty(0,T;V)\cap H^1(0,T;L^2_c),\quad \partial_t^2u\in L^2(0,T;V^*),$$
of the wave equation $(\widetilde{\mathcal{W}}_f)$ satisfying
$$\|u\|_{L^\infty(0,T;V)}+\|u\|_{H^1(0,T;L^2_c)}+\|\partial_t^2u\|_{L^2(0,T;V^*)}\lesssim\|f\|_{L^2(0,T)}.$$
In particular, the solution operator $U\,:\, L^2(0,T)\to C([0,T];L^2_c)$, $f\mapsto u$, is a bounded linear operator. Well-posedness of the problem $(\widetilde{\mathcal{W}}_f)$ is a straightforward consequence of the latter. Moreover, it implies that the weak solution is stable in the sense that the solution operator $U$ continuously depends on the given data $f$.

Further, we improve regularity for the weak solution of $(\widetilde{\mathcal{W}}_f)$ and show that the weak solution is a strong solution. For this purpose, we define the subspace of Neumann data displaying the natural compatibility condition at $t=0$:
$$H^1_{\diamond}(0,T)=\{f\in H^1(0,T)\mid f(0)=0\}.$$
This is clearly a closed subspace of $H^1(0,T)$ as it is the kernel of the bounded trace operator evaluating at the origin in time. We say that a function $u\in H^2((0,T)\times\Omega)$ is a \textit{strong solution of} $(\widetilde{\mathcal{W}}_f)$ if the wave equation holds almost everywhere in the interior, and boundary and initial conditions hold in the trace sense. 

Let $f\in H^1_{\diamond}(0,T)$, then there is a unique strong solution $u\in H^2((0,T)\times\Omega)$ of the problem $(\widetilde{\mathcal{W}}_f)$ such that
$$\|u\|_{H^2((0,T)\times\Omega)}\lesssim\|f\|_{H^1(0,T)}.$$
In particular, the solution operator $U\,:\, H^1_{\diamond}(0,T)\to H^2((0,T)\times\Omega)$, $f\mapsto u$, is a bounded linear operator. Since for every $f\in H^1_{\diamond}(0,T)$ the strong solution exists and is unique, it coincides with the weak solution. Below we therefore omit mentioning which of the two notions is meant.

\begin{Remark}
Assume that the wave speed function $c$ belongs to $C^\infty(\R)$ and let the Neumann boundary data $f$ be a smooth function such that it is flat at the origin $t=0$:
$$f(0)=f'(0)=f''(0)=\dots=f^{(k)}(0)=\dots=0,\qquad k\in\N.$$
Following the classical higher-order regularity theory for second-order hyperbolic PDEs, see e.g., monograph \cite{evans} of L.~C.~Evans, one can show that the solution $u$ belongs to $C^\infty([0,T]\times\overline{\Omega})$. 
\end{Remark}

Finally, the finite speed of propagation holds. Recall some definitions from Subsection~\ref{potpamr}. The travel time distance is $\tau(x)=d(0,x)$. The domain of influence is defined by $\Omega(r)=\{x\in\overline{\Omega}:\tau(x)\leqslant r\}$, $r\in [0,T]$. Since $\tau$ is strictly increasing, its inverse $\chi\,:\,[0,\tau(1)]\to\overline{\Omega}$ is a strictly increasing $C^1$ function with $\chi(0)=0$. Differentiating $\tau(\chi(t))=t$ gives the characteristic equation $\chi'(t)=c(\chi(t))$. Let $f\in H(r)$. Then, the solution $u$ of $(\widetilde{\mathcal{W}}_f)$ satisfies 
$$\supp u(t)\subset\Omega(t-(T-r)),\qquad t\in [T-r,T].$$
In particular, the latter tells us that for any $r\in [0,T]$ the support of the solution $u$ of $(\widetilde{\mathcal{W}}_f)$, $\supp f\subset [T-r,T]$, at level $t=T$ is contained in $\Omega(r)$. This is schematically illustrated in Figure~\ref{fig:figure1}, where the light-grey shaded region represents the corresponding domain of influence.
\begin{figure}[H]
	\centering	
	\begin{tikzpicture}[scale=0.45]
		\fill[gray!20] (0,-2) -- (4,3) -- (0,3) -- cycle;
		\draw[->] (-0.5,-4) -- (8,-4) node[right] {$x$};
		\draw[->] (0,-4.5) -- (0,4) node[above] {$t$};
		\node [below left] at (0,-4) {$(0,0)$};
		\draw [] (6.75,3) -- (-0.25, 3) node [left]  {$t=T$};
		\draw [] (0.25, -2) -- (-0.25,-2) node [left] {$t=T-r$};
		\draw [dashed] (4,3) -- (4, -4.25) node [below]  {$x=\chi(r)$};
		\draw [] (6.75,3.5) -- (6.75, -4.25) node [below]  {$x=1$};
		\draw [dashed] (0,-2) -- (4, 3) ;
		\draw[decorate, decoration={brace}] 
		(0,3) -- (4,3) node[midway, above=1pt] {$\Omega(r)$};
	\end{tikzpicture}
	\caption{Illustration of the finite speed of propagation of waves for Neumann boundary data $f$ supported in $[T-r,T]$.}
	\label{fig:figure1}
\end{figure}

The geometry is purely schematic and is not meant to suggest constant propagation speed. In general, the resulting domain of influence need not be linear in time and may exhibit curved boundaries depending on the underlying wave operator.

\medskip

\subsection{Representation of the Neumann-to-Dirichlet map}\label{gboep2}
The Neumann-to-Dirichlet map $\Lambda\,:\,L^2(0,T)\to L^2(0,T)$ is defined by
$$\Lambda\,:\, f \mapsto u|_{x=0},\qquad f\in L^2(0,T),$$
where $u$ is the unique solution of the wave equation $(\widetilde{\mathcal{W}}_f)$. Taking into account well-posedness of the wave equation with boundary control yields that the Neumann-to-Dirichlet map $\Lambda$ is bounded. 

We now prove the representation formula for $\Lambda$ from Proposition~\ref{1mainres}, which is essentially a combination of Lemma~3.6 from the monograph \cite{katchalov_kurylev_lassas} and Corollary~\ref{gbeig0}. Recall that we want to show the following identity:
\begin{equation}\label{NDreprl}
	\Lambda[f] = \sum_{n\in \N}\lambda_n^\prime(0)\, S_{\lambda_n(0)}[f] , \qquad f\in L^2(0,T) .
\end{equation}
\begin{proof}
	Recall that eigenfunctions $\{\phi_n\}_{n\in\N}$ form a complete orthonormal basis in $L^2_c$. We claim that the coefficients $a_k^N(t)$ of the Galerkin approximations $u^N$ approximate the Fourier coefficients of $u$. Indeed, $u^N$ tends to $u$ in $C([0,T];L^2_c)$, and, therefore, for a fixed $k\in\{1,\dots,N\}$ the identity 
	$$(u^N(t),\phi_k)_{L^2_c}=\sum_{n=1}^N a_n^N(t)(\phi_n,\phi_k)_{L^2_c}=a_k^N(t),\qquad t\in [0,T],$$
	implies that $a_k^N(t)\to (u(t),\phi_k)_{L^2_c}$ as $N\to +\infty$ for any $t\in [0,T]$. Then, the Fourier series expansion of $u(t)$ for a fixed $t>0$ is given by
	$$u(t)=\sum_{n\in \N} u_n(t)\,\phi_n,\quad \text{with }\; u_n(t)=(u(t),\phi_n)_{L^2_c}. $$
	For any $t\in[0,T]$ the series converges in $L^2_c$. Moreover, the energy estimate of Section~\ref{fpee} applies to the Galerkin approximations $u_N(t)=\sum_{n\leqslant N}u_n(t)\phi_n$ uniformly in $N$, so $\|u_N(t)\|_{H^1(\Omega)}\lesssim\|f\|_{L^2(0,T)}$ for almost every $t\in[0,T]$. Together with the convergence in $L^2_c$ this gives $u_N(t)\rightharpoonup u(t)$ weakly in $H^1(\Omega)$ for almost every $t\in[0,T]$.
	
	Recall that the Dirac delta $\delta_0$ defined by $\langle\delta_0,v\rangle=v(0)$ is bounded in $V$. By~the definition of $\Lambda$ we see that $\Lambda[f](t)=\langle\delta_0,u(t)\rangle$, $f\in L^2(0,T)$. Since $u_N(t)\rightharpoonup u(t)$ weakly in $H^1(\Omega)$ and $u_N(t)\in V$ for every $N$, the weak limit stays in~$V$, and therefore
	$\langle\delta_0,u_N(t)\rangle\to\langle\delta_0,u(t)\rangle$ for a.e. $t\in[0,T]$. Thus, $\delta_0$ may be passed through the infinite sum, which gives
	\begin{equation}\label{NDrepr}
		\Lambda[f](t) = \sum_{n\in \N} u_n(t)\,\phi_n(0),\qquad \text{a.e. }t\in [0,T].
	\end{equation}
	
	Fix arbitrary $n\in\N$ and suppose $f\in H^1_{\diamond}(0,T)$. Section \ref{fpee} implies that $u_n\in H^2(0,T)$. By the definition of the weak solutions, the Fourier coefficients must obey
	$$\partial_t^2u_n(t)+ \lambda_n(0) u_n(t) = -f(t)\phi_n(0),\quad \text{a.e. }t\in [0,T],$$
	with the initial conditions $u_n(0)=\partial_tu_n(0)=0$. Whence, the standard ODE theory gives
	$$u_n(t) = -\phi_n(0)\int\limits_0^t  \frac{\sin(\tau\sqrt{\lambda_n(0)})}{\sqrt{\lambda_n(0)}}f(t-\tau)\,d\tau ,\qquad \text{a.e. }t\in [0,T].$$
	Substituting $u_n(t)$ in formula (\ref{NDrepr}) yields
	\begin{equation*}\label{lamh1d}
		\Lambda[f](t) = -\sum_{n\in \N}|\phi_n(0)|^2\, S_{\lambda_n(0)}[f](t) ,\qquad \text{a.e. }t\in [0,T],
	\end{equation*}
	and Corollary~\ref{gbeig0} entails the required formula~(\ref{NDreprl}).

	Finally, on $H^1_\diamond(0,T)$ the operator $\tilde{\Lambda}\,:\,f\mapsto\sum_{n\in\N}\lambda_n'(0)\, S_{\lambda_n(0)}[f]$ coincides with $\Lambda$ by the identity (\ref{NDreprl}) proved above for $H^1_\diamond(0,T)$ functions. It is therefore bounded with respect to $L^2(0,T)$ norm, that is, $\|\tilde{\Lambda}[f]\|_{L^2(0,T)}\lesssim\|f\|_{L^2(0,T)}$, $f\in H^1_\diamond(0,T)$. Since $H^1_\diamond(0,T)$ is dense in $L^2(0,T)$, it extends uniquely to an operator in $\mathscr{B}(L^2(0,T))$, and the extension is $\Lambda$. Indeed, given $f\in L^2(0,T)$, choose $\{f_k\}_{k\in\N}\subset H^1_\diamond(0,T)$ with $f_k\to f$ in $L^2(0,T)$; then
	$$\tilde{\Lambda}[f_k]=\sum_{n\in\N}\lambda_n'(0)\,S_{\lambda_n(0)}[f_k]=\Lambda[f_k]\to\Lambda[f] \qquad\text{in }L^2(0,T),$$
	both sides converging because $\Lambda$ is bounded. This concludes the proof.
\end{proof}

\medskip

\section{The Inverse Problem}\label{premip}	
\subsection{Preliminaries}\label{premip1}
Recall that $\Lambda$ is the Neumann-to-Dirichlet map. The Blagovestchenskii operators $B,J,R,K\in\mathscr{B}(L^2(0,2T))$ are defined by
$$Bf(t)=-\mathds{1}_{(0,T)}(t)\int\limits_t^T f(s)\,ds, \quad Jf(t)=\frac{1}{2}\,\mathds{1}_{(0,T)}(t)\int\limits_t^{2T-t} f(s)\,ds,$$
$$Rf(t)=f(2T-t),\quad	K=R\Lambda R J - J\Lambda.$$
We note that $K$ is compact. Indeed, the time-reversal operator \(R\) is an isometry. Also, \(J:L^2(0,2T)\to H^1(0,2T)\) is bounded, because the function $Jf$ vanishes on $[T,2T)$, is continuous at $t=T$ since $Jf(T)=0$, and satisfies 
\[
\partial_t (Jf)(t)=-\frac12 f(2T-t)-\frac12 f(t),\qquad t\in (0,T);
\]
whence $\|Jf\|_{H^1(0,2T)}\leqslant C_T\|f\|_{L^2(0,2T)}$ with $C_T>0$ depending only on $T$. The compactness of \(J:L^2(0,2T)\to L^2(0,2T)\) then follows from the compact
embedding \(H^1(0,2T)\hookrightarrow L^2(0,2T)\). Finally, since the Neumann-to-Dirichlet map \(\Lambda\) is bounded and the Blagove\v{s}\v{c}enski\u{\i} operator $K$ is defined by \(K=R\Lambda RJ-J\Lambda\), we obtain that \(K\) is compact.

Using the Blagove\v{s}\v{c}enski\u{\i} operators, we formulate the Blagove\v{s}\v{c}enski\u{\i} identities. These identities are a basic mechanism of the boundary control method: they allow one to compute inner products of interior wave fields in the geometry determined by $c(x)$, namely in $L^2_c$, using only boundary-time operators acting on the prescribed boundary inputs. Recall that given a Neumann boundary input $f$, we write $u^f(t)=u^f(t,\cdot)$ for the spatial profile of the solution of $(\mathcal{W}_f)$ at time $t\in [0,2T]$. 

\begin{Theorem}[The Blagove\v{s}\v{c}enski\u{i} identities]\label{blagid}
\textit{Let $T>0$. Then}
$$(u^f(T),u^h(T))_{L^2_c}=(f,Kh)_{L^2(0,2T)},\qquad f,h\in L^2(0,2T).$$
\textit{Moreover, if $0\leqslant r\leqslant\min\{\tau(1),T\}$, then}
$$(u^f(T),1)_{L^2_c} = (f, B1)_{L^2(0,2T)},\quad f\in H^1(0,2T),\, \mathrm{supp}f\subset [T-r,T].$$
\end{Theorem}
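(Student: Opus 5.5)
We prove the first identity by the standard Blagoveščenskiĭ argument: a function of two time variables satisfies a one-dimensional wave equation whose source is expressed through $\Lambda$ only. By density and the boundedness of $K$ and of $f\mapsto u^f(T)$ in $L^2_c$ (Section~\ref{fpee}), it suffices to take $f,h$ smooth and compactly supported in $(0,2T)$. For these the solutions are strong, and we may integrate by parts freely. Set
$$W(t,s)=(u^f(t),u^h(s))_{L^2_c},\qquad t,s\in[0,2T].$$
Using $\partial_t^2u=c^2\partial_x^2u$ and the weight $c^{-2}$ in $L^2_c$, we get
$$(\partial_t^2-\partial_s^2)W(t,s)=\int_\Omega\bigl(\partial_x^2u^f(t)\,u^h(s)-u^f(t)\,\partial_x^2u^h(s)\bigr)dx .$$
Integrating by parts twice, the boundary terms at $x=1$ vanish because of the Dirichlet condition. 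The terms at $x=0$ give
$$(\partial_t^2-\partial_s^2)W(t,s)=-f(t)\,\Lambda h(s)+\Lambda f(t)\,h(s)=:F(t,s).$$
The initial conditions are $W(0,s)=\partial_tW(0,s)=0$, which follow from $u^f(0)=\partial_tu^f(0)=0$.

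We then solve this 1D inhomogeneous wave equation by d'Alembert's formula and evaluate at $t=s=T$:
$$W(T,T)=\frac12\int_0^T\int_{t}^{2T-t}F(t,s)\,ds\,dt .$$
The integration region is the backward characteristic triangle from $(T,T)$, namely $\{0\le t\le T,\ |s-T|\le T-t\}$. Substituting $F$ yields
$$\int_0^{2T}\bigl(\Lambda f(t)\,(Jh)(t)-f(t)\,(J\Lambda h)(t)\bigr)dt=(f,(\Lambda^*J-J\Lambda)h)_{L^2(0,2T)}.$$
It remains to show $\Lambda^*=R\Lambda R$ on $L^2(0,2T)$. This is the time-reversal symmetry of the ND map, and it is the main step needing care. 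It follows from the same kind of computation: apply Green's formula on $(0,2T)\times\Omega$ to $u^f(t)$ and $u^{h}(2T-t)$. The latter solves the wave equation with zero final data, and the resulting boundary pairing gives $(\Lambda f,Rh)=(f,R\Lambda h)$. We expect this symmetry and the precise handling of the time domain $(0,2T)$ versus the support of $J$ to be the main obstacle. The convergence of the d'Alembert representation for the merely $L^2$ source $F$ is handled by the density argument above.

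For the second identity, take $v\equiv1$. Since $1\notin V$, this must be handled directly. Set $w(t)=(u^f(t),1)_{L^2_c}$. Then
$$w''(t)=\int_\Omega\partial_x^2u^f\,dx=\partial_xu^f(t,1)-f(t).$$
Finite speed of propagation (Section~\ref{fpee}) with $\supp f\subset[T-r,T]$ and $r\le\tau(1)$ gives $\supp u^f(t)\subset\Omega(t-(T-r))$ for $t\le T$. So the wave has not reached $x=1$ before time $T$ (in the borderline case $r=\tau(1)$, only at $t=T$), and $\partial_xu^f(t,1)=0$ for $t<T$. With $w(0)=w'(0)=0$ we obtain $w(T)=-\int_0^T(T-t)f(t)\,dt$. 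On the other side, Fubini gives $(f,B1)=-\int_0^Tf(t)(T-t)\,dt$, which is the same quantity, completing the proof.
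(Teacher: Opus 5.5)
Your proposal is correct and follows the paper's own argument. The two-time function $W(t,s)=(u^f(t),u^h(s))_{L^2_c}$ solves a $(1+1)$-dimensional wave equation with source $\Lambda f(t)h(s)-f(t)\Lambda h(s)$, and d'Alembert's formula at $(T,T)$ integrates this source over the characteristic triangle that forms the kernel of $J$. The second identity comes from $w''=\partial_xu^f(t,1)-f(t)$, with the boundary term removed by finite speed of propagation for $r\leqslant\tau(1)$.

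The paper hides one step in ``collecting terms'': the time-reversal symmetry $\Lambda^*=R\Lambda R$, which turns $(\Lambda f,Jh)$ into $(f,R\Lambda RJh)$. You spell this out, and your Green's-formula argument with $u^f(t)$ and $u^h(2T-t)$ proves it correctly. It also follows at once from the convolution structure of $\Lambda$ in Proposition~\ref{1mainres}.
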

\begin{proof}[Sketch of proof]
The first identity is classical, see \cite{blag2,blag1}; we indicate the argument in the present setting. Let $w(t,s)=(u^f(t),u^h(s))_{L^2_c}$. Two integrations by parts in $x$, using $u^f(t,1)=u^h(s,1)=0$ and the Neumann conditions at $x=0$, give
$$(\partial_t^2-\partial_s^2)w=\Lambda[f](t)h(s)-f(t)\Lambda[h](s),\qquad t,s>0,$$
with $w=\partial_tw=0$ on $\{t=0\}$ and $w=\partial_sw=0$ on $\{s=0\}$. Solving this $(1+1)$-dimensional wave equation by d'Alembert and evaluating at $(T,T)$ integrates the right-hand side over the characteristic triangle $\{(t,s):s>t>0,\ t+s<2T\}$, which is precisely the kernel of $J$; collecting terms yields $K=R\Lambda RJ-J\Lambda$.

For the second identity, set $F(t)=(u^f(t),1)_{L^2_c}$. Integrating the equation~in~$x$,
$$F''(t)=\int\limits_0^1\partial_x^2u^f(t,x)\,dx=\partial_xu^f(t,1)-f(t).$$
Let $f\in H(r)$ with $r\leqslant\min\{\tau(1),T\}$; the bound $r\leqslant T$ ensures $[T-r,T]\subset[0,T]$, and the bound $r\leqslant\tau(1)$ is used below. By the finite speed of propagation of Section~\ref{fpee}, $u^f(s,\cdot)$ is supported in $\Omega(s-T+r)$ for $s\geqslant T-r$, and $u^f(s,\cdot)\equiv 0$ for $s\leqslant T-r$. Since $\Omega(r')=[0,\chi(r')]$ and $\chi$ is strictly increasing, for $s<T$ we have $s-T+r<r\leqslant\tau(1)$, hence $\chi(s-T+r)<1$ and $u^f(s,\cdot)$ vanishes on a neighbourhood of $x=1$. That is, the wave has not yet reached the endpoint $x=1$. In particular, $\partial_xu^f(s,1)=0$ for every $s<T$. Hence $F''=-f$ on $(0,T)$, and $F(0)=F'(0)=0$ gives
$$F(T)=-\int\limits_0^T(T-s)f(s)\,ds=(f,B1)_{L^2(0,2T)} .$$
\end{proof}

In the half-line model of \cite{korpela_lassas_oksanen} the boundary term  $\partial_xu^f(t,x)$ vanishes, since $u^f(t,\cdot)$ is compactly supported for every finite $t$; on the interval it is $\partial_xu^f(t,1)$, which is nonzero once the wave has reached the endpoint. The restriction $r\leqslant\tau(1)$ is caused by the Dirichlet condition at $x=1$: it is the exact condition under which the endpoint has not influenced the solution by the observation time $T$. 

Finally, we remark that the operator $K$ is self-adjoint and positive semidefinite on $L^2(0,2T)$. Indeed, the first Blagove\v{s}\v{c}enski\u{\i} identity implies $K=K^*$. Taking $h=f$ in the identity gives $(f,Kf)_{L^2(0,2T)}=\|u^f(T)\|^2_{L^2_c}\geqslant 0$.

\medskip

\subsection{Reconstruction of wave speed function}
In this section, we construct boundary sources $f$ that allows us to approximate domains of influence $\Omega(r)$ corresponding to a given wave speed function $c(x)$. For this purpose, we improve the regularization strategy proposed in 2016 in~\cite{korpela_lassas_oksanen}, see Theorem~4.

Let $0\leqslant r\leqslant T$ and recall that the spaces of boundary controls localized in time are denoted by
$$H(r)=L(r)\cap H^1(0,2T),\quad L(r)=\{f\in L^2(0,2T):\mathrm{supp}f\subset [T-r,T]\}.$$ 
For $\alpha,\beta >0$, we introduce the regularization norm $\|\cdot\|_{\alpha,\beta}$ defined by setting
$$\|f\|^2_{\alpha,\beta}=\alpha\|f\|^2_{L^2(0,2T)}+\beta|f|^2_{H^1(0,2T)},\qquad f\in H(r),$$ 
where $|f|_{H^1(0,2T)} = \|\partial_t f\|_{L^2(0,2T)}$. Clearly, the norm $\|\cdot\|_{\alpha,\beta}$ is equivalent to $\|\cdot\|_{H^1(0,2T)}$. Recall that the stiffness operator $S:H(r)\to (H(r))^*$ is defined~by
\begin{equation*}
	\langle S f, h\rangle =(\partial_t f,\partial_t h)_{L^2(0,2T)},\qquad f,h\in H(r),
\end{equation*}
and the orthogonal projector \(\mathcal{P}_r:L^2(0,2T)\to L^2(0,2T)\) is defined by $\mathcal{P}_r[f]=\mathds{1}_{[T-r,T]}f$, where \(\mathds{1}_{[T-r,T]}\) is the indicator function of \([T-r,T]\). Here the operator $(\mathcal{P}_rK\mathcal{P}_r+\alpha I+\beta S)^{-1}:(H(r))^*\to H(r)$, and $\mathcal{P}_rB1$ is regarded as an element of $(H(r))^*$ through the embedding $L^2(0,2T)\hookrightarrow (H(r))^*$.

For a functional $E:H(r)\to\R$, $f,h\in H(r)$, the directional derivative of $E$ at $f$ in the direction~$h$ is
$$D_hE(f)=\frac{d}{ds}E(f+sh)\Big|_{s=0}=\lim_{s\to0}\frac{E(f+sh)-E(f)}{s},$$
whenever the limit exists. If $f$ is a minimizer of $E$ and $D_hE(f)$ exists, then $D_hE(f)=0$, since $s\mapsto E(f+sh)$ attains its minimum at $s=0$.

Now, we are ready to prove Proposition~\ref{2mainres}, which is the second result of the paper. For convenience, we restate it.

\medskip
\noindent\textbf{Proposition~\ref{2mainres}.}
\textit{Let $0\leqslant r\leqslant \tau(1)$ and $\alpha,\beta >0$. Then the regularized optimization problem
\[
\min_{f\in H(r)}\left(\|u^f(T)-\mathds{1}_{\Omega(r)}\|^2_{L^2_c}+\|f\|^2_{\alpha,\beta}\right)
\]
has a unique minimizer
\[
f_{\alpha,\beta,r} = (\mathcal{P}_rK\mathcal{P}_r+\alpha I+ \beta S)^{-1}\mathcal{P}_rB1.
\]
Moreover, the map $r\mapsto f_{\alpha,\beta,r}$ belongs to $C([0, \tau(1)];L^2(0,2T))$, and as $\alpha\to +0$ and $\beta\to +0$ one has uniform convergence
\[
\lim_{\alpha,\beta\to +0}\,\sup_{r\in [0, \tau(1)]}\left\|u^{f_{\alpha,\beta,r}}(T)- \mathds{1}_{\Omega(r)}\right\|_{L^2_c} = 0.
\]}

\begin{proof}
\smallskip
\noindent\textit{Step 1: existence of a minimizer.}
Let $f\in H(r)$. Define the energy functional
$$E(f) = (f,Kf)_{L^2(0,2T)} - 2(f,B1)_{L^2(0,2T)}+\|f\|^2_{\alpha,\beta}.$$
By the finite speed of propagation, the solution $u^f$ to $(\mathcal{W}_f)$ is such that $u^f(T)$ is supported in $\Omega(r)$, see Section \ref{fpee}. Using the Blagove\v{s}\v{c}enski\u{\i} identities, Theorem~\ref{blagid}, we get
$$E(f)=\|u^f(T)-\mathds{1}_{\Omega(r)}\|^2_{L^2_c} - \|\mathds{1}_{\Omega(r)}\|^2_{L^2_c} +\|f\|^2_{\alpha,\beta}.$$
Pick a minimizing sequence $\{f_k\}_{k\in\N}\subset H(r)$, that is, $E(f_k)\to\inf\{E(f):f\in H(r)\}$ as $k\to +\infty$. This implies
$$\|f_k\|^2_{\alpha,\beta}\leqslant E(f_k) + \|\mathds{1}_{\Omega(r)}\|^2_{L^2_c},\qquad k\in\N,$$
so the sequence $\{f_k\}_{k\in\N}$ is bounded in $H(r)$. Passing to a subsequence, still denoted by $\{f_k\}_{k\in\N}$, we may assume that $f_k\rightharpoonup f_\infty$ weakly in $H(r)$ for some $f_\infty\in H(r)$.

Due to the energy estimate, the map $U_T : H^1(0,2T) \to V$, $U_T g = u^g(T)$, is bounded, since $H^1(0,2T)\hookrightarrow L^2(0,2T)$ is continuous. Recall that the embedding $V\hookrightarrow L^2_c$ is compact and, therefore, $U_T\,:\,H^1(0,2T)\to L^2_c$ is compact. Hence, passing to a further subsequence, still denoted by $\{f_k\}_{k\in\N}$, we have $u^{f_k}(T)\to u^{f_\infty}(T)$ in $L^2_c$ as $k\to +\infty$, and we get
$$E(f_\infty) = \lim\limits_{k\to +\infty}\|u^{f_k}(T)-\mathds{1}_{\Omega(r)}\|^2_{L^2_c} - \|\mathds{1}_{\Omega(r)}\|^2_{L^2_c} +\|f_\infty\|^2_{\alpha,\beta}.$$
The weak lower semicontinuity $\|f_\infty\|_{\alpha,\beta}\leqslant  \liminf\limits_{k\to +\infty}\|f_k\|_{\alpha,\beta}$ implies that
$$E(f_\infty)\leqslant\liminf\limits_{k\to +\infty} E(f_k)= \inf \{E(f):f\in H(r)\}.$$
This yields that $f_\infty$ is a minimizer of $E(f)$.	
	
	\smallskip
\noindent\textit{Step 2: uniqueness and the explicit formula.}
Let $f$ be a minimizer of $E$ and $h\in H(r)$. Since $E$ is the sum of a continuous quadratic form and a bounded linear functional on $H(r)$, the map $s\mapsto E(f+sh)$ is a polynomial of degree two in $s$, so $D_hE(f)$ exists and vanishes. Using $\mathcal{P}_rf=f$ and $\mathcal{P}_rh=h$, and the symmetry of $K$ given by the Blagove\v{s}\v{c}enski\u{\i} identities, a direct computation gives that $\tfrac12D_hE(f)$ equals
$$(h,\mathcal{P}_rK\mathcal{P}_rf)_{L^2(0,2T)}-(h,\mathcal{P}_rB1)_{L^2(0,2T)}+\alpha(h,f)_{L^2(0,2T)}+\beta\langle Sh,f\rangle,$$
so that $f$ solves $(\mathcal{P}_rK\mathcal{P}_r+\alpha I+\beta S)f=\mathcal{P}_rB1$ in $(H(r))^*$. The operator $\mathcal{P}_rK\mathcal{P}_r+\alpha I+\beta S:H(r)\to (H(r))^*$ is coercive, since positive semidefiniteness of $K$ yields
\begin{align*}
(K\mathcal{P}_rf,\mathcal{P}_rf)_{L^2(0,2T)}&+\alpha\|f\|^2_{L^2(0,2T)}+\beta\langle Sf,f\rangle \\
&\geqslant\alpha\|f\|^2_{L^2(0,2T)}+\beta|f|^2_{H^1(0,2T)}\geqslant\min\{\alpha,\beta\}\|f\|^2_{H^1(0,2T)} .
\end{align*}
By the Lax--Milgram theorem it is invertible. Hence every minimizer equals $(\mathcal{P}_rK\mathcal{P}_r+\alpha I+\beta S)^{-1}\mathcal{P}_rB1$, and, by Step~1, this is the unique minimizer~$f_{\alpha,\beta,r}=f_\infty$.
	
	\smallskip
\noindent\textit{Step 3: continuity in $r$.}
For $r\in[0,\tau(1)]$ and $f\in H(r)$ write
$$F_r(f)=\|u^f(T)-\mathds{1}_{\Omega(r)}\|^2_{L^2_c}+\|f\|^2_{\alpha,\beta}=E(f)+\|\mathds{1}_{\Omega(r)}\|^2_{L^2_c},$$ 
$$m_{\alpha,\beta}(r)=\min_{f\in H(r)}F_r(f)=F_r(f_{\alpha,\beta,r}) .$$
We show that $r\mapsto f_{\alpha,\beta,r}$ is continuous from $[0,\tau(1)]$ to $H^1(0,2T)$ and that $r\mapsto m_{\alpha,\beta}(r)$ is continuous on $[0,T]$; in particular, $(r\mapsto f_{\alpha,\beta,r})\in C([0,\tau(1)];L^2(0,2T))$. Note that $r\mapsto\mathds{1}_{\Omega(r)}$ is continuous from $[0,\tau(1)]$ to $L^2_c$, since $\Omega(r)=[0,\chi(r)]$ and $\chi$ is continuous.

Fix $r\in[0,\tau(1)]$ and a sequence $r_k\to r$ in $[0,\tau(1)]$; write $f_k=f_{\alpha,\beta,r_k}$ and $f_r=f_{\alpha,\beta,r}$. We first show $\limsup_km_{\alpha,\beta}(r_k)\leqslant m_{\alpha,\beta}(r)$. If $r_k\geqslant r$, then $H(r)\subset H(r_k)$ and we set $\tilde f_k=f_r$. If $r_k<r$, we set
$$\tilde f_k(t)=f_r\Big(T-\frac{r}{r_k}(T-t)\Big)\ \text{ for } t\in[T-r_k,T],\qquad\tilde f_k(t)=0\ \text{ otherwise};$$
the dilation maps $[T-r_k,T]$ onto $[T-r,T]$, so $\tilde f_k\in H(r_k)$. In both cases $\tilde f_k\to f_r$ in $H^1(0,2T)$ by continuity of dilations in $H^1$, hence $u^{\tilde f_k}(T)\to u^{f_r}(T)$ in $L^2_c$ by the energy estimate, and
$$m_{\alpha,\beta}(r_k)\leqslant F_{r_k}(\tilde f_k)\to F_r(f_r)=m_{\alpha,\beta}(r) .$$

Next, $\|f_k\|^2_{\alpha,\beta}\leqslant m_{\alpha,\beta}(r_k)\leqslant\|\mathds{1}_{\Omega(r_k)}\|^2_{L^2_c}$, so $\{f_k\}$ is bounded in $H^1(0,2T)$. Let $\bar f$ be the weak limit in $H^1(0,2T)$ of a subsequence, still denoted $\{f_k\}$. Since $\operatorname{supp}f_k\subset[T-r_k,T]$ and $r_k\to r$, we have $\operatorname{supp}\bar f\subset[T-r,T]$, that is, $\bar f\in H(r)$. By compactness of $U_T:H^1(0,2T)\to L^2_c$, $u^{f_k}(T)\to u^{\bar f}(T)$ in $L^2_c$, and by weak lower semicontinuity of $\|\cdot\|_{\alpha,\beta}$,
$$m_{\alpha,\beta}(r)\leqslant F_r(\bar f)\leqslant\liminf_kF_{r_k}(f_k)=\liminf_km_{\alpha,\beta}(r_k) .$$
Together with the previous paragraph, $m_{\alpha,\beta}(r_k)\to m_{\alpha,\beta}(r)$ and $F_r(\bar f)=m_{\alpha,\beta}(r)$, so $\bar f=f_r$ by uniqueness of the minimizer. Moreover, $\|u^{f_k}(T)-\mathds{1}_{\Omega(r_k)}\|^2_{L^2_c}\to\|u^{f_r}(T)-\mathds{1}_{\Omega(r)}\|^2_{L^2_c}$, so $F_{r_k}(f_k)\to F_r(f_r)$ gives $\|f_k\|_{\alpha,\beta}\to\|f_r\|_{\alpha,\beta}$; weak convergence together with convergence of norms yields $f_k\to f_r$ in $H^1(0,2T)$. As the limit does not depend on the subsequence, the whole sequence converges, which proves the claim.
	
	\smallskip
\noindent\textit{Step 4: convergence for fixed $r$.}
Since $\|u^{f_{\alpha,\beta,r}}(T)-\mathds{1}_{\Omega(r)}\|^2_{L^2_c}\leqslant m_{\alpha,\beta}(r)$, it suffices to show that $m_{\alpha,\beta}(r)\to0$ as $\alpha,\beta\to+0$ for each fixed $r\in[0,\tau(1)]$.

Note that $\{u^f(T):f\in H(r)\}$ is dense in $L^2(\Omega(r))$. Indeed, according to the proof of Theorem 4 in \cite{korpela_lassas_oksanen}, $\{u^f(T):f\in L(r)\}$ is dense in $L^2(\Omega(r))$, see also \cite{katchalov_kurylev_lassas} and \cite{tataru}. Since $H(r)$ is dense in $L(r)$ with respect to $L^2(0,2T)$, the image $U_T(H(r))$ is dense in $U_T(L(r))$, which is dense in $L^2(\Omega(r))$. Now, fix $\varepsilon>0$ and choose $f_\varepsilon\in H(r)$, $f_\varepsilon\neq0$, such that
$$\|u^{f_\varepsilon}(T)-\mathds{1}_{\Omega(r)}\|^2_{L^2_c}<\frac{\varepsilon^2}{3} .$$
Taking $\alpha$ and $\beta$ such that $0<\alpha<\|f_\varepsilon\|^{-2}_{L^2(0,2T)}\frac{\varepsilon^2}{3}$ and $0<\beta<|f_\varepsilon|^{-2}_{H^1(0,2T)}\frac{\varepsilon^2}{3}$, we obtain by minimality
\begin{align*}
m_{\alpha,\beta}(r)\leqslant F_r(f_\varepsilon)&=\|u^{f_\varepsilon}(T)-\mathds{1}_{\Omega(r)}\|^2_{L^2_c}+\alpha\|f_\varepsilon\|^2_{L^2(0,2T)}+\beta|f_\varepsilon|^2_{H^1(0,2T)}<\varepsilon^2 .
\end{align*}
Since $\varepsilon>0$ is arbitrary, $m_{\alpha,\beta}(r)\to0$ as $\alpha,\beta\to+0$.

\smallskip
\noindent\textit{Step 5: uniform convergence in $r$.}
By Step~4, it suffices to show $\sup_{r\in[0,\tau(1)]}m_{\alpha,\beta}(r)\to0$ as $\alpha,\beta\to+0$. We use three properties of $m_{\alpha,\beta}$. First, if $\alpha\leqslant\alpha'$ and $\beta\leqslant\beta'$, then $\|f\|^2_{\alpha,\beta}\leqslant\|f\|^2_{\alpha',\beta'}$ for every $f$, hence $m_{\alpha,\beta}(r)\leqslant m_{\alpha',\beta'}(r)$ for every $r\in[0,\tau(1)]$. Second, $m_{\alpha,\beta}(r)\to0$ as $\alpha,\beta\to+0$ for each fixed $r\in[0,\tau(1)]$ by Step~4. Third, $r\mapsto m_{\alpha,\beta}(r)$ is continuous on $[0,\tau(1)]$ for fixed $\alpha,\beta>0$ by Step~3.

Now let $\alpha_k=\beta_k=\frac1k$, $k\in\N$. By the three properties, $\{m_{\alpha_k,\beta_k}\}_{k\in\N}$ is a nonincreasing sequence of continuous functions on the compact interval $[0,\tau(1)]$ converging pointwise to the continuous function $0$. By Dini's theorem, the convergence is uniform: for every $\varepsilon>0$ there is $k\in\N$ with $\sup_{r\in[0,\tau(1)]}m_{\alpha_k,\beta_k}(r)<\varepsilon^2$. By monotonicity, $m_{\alpha,\beta}(r)\leqslant m_{\alpha_k,\beta_k}(r)<\varepsilon^2$ for all $r\in[0,\tau(1)]$ whenever $0<\alpha,\beta\leqslant\frac1k$. Therefore
$$\sup_{r\in[0,\tau(1)]}\|u^{f_{\alpha,\beta,r}}(T)-\mathds{1}_{\Omega(r)}\|_{L^2_c}<\varepsilon,\qquad 0<\alpha,\beta\leqslant\tfrac1k,$$
which is the required uniform convergence.
\end{proof}

Finally, we prove the main result, Theorem~\ref{3mainres}, by describing a reconstruction procedure. For convenience, we restate it.

\medskip
\noindent\textbf{Theorem~\ref{3mainres}.}
\textit{The forward spectral map $\Phi:\mathcal{C}\to\mathcal{D}$ is bijective.}

\begin{proof}
\smallskip
\noindent\textit{Step 1: the spectral data determine the Blagove\v{s}\v{c}enski\u{\i} operator.}
By Proposition~\ref{1mainres}, the Neumann-to-Dirichlet map $\Lambda$ is built from the sequence $\{\lambda_n(0),\lambda_n'(0)\}_{n\in\N}$ alone. Hence $\Lambda$ is a function of the spectral data. The Blagove\v{s}\v{c}enski\u{\i} operator is defined in terms of $\Lambda$ by $K=R\Lambda RJ-J\Lambda$, where $J$ and $R$ are known time-domain operators. Consequently, $K$ is a function of the spectral data as well.

\smallskip
\noindent\textit{Step 2: the operator $K$ determines the travel-time volume function.}
By Proposition~\ref{2mainres}, for each $r\in[0, \tau(1)]$ and $\alpha,\beta>0$, the regularized control
$$f_{\alpha,\beta,r}=(\mathcal{P}_rK\mathcal{P}_r+\alpha I+\beta S)^{-1}\mathcal{P}_rB1$$
is uniquely determined by $K$. Moreover, $u^{f_{\alpha,\beta,r}}(T)\to\mathds{1}_{\Omega(r)}$ in $L^2_c$ uniformly with respect to $r\in[0, \tau(1)]$ as $\alpha,\beta\to+0$. The Blagove\v{s}\v{c}enski\u{\i} identities give
$$(f_{\alpha,\beta,r},B1)_{L^2(0,2T)}=\bigl(u^{f_{\alpha,\beta,r}}(T),1\bigr)_{L^2_c}\to(\mathds{1}_{\Omega(r)},1)_{L^2_c}, \qquad \alpha,\beta\to +0,$$
uniformly in $r\in[0, \tau(1)]$. Hence the travel-time volume function
\[
V(r)=\int_{\Omega(r)}\frac{dx}{c^2(x)}= (\mathds{1}_{\Omega(r)},1)_{L^2_c}
\]
is uniquely determined by the spectral data.

\smallskip
\noindent\textit{Step 3: the volume function determines the wave speed.}
Recall that the travel time coordinate is $\tau(x)=\int_0^x c(y)^{-1}\,dy$. Since $c(x)>0$, the map $\tau$ is strictly increasing, and we denote its inverse by $\chi=\tau^{-1}$. Thus $\chi(0)=0$, $\chi(\tau(x))=x$, and $\tau(\chi(t))=t$. Differentiating the identity $\tau(\chi(t))=t$ gives
$$\chi'(t)=\frac{1}{\tau'(\chi(t))}=c(\chi(t)).$$
On the other hand, by changing variables $x=\chi(t)$ in the definition of $V$, we obtain
$$V(r)=\int_0^{\chi(r)}\frac{dx}{c^2(x)}=\int_0^r\frac{\chi'(t)}{c^2(\chi(t))}\,dt=\int_0^r\frac{dt}{c(\chi(t))},\quad r\in [0,\tau(1)].$$
The integrand above is continuous on $[0,\tau(1)]$ and bounded from below and above, so $V\in C^1([0,\tau(1)])$. Hence,
$$c(\chi(t))=\frac{1}{V'(t)},\qquad t\in[0,\tau(1)].$$
Furthermore, $\chi(t)=\int_0^t c(\chi(s))\,ds=\int_0^t \frac{1}{V'(s)}\,ds$ for $t\in[0,\tau(1)]$.

Thus both the coordinate change $t\mapsto \chi(t)$ and the value $c(\chi(t))$ are determined by $V$. Consequently, for $x=\chi(t)$ we recover
$$c(x)=\frac{1}{V'(\tau(x))},\qquad x\in\overline\Omega.$$
Therefore the values of $V$ on $[0,\tau(1)]$ determine $\chi$ on $[0,\tau(1)]$ and $c$ on $\overline\Omega$. 

\smallskip
\noindent\textit{Step 4: conclusion.}
Combining the steps, the spectral data determine wave speeds,
$$\left\{\lambda_n(0),\lambda_n^{\prime}(0)\right\}_{n\in\N}\mapsto\Lambda\mapsto K\mapsto V\mapsto c(x).$$
Let now $c_1,c_2\in\mathcal{C}$ have the same spectral data $\{\lambda_n(0),\lambda_n^{\prime}(0)\}_{n\in\N}$, and write $\tau_i,\chi_i,V_i$ for the corresponding travel time, its inverse and the travel-time volume. By Step~1, the two wave speeds give the same $K$, hence the same controls $f_{\alpha,\beta,r}$, so by Step~2 we get $V_1=V_2$ on $[0,r_\ast]$, where $r_\ast=\min\{\tau_1(1),\tau_2(1)\}$. By Step~3, $\chi_1=\chi_2$ on $[0,r_\ast]$ and $c_1(\chi_1(t))=c_2(\chi_2(t))$ for $t\in [0,r_\ast]$, since both functions are continuous. Thereby, $c_1=c_2$ on $[0,\chi_1(r_\ast)]$. Notice that $\chi_i(r_\ast)=1$ for at least one $i$, so $\chi_1(r_\ast)=\chi_2(r_\ast)=1$. Hence, $c_1=c_2$ on $[0,1]=\overline\Omega$. Thus $\Phi$ is injective; it is surjective by the definition $\mathcal{D}=\Phi(\mathcal{C})$, so $\Phi:\mathcal{C}\to\mathcal{D}$ is bijective.
\end{proof}

\medskip

\section{Stability results}\label{stabsec}
\subsection{Forward stability}
The spatial wave operator depends on $c(x)$, so we denote the eigenvalues corresponding to the unpertubed problem $(\mathcal{S}_{0})$ by $\lambda^c_n$ and its derivatives with respect to $\omega$ by $(\lambda_n^c)'(\omega)$. Analogously, the eigenfunctions are denoted by $\phi_n^c$.

We begin this section with a sequence of auxiliary technical lemmas.

\begin{Lemma}\label{weylsn2}
	\textit{For any $c_1,c_2\in\mathcal{C}$ and any $n\in\N$, $|\lambda_n^{c_1} - \lambda_n^{c_2}| \lesssim n^2 \|c_1 - c_2\|_{C^2(\overline\Omega)}$. Moreover, $\lambda_n^c\asymp n^2$ uniformly in $c\in\mathcal{C}$.}
\end{Lemma}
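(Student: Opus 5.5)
Both claims will follow from the min--max characterization of the eigenvalues of $(\mathcal{S})$. By Definition~\ref{defweak} with $\omega=0$ and the discussion after Definition~\ref{deffr}, $\lambda_n^c$ is the $n$-th eigenvalue of the form $B(\phi,\phi)=\|\partial_x\phi\|^2_{L^2(\Omega)}$ on $V$ relative to the weighted inner product $(\cdot,\cdot)_{L^2_c}$. This gives the Courant--Fischer formula
$$\lambda_n^c=\min_{\substack{W\subset V\\ \dim W=n}}\ \max_{\phi\in W\setminus\{0\}}\frac{\int_\Omega|\partial_x\phi|^2\,dx}{\int_\Omega|\phi|^2c^{-2}\,dx}.$$
Here $c$ enters only through the weight in the denominator, and the form domain $V$ does not depend on $c$.

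\textbf{Two-sided bound.} Since $C_{min}\leqslant c\leqslant C_{max}$ on $\overline\Omega$, we have
$$C_{max}^{-2}\|\phi\|^2_{L^2}\leqslant\|\phi\|^2_{L^2_c}\leqslant C_{min}^{-2}\|\phi\|^2_{L^2}.$$
Inserting this into the min--max formula gives
$$C_{min}^2\mu_n\leqslant\lambda_n^c\leqslant C_{max}^2\mu_n,$$
where $\mu_n$ are the eigenvalues of the case $c\equiv1$. These are explicit: $\mu_n=\bigl((n-\tfrac12)\pi\bigr)^2$, with eigenfunctions $\cos((n-\tfrac12)\pi x)$. Since $\mu_n\asymp n^2$, this yields $\lambda_n^c\asymp n^2$ with constants depending only on $C_{min},C_{max}$.

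\textbf{Lipschitz estimate.} Put $\delta=\|c_1-c_2\|_{C^0(\overline\Omega)}\leqslant\|c_1-c_2\|_{C^2(\overline\Omega)}$. Pointwise on $\overline\Omega$,
$$\Bigl|\frac{c_1^2}{c_2^2}-1\Bigr|=\frac{|c_1-c_2|\,(c_1+c_2)}{c_2^2}\leqslant 2C_{max}C_{min}^{-2}\,\delta=:\eta.$$
Hence, for every $\phi\in V$,
$$(1-\eta)\|\phi\|^2_{L^2_{c_1}}\leqslant\|\phi\|^2_{L^2_{c_2}}\leqslant(1+\eta)\|\phi\|^2_{L^2_{c_1}}.$$
The Rayleigh quotients for $c_1$ and $c_2$ therefore differ by a factor lying in $[(1+\eta)^{-1},(1-\eta)^{-1}]$. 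The same ratio passes through the min--max, so
$$(1+\eta)^{-1}\lambda_n^{c_1}\leqslant\lambda_n^{c_2}\leqslant(1-\eta)^{-1}\lambda_n^{c_1}.$$

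\begin{itemize}
\item If $\eta\leqslant\tfrac12$: this gives $|\lambda_n^{c_1}-\lambda_n^{c_2}|\leqslant2\eta\lambda_n^{c_1}\lesssim n^2\delta$, using the bound $\lambda_n^{c_1}\lesssim n^2$ from the first part.
\item If $\eta>\tfrac12$: then $\delta\gtrsim1$, and trivially $|\lambda_n^{c_1}-\lambda_n^{c_2}|\leqslant\lambda_n^{c_1}+\lambda_n^{c_2}\lesssim n^2\lesssim n^2\delta$.
\end{itemize}

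In fact only the $C^0$ norm is needed, which is stronger than the stated $C^2$ bound. I see no real obstacle here. The one point to check carefully is that the min--max principle applies in the weighted setting, namely that $\lambda_n^c$ are exactly the ordered eigenvalues of the self-adjoint operator $P_F$ on $L^2_c$ with form $B$ on $V$. This follows from the Hilbert--Schmidt theorem and the relation $A=P_F^{-1}$ stated earlier.
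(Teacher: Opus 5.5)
Your proposal is correct and follows essentially the paper's route. Both use the min--max principle on the $c$-independent form domain $V$. Both compare the weights to get $C_{min}^2\lambda_n^1\leqslant\lambda_n^c\leqslant C_{max}^2\lambda_n^1$, and both use a multiplicative comparison of the weighted norms that passes through the min--max.

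The one cosmetic difference is in the Lipschitz step. You use the two-sided factor $1\pm\eta$ and split into the cases $\eta\leqslant\frac12$ and $\eta>\frac12$. The paper instead uses only the one-sided bound $\|v\|^2_{L^2_{c_1}}\leqslant(1+C\delta)\|v\|^2_{L^2_{c_2}}$ in each direction by symmetry. This gives $|\lambda_n^{c_1}-\lambda_n^{c_2}|\leqslant C\max_i\lambda_n^{c_i}\,\delta$ directly, with no case distinction. Like yours, the paper's argument effectively uses only the $C^0$ norm of $c_1-c_2$.
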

\begin{proof}
For any $c \in \mathcal{C}$, the $n$-th eigenvalue is characterized by the min-max principle, namely
$$\lambda_n^c = \min_{\dim E = n} \,\max_{v \in E\setminus\{0\}} \frac{B(v,v)}{\|v\|_{L^2_c}^2},$$
where the minimum is taken over all $n$-dimensional subspaces $E \subset V$; note that $V$ does not depend on $c$. Since $C_{min}\leqslant c\leqslant C_{max}$ on $\Omega$, one has
$$C_{max}^{-2}\|v\|_{L^2(\Omega)}^2\leqslant \|v\|_{L^2_c}^2\leqslant C_{min}^{-2}\|v\|_{L^2(\Omega)}^2,\qquad v\in V.$$
Recall that $\lambda_n^1=\left(\left(n-\frac{1}{2}\right)\pi\right)^2$ the eigenvalues of $(\mathcal{S})$ for $c\equiv 1$. Comparing the Rayleigh quotients termwise and taking the min-max gives the uniform two-sided bound $C_{min}^2\,\lambda_n^1\leqslant\lambda_n^c\leqslant C_{max}^2\,\lambda_n^1$ for any $n\in\N$ and $c\in\mathcal{C}$. Thereby, $\lambda_n^c\asymp n^2$ with constants depending only on the admissible class $\mathcal{C}$ definition.

By the mean value theorem,
$$|c_1(x)^{-2} - c_2(x)^{-2}| \leqslant 2C_{min}^{-3} |c_1(x) - c_2(x)| \leqslant 2C_{min}^{-3} \|c_1 - c_2\|_{C^2(\overline\Omega)}.$$
Consequently, using $\|v\|_{L^2(\Omega)}^2 \leqslant C_{max}^2 \|v\|_{L^2_{c_2}}^2$ we obtain
$$\left| \|v\|_{L^2_{c_1}}^2 - \|v\|_{L^2_{c_2}}^2 \right| = \left| \int_\Omega (c_1^{-2} - c_2^{-2}) |v|^2 \, dx \right| \leqslant C\|c_1 - c_2\|_{C^2(\overline\Omega)} \|v\|_{L^2_{c_2}}^2,$$
where $C = 2C_{min}^{-3} C_{max}^2$. Dropping the absolute value on the left yields $\|v\|_{L^2_{c_1}}^2 \leqslant \big(1 + C \|c_1 - c_2\|_{C^2(\overline\Omega)}\big) \|v\|_{L^2_{c_2}}^2$, and dividing gives
$$\frac{B(v,v)}{\|v\|_{L^2_{c_1}}^2} \geqslant \frac{B(v,v)}{\big(1 + C \|c_1 - c_2\|_{C^2(\overline\Omega)}\big) \|v\|_{L^2_{c_2}}^2},\qquad v\in V\setminus\{0\}.$$
Taking the min-max over all subspaces $E \subset V$ preserves this inequality for the eigenvalues,
$$ \lambda_n^{c_1} \geqslant \frac{\lambda_n^{c_2}}{1 + C\|c_1 - c_2\|_{C^2(\overline\Omega)}}.$$
Hence, $\lambda_n^{c_2}-\lambda_n^{c_1}\leqslant C \lambda_n^{c_1} \|c_1-c_2\|_{C^2(\overline\Omega)}$. By complete symmetry, we also get $\lambda_n^{c_1} - \lambda_n^{c_2} \leqslant C  \lambda_n^{c_2}\|c_1-c_2\|_{C^2(\overline\Omega)}$. Thus, we obtain
$$|\lambda_n^{c_1} - \lambda_n^{c_2}|\leqslant C\max(\lambda_n^{c_1}, \lambda_n^{c_2}) \|c_1-c_2\|_{C^2(\overline\Omega)}.$$
The uniform upper bound $\lambda_n^c\leqslant C_{max}^2\,\lambda_n^1\lesssim n^2$ entails the required estimate.
\end{proof}

\begin{Lemma}\label{eigderabs}
	\textit{There is a constant $C_{abs}>0$ such that $|(\lambda_n^c)'(0)|\leqslant C_{abs}$ for any $n\in\N$ and for any $c\in\mathcal{C}$.}
\end{Lemma}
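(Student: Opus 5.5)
By Corollary~\ref{gbeig0}, $|(\lambda_n^c)'(0)|=|\phi_n^c(0)|^2$, where $\phi_n^c$ is the $L^2_c$-normalized eigenfunction of $(\mathcal{S})$. The claim is therefore equivalent to a uniform sup-norm bound $\|\phi_n^c\|_{L^\infty(\Omega)}\lesssim 1$ over all $n\in\N$ and $c\in\mathcal{C}$. The plan is to prove this with a Liouville-type energy argument along the ODE $-(\phi_n^c)''=\lambda_n^c c^{-2}\phi_n^c$. This approach uses only $c\in C^1$ with the bounds $C_{min}\leqslant c\leqslant C_{max}$ and $|c'|\leqslant C_{max}$.

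Write $\lambda=\lambda_n^c$ and $\phi=\phi_n^c$, and introduce the modified energy
\[
\mathcal{E}(x)=\lambda c(x)^{-2}\phi(x)^2+\phi'(x)^2 .
\]
Using the equation, the cross terms cancel and we get $\mathcal{E}'(x)=-2\lambda c^{-3}c'\phi^2$. Since $\lambda c^{-2}\phi^2\leqslant\mathcal{E}$, this gives $|\mathcal{E}'|\leqslant 2C_{max}C_{min}^{-1}\mathcal{E}$ on $\overline\Omega$. Gronwall's inequality then yields $\mathcal{E}(x)\asymp\mathcal{E}(y)$ for all $x,y\in\overline\Omega$, with constants $e^{\pm 2C_{max}/C_{min}}$ that do not depend on $n$ or $c$. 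The eigenfunction is $C^2$ because $c$ is continuous, so these manipulations are legitimate.

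Next we integrate. Since $\mathcal{E}\geqslant\lambda c^{-2}\phi^2$, normalization gives
\[
\int_\Omega\mathcal{E}\,dx\geqslant\lambda\|\phi\|^2_{L^2_c}=\lambda .
\]
For the upper bound, the weak form with $v=\phi$ gives $\int_\Omega(\phi')^2\,dx=B(\phi,\phi)=\lambda$, so $\int_\Omega\mathcal{E}\,dx=2\lambda$. Hence $\min_{\overline\Omega}\mathcal{E}\leqslant 2\lambda$, and by the comparability step $\mathcal{E}(0)\lesssim\lambda$. At $x=0$ the Neumann condition $\phi'(0)=0$ gives $\mathcal{E}(0)=\lambda c(0)^{-2}\phi(0)^2=\lambda\phi(0)^2$, using $c(0)=1$. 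Dividing by $\lambda>0$ gives $|\phi(0)|^2\leqslant 2e^{2C_{max}/C_{min}}=:C_{abs}$.

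The main point needing care is the Gronwall step: the constant must be independent of $n$. This works because $\lambda$ appears in both $\mathcal{E}$ and $\mathcal{E}'$, so it cancels in the ratio $|\mathcal{E}'|/\mathcal{E}$. Everything else is routine. As a fallback, one could use the Liouville transformation to the variable $\tau$ together with a Prüfer-angle argument, but the energy argument above avoids needing asymptotics.
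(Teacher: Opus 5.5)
Your proof is correct, and it takes a genuinely different route from the paper's.

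\textbf{The paper's route.} The paper applies the Liouville transformation $\psi=\tilde c^{-1/2}\tilde\phi$ to reach the Schr\"odinger form $-\psi''+q\psi=\lambda\psi$ on $(0,\tau(1))$, with $q=(\tfrac14(c')^2-\tfrac12cc'')\circ\chi$. It then writes the Volterra equation $\psi_n=\phi_n(0)\cos(\tau\sqrt{\lambda_n^c})+S_{\lambda_n^c}[q\psi_n]$. It bounds $|\phi_n(0)|$ in $L^2$ using three ingredients: the lower bound $\|\cos(\cdot\sqrt{\lambda_n^c})\|^2_{L^2(0,\tau(1))}\geqslant\tau(1)/4$, the normalization $\|\psi_n\|_{L^2}=1$, and the eigenvalue lower bound $\lambda_n^c\geqslant(C_{min}\pi/2)^2$ from Lemma~\ref{weylsn2}. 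This route needs $c\in C^2$, so that $q$ is bounded, and $c'(0)=0$, so that the Neumann condition survives the transform. The paper accepts these costs because the same Volterra machinery (the normalized solutions $v_c$ and the bound $M_0$) is reused in Lemma~\ref{eigderdiff0} to show that $\phi_n^c(0)$ depends Lipschitz-continuously on $c$. An energy identity alone would not readily give that estimate.

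\textbf{Your route.} You work directly in $x$ with $\mathcal{E}=\lambda c^{-2}\phi^2+(\phi')^2$. Each step checks out:
\begin{itemize}
\item the identity $\mathcal{E}'=-2\lambda c^{-3}c'\phi^2$ holds;
\item $\lambda$ cancels in $|\mathcal{E}'|/\mathcal{E}\leqslant2|c'|/c$, so the Gronwall constant is independent of $n$;
\item the weak form gives $\int_\Omega\mathcal{E}\,dx=2\lambda$.
\end{itemize}
Together these give the explicit constant $C_{abs}=2e^{2C_{max}/C_{min}}$.

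\textbf{What your route buys.} It requires only a bound on $\int_0^1|c'|/c\,dx$. It does not need $c''$ or $c'(0)=0$, and it does not need $c(0)=1$ either, at the cost of a factor $c(0)^2$. It also uses no lower bound on the eigenvalues. As a by-product it gives the uniform bound $|\phi_n^c(x)|^2\leqslant2C_{max}^2e^{2C_{max}/C_{min}}$ on all of $\overline\Omega$.

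\textbf{One small point.} Your opening claim that the trace bound is ``equivalent'' to a sup-norm bound is only justified after the fact. It follows from your comparability step $\mathcal{E}(x)\asymp\mathcal{E}(0)$, so it is not a reduction you can invoke up front.
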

\begin{proof}
Consider the following coordinate change $\tau(x) = \int_0^x c(y)^{-1}\,dy$ and $x=\tau^{-1}$. The differentials and derivatives transform as follows:
$$d\tau = \frac{dx}{c(x)},\quad dx = c(x(\tau))\,d\tau,\quad \partial_x = \frac{1}{c(x(\tau))}\partial_\tau.$$
Let $\phi$ be a weak solution to $(\mathcal{S})$, then we get that under the coordinate change the identity $(\partial_x\phi,\partial_xv)_{L^2(\Omega)}=\lambda (\phi,v)_{L^2_c}$, $v\in V$, transforms into
$$\int\limits_0^{\tau(1)}\frac{1}{\tilde{c}(\tau)}\partial_\tau\tilde{\phi}(\tau)\partial_\tau\tilde{v}(\tau)\,d\tau = \lambda \int\limits_0^{\tau(1)}\frac{1}{\tilde{c}(\tau)}\tilde{\phi}(\tau)\tilde{v}(\tau)\,d\tau,$$
where $\tilde{\phi}(\tau)=\phi(x(\tau))$, $\tilde{v}(\tau)=v(x(\tau))$, and $\tilde{c}(\tau)=c(x(\tau))$. Now we define the transformed eigenfunction $\psi(\tau)$ by rescaling $\phi(x)$, namely $\psi(\tau) = \tilde{c}(\tau)^{-1/2}\tilde{\phi}(\tau)$. Notice that $\psi(\tau(1)) = c(1)^{-1/2}\phi(1)=0$. Now, we evaluate
$$\partial_\tau\tilde{\phi}(\tau)=\partial_\tau\left(\tilde{c}(\tau)^{1/2}\psi(\tau)\right)= \tilde{c}(\tau)^{1/2}\left(\partial_\tau\psi(\tau)+\frac{\partial_\tau \tilde{c}(\tau)}{2\tilde{c}(\tau)}\psi(\tau)\right).$$
Similarly, defining $\eta(\tau) = \tilde{c}(\tau)^{-1/2}\tilde{v}(\tau)$ gives exactly the same identity. Insert all of the above into the identity:
$$\int\limits_0^{\tau(1)}\left(\partial_\tau\psi(\tau)+\frac{\partial_\tau \tilde{c}(\tau)}{2\tilde{c}(\tau)}\psi(\tau)\right)\left(\partial_\tau\eta(\tau)+\frac{\partial_\tau \tilde{c}(\tau)}{2\tilde{c}(\tau)}\eta(\tau)\right)\,d\tau = \lambda \int\limits_0^{\tau(1)}\psi(\tau)\eta(\tau)\,d\tau.$$
Expanding the product gives three terms:
$$\int\limits_0^{\tau(1)}\partial_\tau\psi\partial_\tau\eta\,d\tau + \int\limits_0^{\tau(1)} \left(\frac{\partial_\tau \tilde{c}}{2\tilde{c}}\right)^2 \psi\eta\,d\tau + \int\limits_0^{\tau(1)}\frac{\partial_\tau \tilde{c}}{2\tilde{c}}\partial_\tau(\psi\eta)\,d\tau. $$
Integrating the latter by parts gives
$$\left(\frac{\partial_\tau \tilde{c}}{2\tilde{c}}\psi\eta\right)\Bigg|_0^{\tau(1)} -\int\limits_0^{\tau(1)}\partial_\tau\left(\frac{\partial_\tau \tilde{c}}{2\tilde{c}}\right)\psi\eta\,d\tau.$$
The first term above vanishes due to the boundary conditions and the fact that $\partial_\tau\tilde{c}(0)=0$. Hence, we obtain
$$\int\limits_0^{\tau(1)}\partial_\tau\psi\partial_\tau\eta\,d\tau + \int\limits_0^{\tau(1)} \left(\left(\frac{\partial_\tau \tilde{c}}{2\tilde{c}}\right)^2 - \partial_\tau\left(\frac{\partial_\tau \tilde{c}}{2\tilde{c}}\right)\right)\psi\eta\,d\tau  =\lambda \int\limits_0^{\tau(1)}\psi\eta\,d\tau.$$
Moreover, the following 
$$\partial_\tau\psi(\tau)=c(x)\partial_x\left(c(x)^{-1/2}\phi(x)\right)= c(x)^{1/2}\partial_x\phi(x)-\frac{1}{2}c(x)^{-1/2}c'(x)\phi(x)$$
shows that the condition $\partial_x\phi(0)=0$ turns into $\partial_\tau\psi(0)=0$, since $c'(0)=0$ for any $c\in\mathcal{C}$. This matches the canonical weak formulation of the Schrödinger equation:
$$-\partial_\tau^2\psi + q\psi = \lambda\psi,\quad q=\frac{1}{4}\left(\frac{\partial_\tau \tilde{c}}{\tilde{c}}\right)^2 - \frac{1}{2}\partial_\tau\left(\frac{\partial_\tau \tilde{c}}{\tilde{c}}\right).$$

The potential $q$ is continuous and uniformly bounded due to the formula $q=(\frac14 (c')^2 -\frac12 cc'')(x(\tau))$, namely $\|q\|_{L^\infty(0, \tau(1))} \leqslant \frac34 C_{max}^2=: Q < \infty$. By variation of parameters, any solution to the Schrödinger equation satisfies the Volterra integral equation:
$$\psi_n(\tau)= \phi_n(0)\cos(\tau\sqrt{\lambda_n^c}) + S_{\lambda_n^c}[q \psi_n](\tau) .$$
Now fix arbitrary $c\in\mathcal{C}$. Note that $\|\cos(\cdot\sqrt{\lambda_n^c})\|_{L^2(0,\tau(1))}^2=\frac12 \tau(1) +(4\sqrt{\lambda_n^c})^{-1}\sin(2\tau(1)\sqrt{\lambda_{n}^c}) \geqslant \frac14 \tau(1) \geqslant(4C_{max})^{-1}=:\varepsilon$ for any $n\in\N$, since if $\tau(1)\sqrt{\lambda_{n}^c}\leqslant 1 <\frac\pi2$, the value of the sine function is nonnegative, and if $\tau(1)\sqrt{\lambda_{n}^c}> 1$, the absolute value of the sine function is at most $1$. Then the inequality
$$|\phi_n(0)|\cdot\|\cos(\cdot\sqrt{\lambda_n^c})\|_{L^2(0,\tau(1))}\leqslant\|\psi_n\|_{L^2(0,\tau(1))}+\|S_{\lambda_n^c}[q \psi_n]\|_{L^2(0,\tau(1))}$$
implies that there is a constant $C_{abs} >0$ depending only on $C_{min}$ and $C_{max}$ such that
$$|\phi_n(0)|^2\lesssim \frac{1}{\varepsilon}\left(1+\frac{\tau(1)Q}{\sqrt{\lambda_n^c}}\right)^2\|\psi_n\|^2_{L^2(0,\tau(1))}\leqslant C_{abs}$$
due to $\|\psi_n\|_{L^2(0,\tau(1))}=\|\phi_n\|_{L^2_c}=1$ and the uniform lower bound $\lambda^c_n\geqslant C^2_{min}\lambda^1_1=(C_{min}\pi/2)^2$ from the proof of Lemma~\ref{weylsn2}. The required follows from Corollary \ref{gbeig0}.
\end{proof}

\begin{Lemma}\label{eigderdiff0}
	\textit{For any $c_1,c_2\in\mathcal{C}$, $|(\lambda_n^{c_1})'(0) - (\lambda_n^{c_2})'(0)|\lesssim \|c_1-c_2\|_{C^2(\overline\Omega)}$ for every $n\in\N$.}
\end{Lemma}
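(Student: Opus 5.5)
By Corollary~\ref{gbeig0}, $(\lambda_n^{c})'(0)=-|\phi_n^{c}(0)|^2$. Lemma~\ref{eigderabs} gives $|\phi_n^{c}(0)|\leqslant C_{abs}^{1/2}$, so
$$\bigl|(\lambda_n^{c_1})'(0)-(\lambda_n^{c_2})'(0)\bigr|\leqslant 2C_{abs}^{1/2}\,\bigl|\phi_n^{c_1}(0)-\phi_n^{c_2}(0)\bigr|.$$
It therefore suffices to prove that $|\phi_n^{c_1}(0)-\phi_n^{c_2}(0)|\lesssim\|c_1-c_2\|_{C^2(\overline\Omega)}$ uniformly in $n$. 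I would work in the Liouville normal form from the proof of Lemma~\ref{eigderabs}, with $\psi_n^{c}(\tau)=\tilde c(\tau)^{-1/2}\phi_n^{c}(x(\tau))$. Since $c(0)=1$, we have $\psi_n^c(0)=\phi_n^c(0)>0$.

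The plan is to avoid eigenfunction perturbation, whose constants would depend on the spectral gap, and use the explicit shooting solution instead. Let $y_c(\tau,\mu)$ solve $-y''+q_c y=\mu y$ with $y(0)=1$, $y'(0)=0$. Then $\psi_n^c=\phi_n^c(0)\,y_c(\cdot,\lambda_n^c)$ on $[0,\tau_c(1)]$. The normalization $\|\psi_n^c\|_{L^2(0,\tau_c(1))}=1$ gives
$$|\phi_n^c(0)|^{-2}=\int_0^{\tau_c(1)}|y_c(\tau,\lambda_n^c)|^2\,d\tau=:N_c(\lambda_n^c).$$
Since $N_c\gtrsim 1$ by Lemma~\ref{eigderabs}, it is enough to bound $|N_{c_1}(\lambda_n^{c_1})-N_{c_2}(\lambda_n^{c_2})|$. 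I would split this difference into three pieces, each to be bounded by $\|c_1-c_2\|_{C^2}$.

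\emph{Interval length.} $|\tau_{c_1}(1)-\tau_{c_2}(1)|\lesssim\|c_1-c_2\|_{C^0}$, and $|y|\lesssim 1$ uniformly by the Volterra equation
$$y=\cos(\tau\sqrt\mu)+S_\mu[qy]$$
and Gronwall, because $|S_\mu[g]|\leqslant\mu^{-1/2}\int|g|$.

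\emph{Potential.} Since $q_c=\bigl(\tfrac14(c')^2-\tfrac12cc''\bigr)\circ x(\tau)$ and $x_{c_1}-x_{c_2}$ is controlled by $\|c_1-c_2\|_{C^0}$, the $C^2$ norm controls $\|q_{c_1}-q_{c_2}\|_{L^1}$, after using $c\in C^2$ so that $c''\circ x$ is uniformly continuous. The Volterra equation plus Gronwall then gives
$$\|y_{c_1}(\cdot,\mu)-y_{c_2}(\cdot,\mu)\|_\infty\lesssim\mu^{-1/2}\|q_{c_1}-q_{c_2}\|_{L^1}.$$

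\emph{Spectral parameter.} This is the main obstacle. By Lemma~\ref{weylsn2}, $|\lambda_n^{c_1}-\lambda_n^{c_2}|\lesssim n^2\varepsilon$, so $|\sqrt{\lambda_n^{c_1}}-\sqrt{\lambda_n^{c_2}}|\lesssim n\varepsilon$. A crude bound on $\partial_\mu y$ is of size $\tau/\sqrt\mu$, which only gives an $O(\varepsilon)$ pointwise difference in $y$ when combined with this shift, but it must be done carefully. The better route is to differentiate $N_c(\mu)$ in $\mu$ directly:
$$N_c(\mu)=\tfrac12\tau(1)+O(\mu^{-1/2}),\qquad \partial_\mu N_c=O(\mu^{-1/2}\cdot\mu^{-1/2}\cdot\tau)=O(\mu^{-1}),$$
where the oscillatory terms $\cos^2=\tfrac12+\tfrac12\cos 2\tau\sqrt\mu$ integrate to $O(\mu^{-1/2})$ and their $\mu$-derivative is of size $\mu^{-1/2}$. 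The difference from the spectral shift is then $O(\mu^{-1}\cdot n^2\varepsilon)=O(\varepsilon)$, since $\mu\asymp n^2$. I would verify $\partial_\mu N_c=O(\mu^{-1/2})$ by differentiating the Volterra equation, and expect this bookkeeping of the cross terms to be the delicate part. Small $n$ is handled trivially by the same bounds.

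Combining the three pieces gives $|N_{c_1}(\lambda_n^{c_1})-N_{c_2}(\lambda_n^{c_2})|\lesssim\varepsilon$ uniformly in $n$, and hence the lemma.
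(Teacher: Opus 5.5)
The overall scheme matches the paper's. It uses the shooting solution normalized by $y(0)=1$, $y'(0)=0$, the identity $\phi_n^c(0)^{-2}=N_c(\lambda_n^c)$, and a comparison of the two normalizing integrals. Isolating the spectral parameter through $\partial_\mu N_c=O(\mu^{-1})$ is a legitimate variant of the paper's simultaneous expansion of $v_{c_1}^2-v_{c_2}^2$, and that bound does follow from differentiating the Volterra equation.

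The genuine gap is in the \emph{potential} step. Your claim that $\|c_1-c_2\|_{C^2(\overline\Omega)}$ controls $\|q_{c_1}-q_{c_2}\|_{L^1}$ is false with a constant depending only on $C_{min}$, $C_{max}$. Since $q_c$ contains $c''\circ\chi_c$ and $\chi_{c_1}\neq\chi_{c_2}$, you must control $c_1''(\chi_{c_1}(\tau))-c_1''(\chi_{c_2}(\tau))$ with $|\chi_{c_1}-\chi_{c_2}|\sim\delta$. In $\mathcal{C}$, however, $c''$ is merely continuous, with no uniform modulus of continuity and no bound on its variation.

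For a concrete failure, let $c_1$ contain a small multiple of $k^{-2}\psi(x)\sin(kx)$ with $k\sim\delta^{-1}$; its $C^2$ norm is bounded uniformly in $k$. Let $c_2=c_1+\delta\rho$ with a bump $\rho$ to the left of $\operatorname{supp}\psi$. Then $\|c_1-c_2\|_{C^2}\sim\delta$, the coordinate changes differ by order $\delta$ on $\operatorname{supp}\psi$, so the phases $k\chi_{c_i}$ differ by order one, and $\|q_{c_1}-q_{c_2}\|_{L^1}\gtrsim 1$. Uniform continuity of $c''\circ x$ only gives a qualitative $o(1)$ with a rate depending on $c_1$. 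For small $n$ the prefactor $\mu^{-1/2}$ does not compensate.

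The missing idea is that $q_{c_1}-q_{c_2}$ is small only in a negative-order sense, which the Volterra kernel can exploit. The paper writes $q_c=a_c^2-\partial_\tau a_c$ with $a_c=\tfrac12c'\circ\chi_c$. Because $c_1'$ is $C_{max}$-Lipschitz, $b=a_{c_2}-a_{c_1}$ satisfies $\|b\|_{L^\infty}\lesssim\delta$, and $b(0)=0$ since $c_i'(0)=0$. Also $a_{c_2}^2-a_{c_1}^2=O(\delta)$. The term $S_\mu[b'y]$ is then integrated by parts against $\sin((\tau-t)\sqrt\mu)/\sqrt\mu$. The boundary terms vanish, and the remaining integrand is bounded by $|b|\bigl(\|y\|_{L^\infty}+\mu^{-1/2}\|y'\|_{L^\infty}\bigr)\lesssim\delta$, using $\|y'\|_{L^\infty}\lesssim\sqrt\mu$ from the differentiated Volterra equation. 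Gr\"onwall then gives $\|y_{c_1}(\cdot,\mu)-y_{c_2}(\cdot,\mu)\|_{L^\infty}\lesssim\delta$. This lacks the factor $\mu^{-1/2}$ you claimed, but it is all your decomposition needs.

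Two smaller points:
\begin{itemize}
\item The spectral step needs $\partial_\mu N_c=O(\mu^{-1})$, as you first state. The $O(\mu^{-1/2})$ you later propose to verify would only give $O(n\varepsilon)$.
\item The crude bound $|\partial_\mu y|\lesssim\tau\mu^{-1/2}$ together with $|\lambda_n^{c_1}-\lambda_n^{c_2}|\lesssim n^2\varepsilon$ yields $O(n\varepsilon)$ pointwise, not $O(\varepsilon)$. That is exactly why the cancellation inside $\int y^2$ must be used.
\end{itemize}
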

\begin{proof}
Fix $c_1,c_2\in\mathcal{C}$, $n\in\N$, and write $\delta=\|c_1-c_2\|_{C^2(\R)}$, $\lambda_i=\lambda_n^{c_i}$, $i=1,2$. By Corollary~\ref{gbeig0},
$$(\lambda_n^{c_1})'(0) - (\lambda_n^{c_2})'(0) =(\phi_n^{c_2}(0) - \phi_n^{c_1}(0))\, (\phi_n^{c_1}(0) + \phi_n^{c_2}(0)),$$
and Lemma~\ref{eigderabs} gives $|\phi_n^{c_1}(0) + \phi_n^{c_2}(0)|\leqslant 2\sqrt{C_{abs}}$. Hence it suffices to show
\begin{equation}\label{eq:trace-diff}
	|\phi_n^{c_2}(0) - \phi_n^{c_1}(0)|\lesssim \delta .
\end{equation}

\smallskip
\noindent\textit{Step 1: common domain and normalized eigenfunctions.}
We place the eigenfunctions of $c_1$ and $c_2$ on a common interval, normalize them at $\tau=0$ instead of in $L^2$, and record uniform bounds on the normalized solutions and their derivatives.

For $c\in\mathcal{C}$ the travel time $\tau_c(x)=\int_0^x c^{-1}(y)\,dy$ is defined for all $x\in\R$ and its inverse $\chi_c$ is defined on $\R$, since $c\equiv 1$ on $\R\setminus\Omega$. Consequently the potential
$$q_c=\Big(\tfrac14 (c')^2-\tfrac12\, c\, c''\Big)\circ\chi_c$$
from the proof of Lemma~\ref{eigderabs} is continuous on $[0,+\infty)$, $\|q_c\|_{L^\infty(0,+\infty)}\leqslant Q$, and $q_c\equiv 0$ on $[\tau_c(1),+\infty)$ due to $c'(1)=c''(1)=0$. Set $L=C_{min}^{-1}$; then $\tau_c(1)\leqslant L$ for every $c\in\mathcal{C}$. Throughout the proof all functions of $\tau$ are considered on the common interval $[0,L]$. Let $v_c\in C^2[0,L]$ be the solution of the initial value problem
$$-\partial_\tau^2 v_c + q_c v_c=\lambda_n^c v_c \ \text{ on } (0,L),\qquad v_c(0)=1,\quad v_c'(0)=0 .$$
Let $\phi_n^c$ and $\psi_n^c$ be as in the proof of Lemma~\ref{eigderabs}; recall that $\psi_n^c(0)=\phi_n^c(0)>0$ and $\partial_\tau\psi_n^c(0)=0$. Uniqueness for the initial value problem yields $v_c=\psi_n^c/\phi_n^c(0)$ on $[0,\tau_c(1)]$. By variation of parameters, $v_c$ satisfies the Volterra equation
\begin{equation}\label{eq:volterra}
	v_c(\tau) = \cos(\tau\sqrt{\lambda_n^c}) + S_{\lambda_n^c}[q_c v_c](\tau),\qquad \tau\in[0,L].
\end{equation}
Note that for every $\lambda>0$ and $0<\tau\leqslant L$,
\begin{equation}\label{eq:S-op-bound}
	\|S_\lambda\|_{L^1(0,\tau)\to L^\infty(0,\tau)}\leqslant\lambda^{-1/2}.
\end{equation}
Hence \eqref{eq:volterra} gives $|v_c(\tau)|\leqslant 1+Q(\lambda_n^c)^{-1/2}\|v_c\|_{L^1(0,\tau)}$ for $\tau\in[0,L]$, and Gr\"onwall's inequality together with Lemma~\ref{weylsn2} yields
\begin{equation}\label{eq:v-bound}
	\|v_c\|_{L^\infty(0,L)}\leqslant e^{QL/\sqrt{\lambda_1^c}}\leqslant e^{2Q/(\pi C_{min}^2)}=:M_0 .
\end{equation}
Differentiating \eqref{eq:volterra} by the Leibniz rule, we get
$$v_c'(\tau)=-\sqrt{\lambda_n^c}\,\sin(\tau\sqrt{\lambda_n^c})+\int_0^\tau\cos\big((\tau-t)\sqrt{\lambda_n^c}\big)\,q_c(t)v_c(t)\,dt,$$
and hence, by \eqref{eq:v-bound},
\begin{equation}\label{eq:vprime-bound}
	\|v_c'\|_{L^\infty(0,L)}\leqslant \sqrt{\lambda_n^c}+LQM_0\lesssim n .
\end{equation}
Finally, by \eqref{eq:S-op-bound}, \eqref{eq:v-bound} and Lemma~\ref{weylsn2},
\begin{equation}\label{eq:rem-bound}
	\|S_{\lambda_n^c}[q_cv_c]\|_{L^\infty(0,L)}\leqslant (\lambda_n^c)^{-1/2}LQM_0\lesssim n^{-1}.
\end{equation}

\smallskip
\noindent\textit{Step 2: comparison of the coordinate changes and potentials.}
Write $\tau_i=\tau_{c_i}$ and $\chi_i=\chi_{c_i}$, $i=1,2$. We show that the travel-time coordinates differ by $O(\delta)$, and split $q_{c_2}-q_{c_1}$ into a part of size $O(\delta)$ and a derivative of a function of size $O(\delta)$; this splitting is needed for the integration by parts in Step~3.

For $x\in\overline{\Omega}$, $|\tau_1(x)-\tau_2(x)|\leqslant\int_0^1|c_1^{-1}-c_2^{-1}|\,dy\leqslant C_{min}^{-2}\delta$, and the difference $\tau_1-\tau_2$ is constant on $[1,+\infty)$ and vanishes on $(-\infty,0]$, so $\|\tau_1-\tau_2\|_{L^\infty(\R)}\leqslant C_{min}^{-2}\delta$. Since $\chi_1'=c_1\circ\chi_1\leqslant C_{max}$, for $\tau\geqslant 0$ and $x=\chi_2(\tau)$,
$$|\chi_1(\tau)-\chi_2(\tau)|=|\chi_1(\tau_2(x))-\chi_1(\tau_1(x))|\leqslant C_{max}\|\tau_1-\tau_2\|_{L^\infty(\R)}\leqslant C_{max}C_{min}^{-2}\delta .$$
Write $q_{c_i}=a_i^2-\partial_\tau a_i$ with $a_i=\tfrac12 c_i'\circ\chi_i\in C^1[0,L]$, and set $b=a_2-a_1$. Since $c_1'$ is $C_{max}$-Lipschitz, for $\tau\in[0,L]$,
$$2|b(\tau)|\leqslant |c_2'(\chi_2(\tau))-c_1'(\chi_2(\tau))|+|c_1'(\chi_2(\tau))-c_1'(\chi_1(\tau))|\leqslant \delta + C_{max}^2 C_{min}^{-2}\delta,$$
so $\|b\|_{L^\infty(0,L)}\lesssim\delta$. Moreover $\|a_1+a_2\|_{L^\infty(0,L)}\leqslant C_{max}$, hence
$$\|a_2^2-a_1^2\|_{L^\infty(0,L)}\leqslant\|b\|_{L^\infty(0,L)}\|a_1+a_2\|_{L^\infty(0,L)}\lesssim\delta .$$
Finally, $b(0)=\tfrac12(c_2'(0)-c_1'(0))=0$.

\smallskip
\noindent\textit{Step 3: the difference $w=v_{c_2}-v_{c_1}$.}
Subtracting the Volterra equations \eqref{eq:volterra} gives, on $[0,L]$,
\begin{align*}
w ={}& \cos(\cdot\,\sqrt{\lambda_2}) - \cos(\cdot\,\sqrt{\lambda_1})+ (S_{\lambda_2} - S_{\lambda_1})[q_{c_2} v_{c_2}] \\
&+ S_{\lambda_1}\big[(q_{c_2} - q_{c_1}) v_{c_2}\big] + S_{\lambda_1}[q_{c_1} w] .
\end{align*}
We estimate the first three terms in $L^\infty(0,L)$.

First, by the mean value theorem and Lemma~\ref{weylsn2}, for $\tau\in[0,L]$,
\begin{equation}\label{eq:cos-diff}
|\cos(\tau\sqrt{\lambda_2}) - \cos(\tau\sqrt{\lambda_1})| \leqslant \tau|\sqrt{\lambda_2}-\sqrt{\lambda_1}|= \frac{\tau |\lambda_2 - \lambda_1|}{\sqrt{\lambda_1}+\sqrt{\lambda_2}} \lesssim n\delta .
\end{equation}

Second, for $s\in[0,2L]$ the function $\lambda\mapsto \frac{\sin(s\sqrt\lambda)}{\sqrt\lambda}$ satisfies
$$\Big|\partial_\lambda\frac{\sin(s\sqrt{\lambda})}{\sqrt\lambda}\Big| = \Big|\frac{s \cos(s\sqrt{\lambda})}{2\lambda} - \frac{\sin(s\sqrt{\lambda})}{2\lambda^{3/2}}\Big|\leqslant \frac{L}{\lambda}+\frac{1}{2\lambda^{3/2}},$$
and hence, by the mean value theorem in $\lambda$ and Lemma~\ref{weylsn2},
\begin{equation}\label{eq:kernel-diff}
\sup_{s\in[0,2L]}\Big|\frac{\sin(s\sqrt{\lambda_2})}{\sqrt{\lambda_2}}-\frac{\sin(s\sqrt{\lambda_1})}{\sqrt{\lambda_1}}\Big|
\leqslant |\lambda_2-\lambda_1|\sup_{\lambda\in[\min\lambda_i,\max\lambda_i]}\Big(\frac{L}{\lambda}+\frac{1}{2\lambda^{3/2}}\Big).
\end{equation}
The latter is bounded by a constant multiple of $n^{-2}\cdot n^2\delta=\delta$. In particular $\|S_{\lambda_2}-S_{\lambda_1}\|_{L^1(0,L)\to L^\infty(0,L)}\lesssim\delta$, and together with \eqref{eq:v-bound},
\begin{align*}
\|(S_{\lambda_2} - S_{\lambda_1})[q_{c_2} v_{c_2}]\|_{L^\infty(0,L)}& \leqslant \|S_{\lambda_2}-S_{\lambda_1}\|_{L^1(0,L)\to L^\infty(0,L)}\,\|q_{c_2}v_{c_2}\|_{L^1(0,L)}\\
& \lesssim LQM_0\,\delta .
\end{align*}

Third, $q_{c_2}-q_{c_1}=(a_2^2-a_1^2)-\partial_\tau b$. By Step 2, \eqref{eq:v-bound} and \eqref{eq:S-op-bound},
\begin{align*}
	\|S_{\lambda_1}[(a_2^2-a_1^2) v_{c_2}]\|_{L^\infty(0,L)} & \leqslant \lambda_1^{-1/2}\|(a_2^2-a_1^2)v_{c_2}\|_{L^1(0,L)}\\
	& \leqslant L\lambda_1^{-1/2}M_0\|a_2^2-a_1^2\|_{L^\infty(0,L)}\lesssim n^{-1}\delta .
\end{align*}
For the remaining part we integrate by parts. For $\tau\in[0,L]$,
\begin{align*}
S_{\lambda_1}[b'\,v_{c_2}](\tau)=&\int_0^\tau \frac{\sin((\tau-t)\sqrt{\lambda_1})}{\sqrt{\lambda_1}}\,b'(t)\,v_{c_2}(t)\,dt\\ 
=&\Big[\frac{\sin((\tau-t)\sqrt{\lambda_1})}{\sqrt{\lambda_1}}\,b(t)\,v_{c_2}(t)\Big]_{t=0}^{t=\tau}\\ 
&-\int_0^\tau b(t)\,\partial_t\Big(\frac{\sin((\tau-t)\sqrt{\lambda_1})}{\sqrt{\lambda_1}}v_{c_2}(t)\Big)\,dt .
\end{align*}
The boundary term vanishes, since $\sin 0=0$ and $b(0)=0$. Furthermore, by \eqref{eq:v-bound} and \eqref{eq:vprime-bound}, for $0\leqslant t\leqslant\tau\leqslant L$,
\begin{align*}
\partial_t\Big(\frac{\sin((\tau-t)\sqrt{\lambda_1})}{\sqrt{\lambda_1}}v_{c_2}(t)\Big)
=&-\cos((\tau-t)\sqrt{\lambda_1})\,v_{c_2}(t)\\ &+\frac{\sin((\tau-t)\sqrt{\lambda_1})}{\sqrt{\lambda_1}}\,v_{c_2}'(t).
\end{align*}
The modulus of the latter is upper bounded by $M_0+\lambda_1^{-1/2}(\sqrt{\lambda_2}+LQM_0)\lesssim 1$, where $\sqrt{\lambda_2/\lambda_1}\leqslant C_{max}/C_{min}$ by Lemma~\ref{weylsn2}. Hence
$$\|S_{\lambda_1}[b'\, v_{c_2}]\|_{L^\infty(0,L)}\lesssim L\|b\|_{L^\infty(0,L)}\lesssim\delta .$$

Finally, by \eqref{eq:S-op-bound} on $(0,\tau)$, $|S_{\lambda_1}[q_{c_1} w](\tau)|\leqslant Q\lambda_1^{-1/2}\|w\|_{L^1(0,\tau)}$ for $\tau\in[0,L]$. Collecting the estimates, for $\tau\in[0,L]$, $|w(\tau)|\lesssim n\,\delta + Q\lambda_1^{-1/2}\|w\|_{L^1(0,\tau)}$, and Gr\"onwall's inequality together with $QL/\sqrt{\lambda_1}\leqslant 2Q/(\pi C_{min}^2)$ yields
\begin{equation}\label{eq:w-bound}
	\|v_{c_2}-v_{c_1}\|_{L^\infty(0,L)}\lesssim n\delta .
\end{equation}
By \eqref{eq:S-op-bound} and \eqref{eq:w-bound}, $\|S_{\lambda_1}[q_{c_1}w]\|_{L^\infty(0,L)}\leqslant\lambda_1^{-1/2}LQ\|w\|_{L^\infty(0,L)}\lesssim\delta$, and hence the three estimates above give
\begin{equation}\label{eq:rem-diff}
	\big\|S_{\lambda_2}[q_{c_2}v_{c_2}]-S_{\lambda_1}[q_{c_1}v_{c_1}]\big\|_{L^\infty(0,L)}\lesssim\delta .
\end{equation}

\smallskip
\noindent\textit{Step 4: boundary traces.}
Since $\|\psi_n^c\|_{L^2(0,\tau_c(1))}=\|\phi_n^c\|_{L^2_c}=1$ and $v_c=\psi_n^c/\phi_n^c(0)$ on $[0,\tau_c(1)]$, we have
$$N_c:=\|v_c\|_{L^2(0,\tau_c(1))}=\frac{1}{\phi_n^c(0)},\qquad C_{abs}^{-1/2}\leqslant N_c\leqslant \sqrt{L}\,M_0 ,$$
by Lemma~\ref{eigderabs} and \eqref{eq:v-bound}. Denote $N_i=N_{c_i}$. Then
$$|\phi_n^{c_2}(0)-\phi_n^{c_1}(0)|=\frac{|N_1-N_2|}{N_1N_2}= \frac{|N_1^2-N_2^2|}{N_1N_2(N_1+N_2)}\leqslant \frac{C_{abs}^{3/2}}{2}\,|N_1^2-N_2^2| .$$
Assume without loss of generality that $\tau_1(1)\geqslant\tau_2(1)$. Then
\begin{equation}\label{eq:N-split}
|N_1^2-N_2^2|
\leqslant \Big|\int_0^{\tau_2(1)} (v_{c_1}^2 - v_{c_2}^2) \, d\tau\Big| + \int_{\tau_2(1)}^{\tau_1(1)} |v_{c_1}|^2 \, d\tau ,
\end{equation}
where the second term is upper bounded by $M_0^2|\tau_1(1)-\tau_2(1)|\leqslant M_0^2C_{min}^{-2}\delta$ by Step 2. For the first term we substitute \eqref{eq:volterra} for $v_{c_1}$ and $v_{c_2}$ and expand:
\begin{align*}
v_{c_1}^2-v_{c_2}^2={}&\cos^2(\cdot\sqrt{\lambda_1})-\cos^2(\cdot\sqrt{\lambda_2})\\
&+2\big(\cos(\cdot\sqrt{\lambda_1})-\cos(\cdot\sqrt{\lambda_2})\big)S_{\lambda_1}[q_{c_1}v_{c_1}]\\
&+2\cos(\cdot\sqrt{\lambda_2})\big(S_{\lambda_1}[q_{c_1}v_{c_1}]-S_{\lambda_2}[q_{c_2}v_{c_2}]\big)\\
&+\big(S_{\lambda_1}[q_{c_1}v_{c_1}]-S_{\lambda_2}[q_{c_2}v_{c_2}]\big)\big(S_{\lambda_1}[q_{c_1}v_{c_1}]+S_{\lambda_2}[q_{c_2}v_{c_2}]\big).
\end{align*}
The first line is integrated explicitly: since $\cos^2(\tau\sqrt\lambda)=\tfrac12(1+\cos(2\tau\sqrt\lambda))$,
$$\int\limits_0^{\tau_2(1)}\big(\cos^2(\tau\sqrt{\lambda_1})-\cos^2(\tau\sqrt{\lambda_2})\big)\,d\tau
=\frac14\Big(\frac{\sin(2\tau_2(1)\sqrt{\lambda_1})}{\sqrt{\lambda_1}}-\frac{\sin(2\tau_2(1)\sqrt{\lambda_2})}{\sqrt{\lambda_2}}\Big),$$
and $\tau_2(1)\leqslant L$, so its absolute value is bounded by a constant multiple of $\delta$ due to \eqref{eq:kernel-diff}. The remaining three lines are bounded in $L^\infty(0,\tau_2(1))$ by \eqref{eq:cos-diff} and \eqref{eq:rem-bound}, by \eqref{eq:rem-diff}, and by \eqref{eq:rem-diff} and \eqref{eq:rem-bound}, respectively:
$$n\delta\cdot n^{-1}=\delta,\qquad \delta,\qquad \delta\cdot n^{-1}\leqslant\delta .$$
Integrating over $(0,\tau_2(1))$ and inserting into \eqref{eq:N-split} gives $|N_1^2-N_2^2|\lesssim\delta$. This proves \eqref{eq:trace-diff} and the lemma.
\end{proof}

Recall from Section~\ref{intro} that $\ell^\infty_s$, $s\in\R$, is the space of real sequences $a=\{a_n\}_{n\in\N}$ with
$$\|a\|_{\ell^\infty_s}=\sup \{n^{-s}|a_n|\,:\, n\in\N\}<+\infty .$$
Note that $\ell^\infty_s\hookrightarrow\ell^\infty_{s'}$ continuously for $s\leqslant s'$. For $\omega\in[0,1)$ define the \textit{eigenvalue map}
$$\Phi^\perp_\omega:\mathcal{C}\to\ell^\infty_2,\qquad \Phi^{\perp}_\omega(c)=\{\lambda_n^c(\omega)\}_{n\in\N}.$$
It is well defined. Indeed, $B_\omega\leqslant B_0$ on $V$, so the min-max principle gives $\lambda_n^c(\omega)\leqslant\lambda_n^c(0)\lesssim n^2$ uniformly in $c\in\mathcal{C}$ and $\omega\in[0,1)$ by Lemma~\ref{weylsn2}. For the same reason Lemma~\ref{weylsn2} extends to every $\omega\in[0,1)$ with a constant independent of $\omega$: its proof uses only that $B_\omega$ does not depend on $c$, and the final bound $\max_i\lambda_n^{c_i}(\omega)\leqslant\max_i\lambda_n^{c_i}(0)\lesssim n^2$. The weight $s=2$ is the smallest for which $\Phi^\perp_0(\mathcal{C})\subset\ell^\infty_s$.

By Proposition~\ref{lem2}, for each $c\in\mathcal{C}$ the map $\omega\mapsto\Phi^\perp_\omega(c)$ is componentwise analytic on $[0,1)$, and we set
$$D_\omega\Phi^\perp_\omega(c)\big|_{\omega=0}=\{(\lambda_n^c)'(0)\}_{n\in\N}.$$
By Lemma~\ref{eigderabs} this sequence is bounded, so $D_\omega\Phi^\perp_\omega(c)|_{\omega=0}\in\ell^\infty_0$. In this notation the forward spectral map reads
$$\Phi:\mathcal{C}\to\ell^\infty_2\times\ell^\infty_0,\qquad \Phi(c)=\Big(\Phi^\perp_0(c),\ D_\omega\Phi^\perp_\omega(c)\big|_{\omega=0}\Big),$$
where $\mathcal{C}$ is equipped with the $C^2(\overline\Omega)$ norm and $\ell^\infty_2\times\ell^\infty_0$ with the sum of the norms. 

We are now ready to prove the forward stability result, Proposition~\ref{4mainres}, which we restate.

\medskip
\noindent\textbf{Proposition~\ref{4mainres}.}
\textit{For any $c_1,c_2\in\mathcal{C}$,
$$\|\Phi(c_1)-\Phi(c_2)\|_{\ell^\infty_2\times\ell^\infty_0}\lesssim\|c_1-c_2\|_{C^2(\overline\Omega)} .$$}
\begin{proof}
Fix $c_1,c_2\in\mathcal{C}$. By the definition of the norms and of $\Phi$,
\begin{equation}\label{eq:Phi-norm}
\|\Phi(c_1)-\Phi(c_2)\|_{\ell^\infty_2\times\ell^\infty_0}=\sup_{n\in\N}n^{-2}|\lambda_n^{c_1}-\lambda_n^{c_2}|+\sup_{n\in\N}|(\lambda_n^{c_1})'(0)-(\lambda_n^{c_2})'(0)|.
\end{equation}
By Lemma~\ref{weylsn2}, $n^{-2}|\lambda_n^{c_1}-\lambda_n^{c_2}|\lesssim\|c_1-c_2\|_{C^2(\overline\Omega)}$, and by Lemma~\ref{eigderdiff0}, $|(\lambda_n^{c_1})'(0)-(\lambda_n^{c_2})'(0)|\lesssim\|c_1-c_2\|_{C^2(\overline\Omega)}$, both with constants independent of $n$. Taking the suprema over $n\in\N$ gives the claim.
\end{proof}

\medskip

\subsection{Inverse stability}\label{invstabr}
We denote the Neumann--to--Dirichlet map depending on a wave speed function $c(x)$ by $\Lambda^c$. Now, we prove auxiliary statements in order to obtain the inverse stability result.

\begin{Lemma}\label{hiddenreg}
\textit{There is $L>0$ depending only on $C_{min}$, $C_{max}$ and $T$ such that $\|\Lambda^c\|_{L^2(0,2T)\to H^1(0,2T)}\leqslant L$ for all $c\in\mathcal{C}$.}
\end{Lemma}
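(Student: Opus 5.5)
The plan is to prove the bound via a multiplier (Rellich/Morawetz type) identity. This is the one–dimensional form of the hidden regularity of the Neumann trace problem. Since $\Lambda^c[f]=u^f(\cdot,0)$, it suffices to show
$$\|u^f(\cdot,0)\|_{L^2(0,2T)}+\|\partial_tu^f(\cdot,0)\|_{L^2(0,2T)}\leqslant L\|f\|_{L^2(0,2T)}.$$
The first term is already controlled by the energy estimate of Section~\ref{fpee} combined with the boundedness of the trace $V\ni v\mapsto v(0)$, so the real content is the bound on the time derivative of the Dirichlet trace. By density of $H^1_\diamond(0,2T)$ in $L^2(0,2T)$, it is enough to treat $f\in H^1_\diamond(0,2T)$. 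For such $f$ the solution is strong, $u\in H^2((0,2T)\times\Omega)$, and all integrations by parts below are justified. Once the uniform bound holds on a dense set, it extends to the closure, because bounded sets of $H^1$ are weakly closed.

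For the identity, I write the equation as $c^{-2}\partial_t^2u-\partial_x^2u=0$ and multiply it by $m(x)\partial_xu$ with the multiplier $m(x)=1-x$, which satisfies $m(0)=1$, $m(1)=0$ and $m'=-1$. I then integrate over $(0,2T)\times\Omega$. Integrating by parts in $t$ in the first term and in $x$ in both terms produces the following pieces.
\begin{itemize}
\item An interior term $\tfrac12\int\!\!\int \bigl((c^{-2}m)'(\partial_tu)^2+m'(\partial_xu)^2\bigr)\,dx\,dt$, whose coefficients are bounded by a constant depending only on $C_{min}$ and $C_{max}$.
\item Time-endpoint terms $\bigl[\int_\Omega c^{-2}m\,\partial_tu\,\partial_xu\,dx\bigr]_{t=0}^{t=2T}$.
\item Boundary terms at $x=1$, which vanish because $m(1)=0$.
\item Boundary terms at $x=0$, equal to $\tfrac12\int_0^{2T}\bigl(c(0)^{-2}(\partial_tu(t,0))^2+f(t)^2\bigr)\,dt$ up to sign.
\end{itemize}
Since $c(0)=1$, the last item isolates $\|\partial_tu(\cdot,0)\|^2_{L^2(0,2T)}$. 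Everything else is bounded by $\sup_t\bigl(\|\partial_tu(t)\|^2_{L^2_c}+\|\partial_xu(t)\|^2_{L^2}\bigr)$ times a constant depending on $T$, plus $\|f\|_{L^2}^2$.

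It then remains to bound the energy $\sup_t\mathcal{E}(t)$ by $\|f\|^2_{L^2(0,2T)}$ with a constant uniform in $c\in\mathcal{C}$. The estimate stated in Section~\ref{fpee} (via Galerkin approximation) gives the right form, but I must check that its constant depends only on $C_{min}$, $C_{max}$ and $T$. The identity $\mathcal{E}'(t)=-f(t)\partial_tu(t,0)$ shows this is the delicate point: the standard energy argument for $L^2$ Neumann data itself needs the boundary velocity. I would therefore close both estimates simultaneously. The multiplier identity gives
$$\|\partial_tu(\cdot,0)\|^2\lesssim \|f\|^2+\sup_t\mathcal{E},$$
and the energy identity gives
$$\sup_t\mathcal{E}\leqslant \|f\|\,\|\partial_tu(\cdot,0)\|\leqslant \epsilon\|\partial_tu(\cdot,0)\|^2+\epsilon^{-1}\|f\|^2.$$
This is not yet closed, because the multiplier bound carries $\sup\mathcal{E}$ with a $T$-dependent constant. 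To resolve this I would first work on short time intervals of length $h$ depending only on $C_{min}$ and $C_{max}$: there the interior and endpoint terms carry a factor of $h$ or can be absorbed. I would then iterate over the intervals, each time taking the energy at the start of the interval as the new initial datum.

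I expect this coupling between the energy estimate and the boundary-trace estimate, together with tracking the uniform constants through the iteration, to be the main obstacle. If it becomes messy, a fallback is to use the explicit one–dimensional structure instead. After the travel-time change of variables of Lemma~\ref{eigderabs}, the equation becomes a wave equation with a potential $q$ bounded by $Q$, and it can be solved by d'Alembert's formula plus a Volterra perturbation. This gives $\partial_tu(t,0)$ as $-f(t)$ plus an integral operator with kernel bounded uniformly in terms of $Q$, $C_{min}$ and $T$.
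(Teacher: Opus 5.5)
Your proposal is correct. It differs from the paper's proof in how the energy and time-endpoint terms are controlled, and the step you flag as the main obstacle in fact closes at once.

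\textbf{How the paper proceeds.} The paper uses the same multiplier $m(x)=1-x$, but takes the energy estimate of Section~\ref{fpee} as given. That estimate controls $\partial_tu$ only in $L^2((0,4T)\times\Omega)$, so the endpoint term $\int_\Omega c^{-2}m\,\partial_tu(s)\,\partial_xu(s)\,dx$ cannot be bounded at a prescribed time. The paper therefore restricts to $f\in H^1_0(0,2T)$ and extends $f$ by zero to $(0,4T)$. It then picks, by averaging, a time $s_0\in[2T,4T]$ at which this term is $\lesssim\|f\|^2_{L^2(0,2T)}$.

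\textbf{Why your coupling closes directly.} You instead pair the multiplier identity with the energy identity $\mathcal{E}'(t)=-f(t)\,\partial_tu(t,0)$. Write $X=\|\partial_tu(\cdot,0)\|^2_{L^2(0,2T)}$; it is finite because $u\in H^2((0,2T)\times\Omega)$ for $f\in H^1_\diamond(0,2T)$.
\begin{itemize}
\item The multiplier identity gives $X\leqslant C_T\max_{[0,2T]}\mathcal{E}$, with $C_T$ depending only on $C_{min}$, $C_{max}$ and $T$. The $\|f\|^2$ term lies on the same side as $X$ and may simply be dropped.
\item The energy identity with $\mathcal{E}(0)=0$ gives $\max_{[0,2T]}\mathcal{E}\leqslant\|f\|_{L^2(0,2T)}X^{1/2}$.
\end{itemize}
Hence $X^{1/2}\leqslant C_T\|f\|_{L^2(0,2T)}$ and $\max\mathcal{E}\leqslant C_T\|f\|^2_{L^2(0,2T)}$. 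In your Young-inequality formulation this is just the choice $\epsilon=(2C_T)^{-1}$. A $T$-dependent $\epsilon$ is harmless because $L$ is allowed to depend on $T$. The short-interval iteration and the d'Alembert fallback are therefore unnecessary.

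\textbf{What each route buys.} Your route is self-contained. It derives the uniform energy bound together with the trace bound, with constants visibly depending only on $C_{min}$, $C_{max}$ and $T$. Through $|u(t,0)|\leqslant\|\partial_xu(t)\|_{L^2(\Omega)}$ it also gives the uniform $L^2$ bound for $\Lambda^c$, and it needs no extension beyond $2T$. The paper's route is more modular and needs only $L^2_tL^2_x$ control of $\partial_tu$. However, it rests on the estimate of Section~\ref{fpee}, whose uniformity in $c$ is asserted there rather than proved. Moreover, the standard energy derivation of that estimate, for the $V^*$-valued forcing $-f(t)\delta_0$, runs into exactly the boundary-velocity issue you identified.
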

\begin{proof}
By density of $H^1_0(0,2T)$ in $L^2(0,2T)$ and boundedness of $\Lambda^c$ on $L^2(0,2T)$, it suffices to show $\|\partial_t\Lambda^c[f]\|_{L^2(0,2T)}\lesssim\|f\|_{L^2(0,2T)}$ for $f\in H^1_0(0,2T)$. The proof relies on energy estimates on $(0,s)\times\Omega$ for some $s\geqslant 2T$. Applied on the time interval $(0,2T)$, they would require $s=2T$ and hence would contain terms evaluated at the fixed time $s=2T$, for which the energy estimate of Section~\ref{fpee} provides no bound. We therefore work on $(0,4T)$ and choose $s\in[2T,4T]$.

Fix $f\in H^1_0(0,2T)$ and extend it by zero to $(0,4T)$; the extension belongs to $H^1_\diamond(0,4T)$. Let $u\in H^2((0,4T)\times\Omega)$ be the strong solution of $(\mathcal{W}_f)$ on $(0,4T)$ given by Section~\ref{fpee}; by uniqueness, its restriction to $(0,2T)$ is the solution of the original problem, and $\partial_t\Lambda^c[f](t)=\partial_tu(t,0)$ on $(0,2T)$. The energy estimate on $(0,4T)$ reads
\begin{equation}\label{eq:hr-energy}
\|\partial_xu\|_{L^\infty(0,4T;L^2(\Omega))}+\|\partial_tu\|_{L^2((0,4T)\times\Omega)}\lesssim\|f\|_{L^2(0,2T)}.
\end{equation}

Fix $s\in(0,4T]$ and set $m(x)=1-x$. Multiply the equation $c^{-2}\partial_t^2u-\partial_x^2u=0$ by $m\,\partial_xu$ and integrate over $(0,s)\times\Omega$. Since $u\in H^2((0,4T)\times\Omega)$, the products $c^{-2}m\,\partial_tu\,\partial_xu$, $c^{-2}m|\partial_tu|^2$ and $m|\partial_xu|^2$ belong to $W^{1,1}((0,4T)\times\Omega)$. Hence the integrations by parts below are justified. Writing $c^{-2}m\,\partial_t^2u\,\partial_xu=\partial_t\big(c^{-2}m\,\partial_tu\,\partial_xu\big)-\tfrac12c^{-2}m\,\partial_x\big(|\partial_tu|^2\big)$ and using $u(0)=\partial_tu(0)=0$, and ${c(0)=1}$, we get
\begin{align*}
	\int\limits_0^s\!\!\int\limits_\Omega c^{-2}m\,\partial_t^2u\,\partial_xu\,dx\,dt=&\int\limits_\Omega c^{-2}m\,\partial_tu(s)\,\partial_xu(s)\,dx+\frac12\int\limits_0^s|\partial_tu(t,0)|^2\,dt\\
	&+\frac12\int\limits_0^s\!\!\int\limits_\Omega\partial_x(c^{-2}m)|\partial_tu|^2\,dx\,dt .
\end{align*}
Similarly, $m\,\partial_x^2u\,\partial_xu=\tfrac12\partial_x\big(m|\partial_xu|^2\big)+\tfrac12|\partial_xu|^2$ and $\partial_xu(t,0)=f(t)$ give
$$-\int\limits_0^s\!\!\int\limits_\Omega m\,\partial_x^2u\,\partial_xu\,dx\,dt=\frac12\int\limits_0^s|f|^2\,dt-\frac12\int\limits_0^s\!\!\int\limits_\Omega|\partial_xu|^2\,dx\,dt .$$
Adding the two identities, the left-hand side vanishes, whence
\begin{align*}
	\frac12\int\limits_0^s|\partial_tu(t,0)|^2\,dt=&-\int\limits_\Omega c^{-2}m\,\partial_tu(s)\,\partial_xu(s)\,dx-\frac12\int\limits_0^s\!\!\int\limits_\Omega\partial_x(c^{-2}m)|\partial_tu|^2\,dx\,dt\\
	&-\frac12\int\limits_0^s|f|^2\,dt+\frac12\int\limits_0^s\!\!\int\limits_\Omega|\partial_xu|^2\,dx\,dt .
\end{align*}
Dropping the nonpositive term $-\frac12\int_0^s|f|^2\,dt$, we obtain
\begin{align}
\frac12\int\limits_0^s|\partial_tu(t,0)|^2\,dt\leqslant &\Bigg|\int\limits_\Omega c^{-2}m\,\partial_tu(s)\,\partial_xu(s)\,dx\Bigg|\nonumber\\
&+\Big(\frac{C_{max}}{C_{min}^3}+\frac{1}{2C_{min}^2}\Big)\int\limits_0^s\!\!\int\limits_\Omega|\partial_tu|^2\,dx\,dt\nonumber\\
&+\frac12\int\limits_0^s\!\!\int\limits_\Omega|\partial_xu|^2\,dx\,dt,\label{eq:hr-multiplier}
\end{align}
where $|\partial_x(c^{-2}m)|=|2c'c^{-3}m+c^{-2}|\leqslant 2C_{max}C_{min}^{-3}+C_{min}^{-2}$ since $0\leqslant m\leqslant1$.

Denote $J(s)=\int_\Omega c^{-2}m\,\partial_tu(s)\,\partial_xu(s)\,dx$, $s\in(0,4T]$. By the Cauchy--Schwarz inequality in $x$ and then in $s$, together with \eqref{eq:hr-energy},
\begin{align*}
\int\limits_{2T}^{4T}|J(s)|\,ds&\leqslant C_{min}^{-2}\int\limits_{2T}^{4T}\|\partial_tu(s)\|_{L^2(\Omega)}\|\partial_xu(s)\|_{L^2(\Omega)}\,ds\\
&\lesssim\|\partial_tu\|_{L^2((0,4T)\times\Omega)}\|\partial_xu\|_{L^\infty(0,4T;L^2(\Omega))}\lesssim\|f\|^2_{L^2(0,2T)} .
\end{align*}
Hence there is $s_0\in[2T,4T]$ such that $|J(s_0)|\leqslant\frac{1}{2T}\int_{2T}^{4T}|J(s)|\,ds\lesssim\|f\|^2_{L^2(0,2T)}$ (otherwise $|J|>\frac{1}{2T}\int_{2T}^{4T}|J(s)|\,ds$ on $[2T,4T]$, and integrating over $[2T,4T]$ gives a contradiction). The remaining two terms on the right-hand side of \eqref{eq:hr-multiplier} are bounded, for every $s\in(0,4T]$, by
$$\Big(\frac{C_{max}}{C_{min}^3}+\frac{1}{2C_{min}^2}\Big)\|\partial_tu\|^2_{L^2((0,4T)\times\Omega)}+2T\|\partial_xu\|^2_{L^\infty(0,4T;L^2(\Omega))}\lesssim\|f\|^2_{L^2(0,2T)}$$
due to \eqref{eq:hr-energy}. Since $s_0\geqslant 2T$, the estimate \eqref{eq:hr-multiplier} with $s=s_0$ yields
$$\|\partial_tu(\cdot,0)\|^2_{L^2(0,2T)}\leqslant\int\limits_0^{s_0}|\partial_tu(t,0)|^2\,dt\lesssim\|f\|^2_{L^2(0,2T)} .$$
Since $\partial_t\Lambda^c[f]=\partial_tu(\cdot,0)$ on $(0,2T)$, this gives $\|\partial_t\Lambda^c[f]\|_{L^2(0,2T)}\lesssim\|f\|_{L^2(0,2T)}$ with a constant depending only on $C_{min}$, $C_{max}$ and $T$.
\end{proof}

Now, we prove an auxiliary lemma.
\begin{Lemma}\label{interpolateA}
\textit{Let $A:L^2(0,2T)\to H^1(0,2T)$ be bounded. Then}
$$\|A\|_{L^2(0,2T)\to L^2(0,2T)}\lesssim\|A\|^{1/2}_{L^2(0,2T)\to H^{-1}(0,2T)}\,\|A\|^{1/2}_{L^2(0,2T)\to H^{1}(0,2T)} .$$
\end{Lemma}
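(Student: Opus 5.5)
The plan is to prove the interpolation inequality pointwise in $f$ and then take suprema. Fix $f\in L^2(0,2T)$ and set $g=Af\in H^1(0,2T)$. Throughout, $H^{-1}(0,2T)$ is understood as the dual of $H^1_0(0,2T)$ with $L^2$ as pivot space, and the norms of $H^1$ and $H^{-1}$ are the standard ones. The goal is the scalar inequality
$$\|g\|^2_{L^2(0,2T)}\lesssim \|g\|_{H^{-1}(0,2T)}\,\|g\|_{H^1(0,2T)},\qquad g\in H^1(0,2T).$$
Given this, we bound $\|g\|_{H^{-1}}\leqslant\|A\|_{L^2\to H^{-1}}\|f\|_{L^2}$ and $\|g\|_{H^1}\leqslant\|A\|_{L^2\to H^1}\|f\|_{L^2}$. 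Taking the square root and the supremum over $\|f\|_{L^2}\leqslant1$ then gives the claim.

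The difficulty is that $g$ is not in $H^1_0$, so the naive duality $\|g\|^2_{L^2}=\langle g,g\rangle\leqslant\|g\|_{H^{-1}}\|g\|_{H^1_0}$ is unavailable. I would handle this with a cutoff.

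\textbf{Step 1.} Choose $\chi\in C_c^\infty(0,2T)$ with $0\leqslant\chi\leqslant1$ and $\chi=1$ on $(\epsilon,2T-\epsilon)$, for a small $\epsilon\in(0,T)$ to be chosen. We need $\|\chi'\|_\infty\lesssim\epsilon^{-1}$. Since $\chi g\in H^1_0(0,2T)$ and $\chi^2\leqslant \chi$, we get
$$\int\chi g^2\,dt=\langle g,\chi g\rangle\leqslant\|g\|_{H^{-1}}\|\chi g\|_{H^1}\lesssim \epsilon^{-1}\|g\|_{H^{-1}}\|g\|_{H^1}.$$

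\textbf{Step 2.} It remains to control $\int(1-\chi)g^2$, which lives on two intervals of length $\epsilon$. By the one-dimensional embedding $\|g\|_{L^\infty}\lesssim\|g\|_{H^1}$, with constant depending only on $T$, this part is at most $2\epsilon\|g\|^2_{L^\infty}\lesssim\epsilon\|g\|_{H^1}^2$.

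\textbf{Step 3.} Combining the two steps gives
$$\|g\|_{L^2}^2\lesssim\epsilon^{-1}ab+\epsilon b^2,\qquad a=\|g\|_{H^{-1}},\ b=\|g\|_{H^1}.$$
Optimizing at $\epsilon=(a/b)^{1/2}$ yields only $\|g\|^2_{L^2}\lesssim a^{1/2}b^{3/2}$, which is weaker than needed. This is the main obstacle: a crude cutoff loses.

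\textbf{Fix: reflection.} Instead, I would use an extension. Extend $g$ by even reflection across both endpoints to a function $Eg$ on $(-2T,4T)$, and multiply by a fixed cutoff equal to $1$ on $[0,2T]$ and supported in $(-2T,4T)$. This gives $G\in H^1_0(-2T,4T)$ with $G=g$ on $(0,2T)$ and $\|G\|_{H^1}\lesssim\|g\|_{H^1}$.

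The key point is that the same reflection-plus-cutoff operator is bounded $H^{-1}(0,2T)\to H^{-1}(-2T,4T)$. To see this, I would argue by duality. The adjoint takes $\varphi\in H^1_0(-2T,4T)$ to the folded function
$$\varphi(t)+\varphi(-t)+\varphi(4T-t)$$
(weighted by the cutoff). This folded function is not in $H^1_0(0,2T)$, so the duality needs care. The fix I would try is to use odd reflection for the $H^{-1}$ side: odd reflection of $g$ has a folded adjoint that vanishes at the endpoints. The cost is that the odd extension of $g$ is not in $H^1$ unless $g$ vanishes at the endpoints.

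The cleaner alternative is to bypass extensions and use the intrinsic interpolation identity $[H^{-1},H^1]_{1/2}=L^2$ on the interval. I would cite this as a standard fact (Lions--Magenes), valid with $H^{-1}=(H^1_0)^*$, and then conclude by the interpolation inequality for operators. I expect justifying this interpolation identity, equivalently the endpoint issue above, to be the genuine technical step. The constant depends only on $T$.
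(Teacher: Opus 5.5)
Your reduction to the scalar inequality $\|g\|^2_{L^2(0,2T)}\lesssim\|g\|_{H^{-1}(0,2T)}\|g\|_{H^1(0,2T)}$ for all $g\in H^1(0,2T)$ is correct. The lemma is in fact equivalent to it, via rank-one $A$. You also correctly locate the difficulty in $g\notin H^1_0$.

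However, the proposal never proves this inequality. The cutoff attempt is abandoned, the reflection attempt is left open, and the final step cites $[H^{-1},H^1]_{1/2}=L^2$, which you yourself flag as unjustified. That citation is not innocuous. The readily available Hilbert-scale identity is $[H^{-1},H^1_0]_{1/2}=L^2$, from the Dirichlet Laplacian. By monotonicity it only yields $L^2\hookrightarrow[H^{-1},H^1]_{1/2}$, which is the opposite of the embedding you need. The version with $H^1$ is true, but its standard proof uses an extension operator bounded simultaneously on $H^1$ and on $(H^1_0)^*$ --- precisely the endpoint issue you could not resolve. It is also at least as strong as the inequality you need. So, as written, the proposal defers the whole content of the lemma.

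The missing idea is one step away from your Steps 1--3, which lose only through two wasteful bounds. Write $E_\epsilon$ for the two boundary layers. Estimate $\|\chi'g\|_{L^2}\leqslant\epsilon^{-1}\|g\|_{L^2(E_\epsilon)}\lesssim\epsilon^{-1/2}\|g\|_{L^\infty}$ rather than $\epsilon^{-1}\|g\|_{H^1}$. Then control $\|g\|^2_{L^\infty}$ by the Gagliardo--Nirenberg (Agmon) inequality $\|g\|^2_{L^\infty}\lesssim\|g\|_{L^2}\|g\|_{H^1}$ rather than by $\|g\|^2_{H^1}$. With $a,b$ as in your Step 3 this gives
\[
\|g\|^2_{L^2}\lesssim\epsilon\,\|g\|_{L^2}\,b+a\bigl(b+\epsilon^{-1/2}\|g\|^{1/2}_{L^2}b^{1/2}\bigr).
\]
The right choice is $\epsilon=\rho\|g\|_{L^2}/b$ with a small fixed $\rho$, not $(a/b)^{1/2}$. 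The boundary-layer term then becomes $\rho\|g\|^2_{L^2}$ and is absorbed into the left-hand side rather than balanced, while the remaining term is $(1+\rho^{-1/2})ab$. This is exactly the paper's proof.

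Your reflection idea can also be repaired. Near $t=0$ extend by $Eg(t)=-3g(-t)+4g(-2t)$ for $t<0$, use the mirror image at $t=2T$, and cut off.
\begin{itemize}
\item Since $-3+4=1$, $Eg$ is continuous. Hence the cut-off extension lies in $H^1_0$ of the larger interval, with norm $\lesssim\|g\|_{H^1}$.
\item The adjoint folds a test function $\varphi$ into a function equal near $s=0$ to $\varphi(s)-3\varphi(-s)+2\varphi(-s/2)$, which vanishes at $s=0$. Hence the extension is also bounded on $(H^1_0)^*$.
\end{itemize}
Testing the extension against itself then gives the scalar inequality.
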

\begin{proof}
For $v\in H^1_0(0,2T)$ the inequality $\|v\|^2_{L^2(0,2T)}\leqslant\|v\|_{H^{-1}(0,2T)}\|v\|_{H^1(0,2T)}$ is immediate, since $v$ may be tested against itself in the duality between $H^{-1}(0,2T)$ and $H^1_0(0,2T)$. For $v\in H^1(0,2T)$ this argument is unavailable, as $v$ need not vanish at the endpoints, and we test $v$ against a truncation instead.

Let $v\in H^1(0,2T)$ and $h\in(0,T)$, and let $\zeta_h:[0,2T]\to[0,1]$ be Lipschitz, equal to one on $[h,2T-h]$, vanishing at the endpoints and linear in between, so that $|\zeta_h'|\leqslant h^{-1}$ and $\zeta_h'$ is supported in $E_h=(0,h)\cup(2T-h,2T)$, with $|E_h|=2h$. Since $\zeta_hv\in H^1_0(0,2T)$ and $1-\zeta_h$ vanishes outside $E_h$,
\begin{align*}
\|v\|^2_{L^2(0,2T)}&=\int_{E_h}v^2(1-\zeta_h)\,dt+\int_0^{2T}v\,\zeta_hv\,dt\\
&\leqslant 2h\|v\|^2_{L^\infty(0,2T)}+\|v\|_{H^{-1}(0,2T)}\|\zeta_hv\|_{H^1(0,2T)} .
\end{align*}
By the product rule,
\begin{equation}\label{prrulzethv}
	\|\zeta_hv\|_{H^1(0,2T)}\lesssim\|v\|_{H^1(0,2T)}+h^{-1}\|v\|_{L^2(E_h)}\lesssim\|v\|_{H^1(0,2T)}+h^{-1/2}\|v\|_{L^\infty(0,2T)}.
\end{equation}
By the Gagliardo--Nirenberg inequality on a bounded interval, 
$$\|v\|_{L^\infty(0,2T)}\lesssim\|v\|^{1/2}_{L^2(0,2T)}\|v'\|^{1/2}_{L^2(0,2T)}+\|v\|_{L^2(0,2T)},$$
and squaring this, together with $\|v\|^2_{L^2(0,2T)}\leqslant\|v\|_{L^2(0,2T)}\|v\|_{H^1(0,2T)}$, gives $\|v\|^2_{L^\infty(0,2T)}\lesssim\|v\|_{L^2(0,2T)}\|v\|_{H^1(0,2T)}$. Inserting this into \eqref{prrulzethv} and both into the first display for $\|v\|^2_{L^2(0,2T)}$, we obtain
\begin{align*}
\|v\|^2_{L^2(0,2T)}\lesssim &\, h\,\|v\|_{L^2(0,2T)}\|v\|_{H^1(0,2T)}\\
& +\|v\|_{H^{-1}(0,2T)}\Bigl(\|v\|_{H^1(0,2T)}+h^{-1/2}\|v\|^{1/2}_{L^2(0,2T)}\|v\|^{1/2}_{H^1(0,2T)}\Bigr).
\end{align*}
Assume $v\neq0$ and take $h=\rho\,\|v\|_{L^2(0,2T)}/\|v\|_{H^1(0,2T)}$ with a constant $\rho\in(0,T]$; this is admissible, since the quotient does not exceed one. The first term on the right becomes $\rho\,\|v\|^2_{L^2(0,2T)}$ up to the implicit constant, so for $\rho$ small enough it is absorbed into the left-hand side; and $h^{-1/2}\|v\|^{1/2}_{L^2(0,2T)}\|v\|^{1/2}_{H^1(0,2T)}=\rho^{-1/2}\|v\|_{H^1(0,2T)}$. Hence
$$\|v\|^2_{L^2(0,2T)}\lesssim\|v\|_{H^{-1}(0,2T)}\|v\|_{H^1(0,2T)},\qquad v\in H^1(0,2T),$$
with a constant depending only on $T$. Let $v=A[f]$, $f\in L^2(0,2T)$, so taking the supremum over $\|f\|_{L^2(0,2T)}\leqslant1$ gives the required estimate.
\end{proof}

Next, we show that the Neumann-to-Dirichlet map depends continuously on the spectral data. Recall that $\lesssim_\theta$ allows the constant to depend on $\theta$.

\begin{Proposition}\label{NDstab}
\textit{The map $\mathcal{D}\ni\Phi(c)\mapsto\Lambda^c\in\mathscr{B}(L^2(0,2T))$ is H\"older continuous with every exponent $\theta<\frac12$, that is, for any $\theta\in(0,\frac12)$ and $c_1,c_2\in\mathcal{C}$,
$$\|\Lambda^{c_1}-\Lambda^{c_2}\|_{L^2(0,2T)\to L^2(0,2T)}\lesssim_\theta\|\Phi(c_1)-\Phi(c_2)\|_{\ell^\infty_2\times\ell^\infty_0}^{\theta},$$
where the constant depends on $\theta$ and is not bounded as $\theta\to\frac12$. Moreover, if $\varepsilon=\|\Phi(c_1)-\Phi(c_2)\|_{\ell^\infty_2\times\ell^\infty_0}\leqslant\frac12$, then
$$\|\Lambda^{c_1}-\Lambda^{c_2}\|_{L^2(0,2T)\to L^2(0,2T)}\lesssim\big(\varepsilon\log(1/\varepsilon)\big)^{1/2}.$$}
\end{Proposition}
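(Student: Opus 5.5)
The plan is to combine the representation of Proposition~\ref{1mainres} with the interpolation Lemma~\ref{interpolateA} and the uniform hidden-regularity bound of Lemma~\ref{hiddenreg}. Set $A=\Lambda^{c_1}-\Lambda^{c_2}$. By Lemma~\ref{hiddenreg}, $\|A\|_{L^2\to H^1}\leqslant 2L$. By Lemma~\ref{interpolateA},
$$\|A\|_{L^2\to L^2}\lesssim\|A\|^{1/2}_{L^2\to H^{-1}}.$$
It therefore suffices to show the weak-norm estimate
$$\|A\|_{L^2(0,2T)\to H^{-1}(0,2T)}\lesssim_\theta\varepsilon^{2\theta},$$
and, for $\varepsilon\leqslant\frac12$, the refined bound $\lesssim\varepsilon\log(1/\varepsilon)$.

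By Proposition~\ref{1mainres}, extended to $(0,2T)$ since $T$ is arbitrary, we have
$$A[f]=\sum_n\Bigl(\mu^1_nS_{\lambda^1_n}-\mu^2_nS_{\lambda^2_n}\Bigr)[f],\qquad \mu^i_n=(\lambda^{c_i}_n)'(0).$$
Split each term as
$$(\mu^1_n-\mu^2_n)S_{\lambda^1_n}+\mu^2_n\bigl(S_{\lambda^1_n}-S_{\lambda^2_n}\bigr).$$
The key is to estimate $S_\lambda$ and $\partial_\lambda S_\lambda$ in the weak norm $L^2\to H^{-1}$, gaining powers of $\lambda$ from oscillation. Write $S_\lambda[f]=k_\lambda*f$ with $k_\lambda(s)=\lambda^{-1/2}\sin(s\sqrt\lambda)$. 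Since $k_\lambda=\partial_s\bigl(-\lambda^{-1}\cos(s\sqrt\lambda)\bigr)$, we get
$$S_\lambda f=\partial_t\bigl(g_\lambda*f\bigr)+\lambda^{-1}f,\qquad g_\lambda(s)=-\lambda^{-1}\cos(s\sqrt\lambda).$$
Hence
$$\|S_\lambda\|_{L^2\to H^{-1}}\lesssim\lambda^{-1}\asymp n^{-2},$$
using $\|\partial_t h\|_{H^{-1}}\lesssim\|h\|_{L^2}$ and Lemma~\ref{weylsn2}. Similarly $\partial_\lambda k_\lambda$ is a combination of $s\cos(s\sqrt\lambda)/\lambda$ and $\lambda^{-3/2}\sin$, and integrating by parts once more gives
$$\|\partial_\lambda S_\lambda\|_{L^2\to H^{-1}}\lesssim\lambda^{-3/2}\asymp n^{-3}.$$
The mean value theorem then gives
$$\|S_{\lambda^1_n}-S_{\lambda^2_n}\|_{L^2\to H^{-1}}\lesssim n^{-3}|\lambda^1_n-\lambda^2_n|\leqslant n^{-1}\varepsilon.$$
For the first piece we get $|\mu^1_n-\mu^2_n|n^{-2}\leqslant\varepsilon n^{-2}$, which is summable. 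The second piece gives $C_{abs}\,n^{-1}\varepsilon$ by Lemma~\ref{eigderabs}, which is \emph{not} summable. This is the main obstacle, and it is what produces the logarithm.

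To handle the divergence, I cut the sum at $N$. For $n\leqslant N$, the bound above gives a total of $\lesssim\varepsilon(1+\log N)$. For $n>N$, I do not take differences and instead bound each term crudely by $\|S_\lambda\|_{L^2\to H^{-1}}\lesssim n^{-2}$, so the tail is
$$\sum_{n>N}\bigl(|\mu^1_n|+|\mu^2_n|\bigr)n^{-2}\lesssim N^{-1}.$$
Thus
$$\|A\|_{L^2\to H^{-1}}\lesssim\varepsilon\log N+N^{-1}.$$
Choosing $N\sim\varepsilon^{-1}$ gives $\lesssim\varepsilon\log(1/\varepsilon)$ for $\varepsilon\leqslant\frac12$. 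Taking square roots yields the second claim. Since $\varepsilon\log(1/\varepsilon)\lesssim_\theta\varepsilon^{2\theta}$ for $2\theta<1$, and for $\varepsilon>\frac12$ the uniform bound $\|A\|\leqslant2\sup_c\|\Lambda^c\|$ suffices, the first claim follows.

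One point to check is that the partial sums converge in $\mathscr{B}(L^2,H^{-1})$, so that the termwise estimates are legitimate. The $n^{-2}$ bound gives absolute convergence in that norm, and this convergence is consistent with the strong $L^2$ convergence established in Proposition~\ref{1mainres}.
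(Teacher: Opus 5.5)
Your proposal is correct and follows the paper's proof essentially step by step. The common steps are: an $H^{-1}$ bound of order $\varepsilon\log(1/\varepsilon)$ obtained from an antiderivative of the sine kernel; the split into a summable $\varepsilon n^{-2}$ part and a logarithmically divergent $n^{-1}\varepsilon$ part, cut at $N\sim\varepsilon^{-1}$; then Lemma~\ref{interpolateA} combined with Lemma~\ref{hiddenreg}; and the uniform bound for $\varepsilon>\frac12$.

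The only differences are cosmetic. The paper uses the antiderivative $(1-\cos(\rho\sqrt\lambda))/\lambda$, which vanishes at $0$ and so avoids your extra $\lambda^{-1}f$ term. It also bounds $g_\lambda-g_\mu$ pointwise by the mean value theorem rather than bounding $\partial_\lambda S_\lambda$ in operator norm.
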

\begin{proof}
Set $\varepsilon=\|\Phi(c_1)-\Phi(c_2)\|_{\ell^\infty_2\times\ell^\infty_0}$. By \eqref{eq:Phi-norm}, $|\lambda^{c_1}_n-\lambda^{c_2}_n|\leqslant n^2\varepsilon$ and $|(\lambda^{c_1}_n)'(0)-(\lambda^{c_2}_n)'(0)|\leqslant\varepsilon$ for all $n\in\N$. Besides this, we use only $|(\lambda^{c_i}_n)'(0)|\leqslant C_{abs}$ from Lemma~\ref{eigderabs} and $\lambda^{c_i}_n\asymp n^2$ from Lemma~\ref{weylsn2}.

We first show
\begin{equation}\label{eq:Hminus1}
\|\Lambda^{c_1}-\Lambda^{c_2}\|_{L^2(0,2T)\to H^{-1}(0,2T)}\lesssim\varepsilon\log(1/\varepsilon),\qquad\varepsilon\leqslant\tfrac12 .
\end{equation}
Fix $f\in L^2(0,2T)$ and $\varphi\in H^1_0(0,2T)$ with $\|\varphi\|_{H^1(0,2T)}\leqslant1$. Write $S_\lambda[f](t)=\int_0^t k_\lambda(t-s)f(s)\,ds$ with $k_\lambda(\rho)=\frac{\sin(\rho\sqrt\lambda)}{\sqrt\lambda}$, and set
$$g_\lambda(\rho)=\int\limits_0^\rho k_\lambda(\zeta)\,d\zeta=\frac{1-\cos(\rho\sqrt\lambda)}{\lambda},\qquad \rho\geqslant0,$$
so that $g_\lambda'=k_\lambda$ and $g_\lambda(0)=0$. The integral $\int_0^{2T}S_\lambda[f](t)\varphi(t)\,dt$ is a double integral over the triangle $\{(s,t):0<s<t<2T\}$. By Fubini's theorem, integrating first in $t$, which for fixed $s$ ranges over $(s,2T)$, then substituting $\rho=t-s$ in the inner integral, and finally integrating by parts in $\rho$, where the boundary terms vanish since $g_\lambda(0)=0$ and $\varphi(2T)=0$, we get
\begin{align}
\int\limits_0^{2T}S_\lambda[f](t)\varphi(t)\,dt&=\int\limits_0^{2T}f(s)\int\limits_s^{2T}k_\lambda(t-s)\varphi(t)\,dt\,ds\nonumber\\
&=\int\limits_0^{2T}f(s)\int\limits_0^{2T-s}k_\lambda(\rho)\varphi(\rho+s)\,d\rho\,ds\nonumber\\
&=-\int\limits_0^{2T}f(s)\int\limits_0^{2T-s}g_\lambda(\rho)\,\varphi'(\rho+s)\,d\rho\,ds .\label{eq:S-ibp}
\end{align}
Let $\lambda,\mu\asymp n^2$. Since $|g_\lambda|\leqslant 2\lambda^{-1}$, $\|g_\lambda\|_{L^\infty(0,2T)}\lesssim n^{-2}$ and $\|g_\lambda-g_\mu\|_{L^\infty(0,2T)}\lesssim n^{-2}$. On the other hand, for fixed $\rho\in[0,2T]$ the map $\lambda\mapsto g_\lambda(\rho)$ is $C^1$ on $(0,+\infty)$ with
$$|\partial_\lambda g_\lambda(\rho)|=\Big|-\frac{1-\cos(\rho\sqrt\lambda)}{\lambda^2}+\frac{\rho\sin(\rho\sqrt\lambda)}{2\lambda^{3/2}}\Big|\leqslant\frac{2}{\lambda^2}+\frac{T}{\lambda^{3/2}}.$$
By the mean value theorem, $g_\lambda(\rho)-g_\mu(\rho)=\partial_\lambda g_{\tilde\lambda}(\rho)\,(\lambda-\mu)$ for some $\tilde\lambda$ between $\lambda$ and $\mu$; as $\tilde\lambda\asymp n^2$, we have $|\partial_\lambda g_{\tilde\lambda}(\rho)|\lesssim n^{-3}$. Together with the bound $\|g_\lambda-g_\mu\|_{L^\infty(0,2T)}\lesssim n^{-2}$ above,
$$\|g_\lambda-g_\mu\|_{L^\infty(0,2T)}\lesssim\min\{n^{-2},\,n^{-3}|\lambda-\mu|\}.$$
Now take $\lambda=\lambda^{c_1}_n$ and $\mu=\lambda^{c_2}_n$, so that $|\lambda-\mu|\lesssim n^2\varepsilon$ and the second bound becomes $\min\{n^{-2},\,n^{-1}\varepsilon\}$. 

We now return to \eqref{eq:S-ibp}. For $s\in(0,2T)$ the inner integral satisfies
$$\Bigg|\int\limits_0^{2T-s}g_\lambda(\rho)\,\varphi'(\rho+s)\,d\rho\Bigg|\leqslant\|g_\lambda\|_{L^\infty(0,2T)},$$
since $\|\varphi'\|_{L^1(0,2T)}\leqslant\|\varphi\|_{H^1(0,2T)}$. Hence, 
$$\Bigg|\int\limits_0^{2T}S_\lambda[f](t)\varphi(t)\,dt\Bigg|\lesssim\|g_\lambda\|_{L^\infty(0,2T)}\|f\|_{L^2(0,2T)} .$$
Subtracting \eqref{eq:S-ibp} for $\mu$ from \eqref{eq:S-ibp} for $\lambda$ replaces $g_\lambda$ by $g_\lambda-g_\mu$ in the inner integral, and the same argument gives
$$\Bigg|\int\limits_0^{2T}\big(S_\lambda-S_\mu\big)[f](t)\varphi(t)\,dt\Bigg|\lesssim\|g_\lambda-g_\mu\|_{L^\infty(0,2T)}\|f\|_{L^2(0,2T)} .$$
Taking the supremum over $\varphi$ and inserting the bounds on $g_\lambda$ and $g_\lambda-g_\mu$ for $\lambda=\lambda^{c_1}_n$, $\mu=\lambda^{c_2}_n$, we obtain
\begin{align}
\|S_{\lambda^{c_1}_n}[f]\|_{H^{-1}(0,2T)}&\lesssim n^{-2}\|f\|_{L^2(0,2T)},\nonumber\\
\big\|\big(S_{\lambda^{c_1}_n}-S_{\lambda^{c_2}_n}\big)[f]\big\|_{H^{-1}(0,2T)}&\lesssim\min\{n^{-2},\,n^{-1}\varepsilon\}\,\|f\|_{L^2(0,2T)} .\label{eq:S-Hminus1}
\end{align}

By Proposition~\ref{1mainres}, $\Lambda^{c_1}[f]-\Lambda^{c_2}[f]=I_1+I_2$, where
$$I_1=\sum_{n\in\N}\big((\lambda^{c_1}_n)'(0)-(\lambda^{c_2}_n)'(0)\big)\,S_{\lambda^{c_1}_n}[f],\quad I_2=\sum_{n\in\N}(\lambda^{c_2}_n)'(0)\,\big(S_{\lambda^{c_1}_n}-S_{\lambda^{c_2}_n}\big)[f].$$
The first bound in \eqref{eq:S-Hminus1} gives $\|I_1\|_{H^{-1}(0,2T)}\lesssim\varepsilon\sum_{n\in\N} n^{-2}\|f\|_{L^2(0,2T)}\lesssim\varepsilon\|f\|_{L^2(0,2T)}$. For $I_2$, let $N=\lceil\varepsilon^{-1}\rceil$ and use the second bound in \eqref{eq:S-Hminus1} with $n^{-1}\varepsilon$ for $n\leqslant N$ and with $n^{-2}$ for $n>N$:
\begin{align*}
\|I_2\|_{H^{-1}(0,2T)}&\lesssim C_{abs}\Big(\varepsilon\sum_{n\leqslant N}\frac1n+\sum_{n>N}\frac1{n^2}\Big)\|f\|_{L^2(0,2T)}\\
&\lesssim\big(\varepsilon(1+\log N)+N^{-1}\big)\|f\|_{L^2(0,2T)}\\
&\lesssim\varepsilon\log(1/\varepsilon)\|f\|_{L^2(0,2T)},
\end{align*}
where the last step uses $\varepsilon\leqslant\frac12$. This proves \eqref{eq:Hminus1}. By Lemma~\ref{hiddenreg}, the operator $A=\Lambda^{c_1}-\Lambda^{c_2}$ maps $L^2(0,2T)$ into $H^1(0,2T)$ with $\|A\|_{L^2(0,2T)\to H^1(0,2T)}\leqslant 2L$, so Lemma~\ref{interpolateA} applies and, together with \eqref{eq:Hminus1}, gives
$$\|\Lambda^{c_1}-\Lambda^{c_2}\|_{L^2(0,2T)\to L^2(0,2T)} \lesssim\big(\varepsilon\log(1/\varepsilon)\big)^{1/2},$$
which is the second estimate of the proposition.

Finally, fix $\theta\in(0,\frac12)$. For $\varepsilon\leqslant\frac12$, the second estimate and $\varepsilon\log(1/\varepsilon)\leqslant\big(e(1-2\theta)\big)^{-1}\varepsilon^{2\theta}$ give the first estimate with a constant proportional to $(1-2\theta)^{-1/2}$, which is unbounded as $\theta\to\frac12$. For $\varepsilon>\frac12$, Lemma~\ref{hiddenreg} and $H^1(0,2T)\hookrightarrow L^2(0,2T)$ give
$$\|\Lambda^{c_1}-\Lambda^{c_2}\|_{L^2(0,2T)\to L^2(0,2T)}\leqslant2L\leqslant2^{1+\theta}L\,\varepsilon^\theta .$$
The proposition is proven.
\end{proof}

We combine Proposition~\ref{NDstab} with the regularization strategy of \cite{korpela_lassas_oksanen}, and restrict the results of this subsection to the subclass $\mathcal{C}'\subset\mathcal{C}$ defined in Section~\ref{intro}. We first explain why the restriction is needed.

That strategy is formulated on the half line $\Omega'=(0,+\infty)$, where the wave speed equals one outside a compact subinterval of the interior and the waves emitted at the boundary never return. The reconstruction runs over control parameters $r\in[0,T]$ and recovers the value of the wave speed at the point $\chi(r)$ for each such $r$. In the present model the Dirichlet condition at $x=1$ reflects the waves, and the Blagove\v{s}\v{c}enski\u{i} identity $(u^f(T),1)_{L^2_c}=(f,B1)_{L^2(0,2T)}$, on which the reconstruction of the travel-time volume rests, holds only for $f\in H(r)$ with $r\leqslant\tau(1)$, see Theorem~\ref{blagid}. The admissible range of the control parameter is thus limited by the travel time $\tau(1)$, which is itself unknown, as it depends on the wave speed one is trying to recover. The reconstruction of \cite{korpela_lassas_oksanen} is assembled from the measured Neumann-to-Dirichlet map alone, with the control parameter ranging over an interval fixed before the measurement, so the travel time $\tau(1)$ cannot enter it. That interval must therefore be chosen once and for all, so that its right endpoint (range) $r_{\max}$ satisfies $r_{\max}<\tau(1)$ and $\Omega(r_{\max})\supseteq\operatorname{supp}(c-1)$.

No such range exists for the class $\mathcal{C}$. For $c\in\mathcal{C}$ the set $\operatorname{supp}(c-1)$ is closed and may have $x=1$ as its right endpoint. For such a wave speed the requirement $\Omega(r_{\max})\supseteq\operatorname{supp}(c-1)$ forces $r_{\max}\geqslant\tau(1)$, which is incompatible with $r_{\max}<\tau(1)$. The two requirements are therefore not satisfiable on $\mathcal{C}$. This is what the subclass $\mathcal{C}'$ repairs. Its first condition places $\operatorname{supp}(c-1)$ inside $[0,1-d]$, and its second one calibrates $d$ so that a single $r_{\max}$ serves every admissible wave speed. Indeed, put $r_{\max}=\frac{1-d}{C_{min}}$ and let $c\in\mathcal{C}'$. Since $c\equiv 1$ on $[1-d,1]$ and $C_{min}\leqslant c\leqslant C_{max}$, we have
$$\tau(1-d)\leqslant r_{\max}<\tau(1).$$
The last inequality holds due to $d>\frac{C'}{C'+1}$. Hence $\operatorname{supp}(c-1)\subseteq\Omega(r_{\max})$. 

We therefore run the construction of \cite{korpela_lassas_oksanen} over $r\in[0,r_{\max}]$ instead of $r\in[0,T]$. Every control $f$ used in that construction is then supported in $[T-r_{\max},T]$, so by the first inequality above the corresponding waves do not reach the
endpoint $x=1$ before the observation time $T$. The identity $(u^f(T),1)_{L^2_c}=(f,B1)_{L^2(0,2T)}$ is available throughout and each quantity entering the construction coincides with its counterpart on the half line, even though $\Lambda$ itself differs from the half line map at times beyond $2\tau(1)$, which lie in the observation window since $T\geqslant\tau(1)$. Furthermore, $\operatorname{supp}(c-1)\subseteq \Omega(r_{\max})$, so the values $c(\chi(r))$, $r\in[0,r_{\max}]$, already determine $c$ on all of $\Omega$, and nothing is lost by stopping at $r_{\max}$.

The estimates of \cite{korpela_lassas_oksanen} are uniform in $r$ and are therefore unaffected by the restriction to $\mathcal{C}'$. Some of the auxiliary controls constructed there are supported in $[T-r',T]$ with $r'\leqslant r+C_{min}^{-1}\alpha^{1/2}$, where $\alpha$ is the regularization parameter. Since $r\leqslant r_{\max}$ and $\tau(1)-r_{\max}\geqslant\frac{1-d}{C_{max}}+d-\frac{1-d}{C_{min}}>0$, these controls also satisfy $r'\leqslant\tau(1)$ as soon as $\alpha$ is below a threshold determined by $C_{min}$, $C_{max}$ and $d$. 

Let $c\in\mathcal{C}'$ be the true wave speed and let $\Lambda\in\mathscr{B}(L^2(0,2T))$ be a measurement of $\Lambda^{c}$ with noise level $\delta>0$, that is, $\|\Lambda-\Lambda^{c}\|_{L^2(0,2T)\to L^2(0,2T)}\leqslant\delta$. Applying to $\Lambda$ the regularization strategy of \cite{korpela_lassas_oksanen} produces a wave speed $\widehat{c}\in C^2(\overline\Omega)$, which in general does not belong to $\mathcal{C}$, and Theorem~3 of \cite{korpela_lassas_oksanen} states that
\begin{equation}\label{eq:KLO}
\|\widehat{c}-c\|_{C^2(\overline\Omega)}\leqslant C\delta^{\kappa},
\end{equation}
where $\kappa=1/54$ and $C>0$ depends only on $C_{min}$, $C_{max}$ and $T$. Since $\widehat{c}$ is determined by $\Lambda$ and $\delta$ alone, \eqref{eq:KLO} holds for every $c\in\mathcal{C}'$ whose Neumann-to-Dirichlet map $\Lambda^c$ lies within $\delta$ of $\Lambda$. We now prove the inverse stability result, Theorem~\ref{5mainres}, which we restate.

\medskip
\noindent\textbf{Theorem~\ref{5mainres}.}
\textit{The inverse spectral map $\Phi^{-1}:\mathcal{D}'\to\mathcal{C}'$ is H\"older continuous from $\ell^\infty_2\times\ell^\infty_0$ to $C^2(\overline\Omega)$ with every exponent $\theta<\frac{\kappa}{2}$, that is, for any $c_1,c_2\in\mathcal{C}'$,
$$\|c_1-c_2\|_{C^2(\overline\Omega)}\lesssim_{\theta}\|\Phi(c_1)-\Phi(c_2)\|^{\theta}_{\ell^\infty_2\times\ell^\infty_0},\quad \theta\in \left(0,\frac\kappa2\right).$$
Moreover, if $\varepsilon=\|\Phi(c_1)-\Phi(c_2)\|_{\ell^\infty_2\times\ell^\infty_0}\leqslant\frac12$, then 
$$\|c_1-c_2\|_{C^2(\overline\Omega)}\lesssim\big(\varepsilon\log(1/\varepsilon)\big)^{\kappa/2}.$$}
\begin{proof}
Fix $c_1,c_2\in\mathcal{C}'$ and set $\delta=\|\Lambda^{c_1}-\Lambda^{c_2}\|_{L^2(0,2T)\to L^2(0,2T)}$. Then $\Lambda^{c_2}$ is a measurement of $\Lambda^{c_1}$ with noise level $\delta$; let $\widehat{c}$ be the wave speed reconstructed from $\Lambda^{c_2}$ at noise level $\delta$ by the strategy of \cite{korpela_lassas_oksanen}. Since $\Lambda^{c_2}$ lies within $\delta$ of both $\Lambda^{c_1}$ and $\Lambda^{c_2}$, estimate \eqref{eq:KLO} applies with $c=c_1$ and with $c=c_2$, giving $\|\widehat{c}-c_1\|_{C^2(\overline\Omega)}\leqslant C\delta^\kappa$ and $\|\widehat{c}-c_2\|_{C^2(\overline\Omega)}\leqslant C\delta^\kappa$, hence
$$\|c_1-c_2\|_{C^2(\overline\Omega)}\leqslant2C\delta^\kappa$$
by the triangle inequality. It remains to bound $\delta$ by $\varepsilon=\|\Phi(c_1)-\Phi(c_2)\|_{\ell^\infty_2\times\ell^\infty_0}$. For $\theta\in(0,\frac\kappa2)$, the first estimate of Proposition~\ref{NDstab} with the exponent $\theta/\kappa\in(0,\frac12)$ gives $\delta\lesssim_\theta\varepsilon^{\theta/\kappa}$, hence $\|c_1-c_2\|_{C^2(\overline\Omega)}\lesssim_{\theta}\varepsilon^{\theta}$. If $\varepsilon\leqslant\frac12$, the second estimate of Proposition~\ref{NDstab} gives $\delta\lesssim(\varepsilon\log(1/\varepsilon))^{1/2}$, hence $\|c_1-c_2\|_{C^2(\overline\Omega)}\lesssim (\varepsilon\log(1/\varepsilon))^{\kappa/2}$.
\end{proof}

Finally, we prove Corollary~\ref{6mainres}. Since $\|c\|_{C^3(\R)}\leqslant C_{max}$ for the wave speeds considered here, the second derivative of the difference $w=c_1-c_2$ is uniformly Lipschitz. Interpolating between the uniform bound on $w''$ and this Lipschitz bound controls the H\"older seminorm of $w''$ of any order $\eta<1$ by the power $1-\eta$ of $\|w\|_{C^2(\overline\Omega)}$, and thereby converts the $C^2(\overline\Omega)$ estimate of Theorem~\ref{5mainres} into a $C^{2,\eta}(\overline\Omega)$ estimate with the exponent multiplied by $1-\eta$, decreasing linearly to zero as $\eta\to1$. Nothing better is available: the bound $\|c\|_{C^3(\R)}\leqslant C_{max}$ does not force the third derivatives of two uniformly close wave speeds to be close, so no estimate in $C^{2,1}(\overline\Omega)$ or $C^3(\overline\Omega)$ can follow from Theorem~\ref{5mainres} alone.

\medskip
\noindent\textbf{Corollary~\ref{6mainres}.}
\textit{For any $c_1,c_2\in\mathcal{C}'$ and $\eta\in[0,1)$,}
$$\|c_1-c_2\|_{C^{2,\eta}(\overline\Omega)}\lesssim_{\theta}
\|\Phi(c_1)-\Phi(c_2)\|^{\theta(1-\eta)}_{\ell^\infty_2\times\ell^\infty_0} ,\quad \theta\in \left(0,\frac\kappa2\right).$$
\begin{proof}
Write $w=c_1-c_2$ and $A=\|w\|_{C^2(\overline\Omega)}$. Since $\|c_j\|_{C^3(\R)}\leqslant C_{max}$, $j=1,2$, we have $A\leqslant 2C_{max}$ and
$\|w'''\|_{L^\infty(\Omega)}\leqslant 2C_{max}$.

Let $\eta\in(0,1)$ and $x,y\in\overline\Omega$, $x\neq y$. Bounding the increment of $w''$
in two ways, by its supremum and by the mean value theorem, gives
$$|w''(x)-w''(y)|\leqslant\min\bigl\{2A,\ 2C_{max}|x-y|\bigr\}
\leqslant(2A)^{1-\eta}\bigl(2C_{max}|x-y|\bigr)^{\eta},$$
where the last step uses $\min\{a,b\}\leqslant a^{1-\eta}b^{\eta}$ for $a,b\geqslant0$.
Dividing by $|x-y|^\eta$ and taking the supremum,
$$\sup_{x\neq y}\frac{|w''(x)-w''(y)|}{|x-y|^{\eta}}\leqslant
2(2C_{max})^{\eta}A^{1-\eta}.$$
Moreover $A=A^{1-\eta}A^{\eta}\leqslant(2C_{max})^{\eta}A^{1-\eta}$, so
$$\|w\|_{C^{2,\eta}(\overline\Omega)}\leqslant 3(2C_{max})^{\eta}A^{1-\eta}\lesssim A^{1-\eta},$$
the constant being bounded by $3\max\{1,2C_{max}\}$, hence independent of $\eta$.

By Theorem~\ref{5mainres}, $A\lesssim_\theta\varepsilon^{\theta}$ for
$\theta\in(0,\frac\kappa2)$, where
$\varepsilon=\|\Phi(c_1)-\Phi(c_2)\|_{\ell^\infty_2\times\ell^\infty_0}$. Raising this to
the power $1-\eta\in(0,1)$ and combining with the previous display gives $\|w\|_{C^{2,\eta}(\overline\Omega)}\lesssim_\theta\varepsilon^{\theta(1-\eta)}$. The case $\eta=0$ is Theorem~\ref{5mainres} itself.
\end{proof}

\begin{Remark}
The exponent $\kappa=1/54$ is inherited from the regularization strategy of \cite{korpela_lassas_oksanen}. We do not address lower bounds for the modulus of continuity of $\Phi^{-1}$; in particular, the question whether $\Phi^{-1}$ is H\"older continuous with an exponent independent of the regularization strategy, or even Lipschitz on suitable subclasses, is left open.
\end{Remark}

\medskip

\section{Computational implementation}\label{compimpl}
\subsection{Forward problem: finite element method}
We now describe the finite element discretization used to compute the spectral data for the forward problem. The goal is to approximate the eigenvalues of the weak formulation of $(\mathcal{S}_\omega)$. The numerical solution of the forward spectral problem is carried out using the finite element method (FEM). The finite element method is a Galerkin approximation scheme in which the infinite-dimensional variational problem is replaced by a finite-dimensional one on a suitable subspace $V_h\subset V$. For the general theory of finite element approximations of eigenvalue problems, we refer to D. Boffi~\cite{boffi}.

Let $N\geqslant 2$ and set $h=1/N$. We consider the uniform partition
\[
0=x_0<x_1<\cdots<x_N=1,\qquad x_k=kh,
\]
and denote the elements by
\[
I_k=(x_k,x_{k+1}),\qquad k=0,\dots,N-1.
\]
We define
\[
V_h
=\left\{
v_h\in C(\overline{\Omega}) :
v_h(1)=0,
v_h|_{I_k}\in\mathcal{P}^1(I_k),\ k=0,\dots,N-1
\right\}.
\]
Here $\mathcal{P}^1(I_k)$ denotes the space of polynomials of degree at most one on $I_k$. Equivalently, one first considers the larger space
\[
\widehat V_h=
\left\{
\widehat v_h\in C(\overline{\Omega}):
\widehat v_h|_{I_k}\in\mathcal{P}^1(I_k),\ k=0,\dots,N
\right\},
\]
with standard nodal basis of
$\widehat V_h$ is given by the hat functions $\{\widehat\varphi_\ell\}_{\ell=0}^N$, defined by their nodal values
\[
\widehat\varphi_\ell(x_k)=
\begin{cases}
	1, & \ell=k,\\
	0, & \ell\neq k.
\end{cases}
\]
Each $\widehat\varphi_\ell$ is a continuous piecewise linear ``tent'' function. For an interior node
$x_\ell$, $1\leqslant \ell\leqslant N-1$, the function $\widehat\varphi_\ell$ is supported on $I_{\ell-1}\cup I_\ell=(x_{\ell-1},x_\ell)\cup(x_\ell,x_{\ell+1})$. Thus $\widehat\varphi_\ell$ is triangular: it increases linearly from $0$ to $1$ on
$I_{\ell-1}$, decreases linearly from $1$ to $0$ on $I_\ell$, and vanishes outside
$I_{\ell-1}\cup I_\ell$. At the boundary nodes, $\supp\widehat\varphi_0\subset I_0$ and $\supp\widehat\varphi_N\subset I_{N-1}$. Since the Dirichlet condition at $x_N=1$ requires $v_h(1)=0$, the last basis function
$\widehat\varphi_N$ is omitted from the approximation space $V_h$. Hence the active degrees of
freedom are associated with the nodes $x_0,\dots,x_{N-1}$, and
\[
V_h=\operatorname{span}\{\widehat\varphi_0,\dots,\widehat\varphi_{N-1}\}.
\]

The finite element approximation of $(\mathcal{S}_\omega)$, $\omega\in [0,1)$, is obtained by the Ritz--Galerkin method. Namely, we seek
$(\lambda_h,\phi_h)\in\mathbb{R}\times V_h$, with $\phi_h\neq 0$, satisfying
\[
B_\omega(\phi_h,v_h)=\lambda_h(\phi_h,v_h)_{L^2_c}
\qquad  v_h\in V_h.
\]
Every $\phi_h\in V_h$ can be written as
\[
\phi_h=\sum_{\ell=0}^{N-1} \alpha_\ell\widehat\varphi_\ell.
\]
Substituting this expansion into the Galerkin formulation gives
\[
\sum_{\ell=0}^{N-1} \alpha_\ell B_\omega(\widehat\varphi_\ell,v_h)
=
\lambda_h
\sum_{\ell=0}^{N-1}\alpha_\ell(\widehat\varphi_\ell,v_h)_{L^2_c},
\qquad v_h\in V_h.
\]
Taking $v_h=\widehat\varphi_j$, $j=0,\dots,N-1$, we obtain the finite system
\[
\sum_{\ell=0}^{N-1} \alpha_\ell B_\omega(\widehat\varphi_\ell,\widehat\varphi_j)
=
\lambda_h
\sum_{\ell=0}^{N-1} \alpha_\ell(\widehat\varphi_\ell,\widehat\varphi_j)_{L^2_c},
\qquad j=1,\dots,L.
\]
Thus, for the stiffness matrix $\mathsf{B}^\omega_{j\ell}=B_\omega(\widehat\varphi_\ell,\widehat\varphi_j)$ and the mass matrix $\mathsf{M}_{j\ell}=(\widehat\varphi_\ell,\widehat\varphi_j)_{L^2_c}$, the discrete eigenvalue problem takes the generalized matrix form
\[
\mathsf{B}^\omega\alpha=\lambda_h\mathsf{M}\alpha,
\]
where $\alpha=(\alpha_1,\dots,\alpha_L)^\top$. Solving this generalized eigenvalue problem gives approximations of the eigenvalues and eigenfunctions of $(\mathcal{S}_\omega)$. To assemble these matrices, we compute local element contributions. 

Fix an interval $I_k$ for some $k=0,\dots,N-1$. On $I_k$, the two local basis functions are the restrictions of the global hat functions associated with the endpoints $x_k$ and $x_{k+1}$. We denote them by
\[
\varphi^{(k)}_0(x)=\frac{x_{k+1}-x}{h},
\qquad
\varphi^{(k)}_1(x)=\frac{x-x_k}{h},
\qquad x\in I_k.
\]
Thus $\varphi^{(k)}_0$ is equal to $1$ at the left endpoint $x_k$ and $0$ at the right endpoint $x_{k+1}$, while $\varphi^{(k)}_1$ is equal to $0$ at $x_k$ and $1$ at $x_{k+1}$. These are the element shape functions. They are simply the restrictions of the corresponding global hat functions to the interval $I_k$. The local stiffness matrix on $I_k$ is
\[
\mathsf{B}^{(k)}_{ab}
=\int_{x_k}^{x_{k+1}}
(\varphi^{(k)}_a)'(x)(\varphi^{(k)}_b)'(x)\,dx
,\qquad a=0,1,\;b=0,1.
\]
Since
\[
(\varphi^{(k)}_0)'(x)=-\frac{1}{h},
\qquad
(\varphi^{(k)}_1)'(x)=\frac{1}{h},
\]
we obtain the standard formula
\[
\mathsf{B}^{(k)}
=
\frac{1}{h}
{\renewcommand{\arraystretch}{0.85}\setlength{\arraycolsep}{3pt}
\begin{pmatrix}
	\phantom{-}1 & -1\\
	-1 & \phantom{-}1
\end{pmatrix}}.
\]
The local stiffness matrices above first give the stiffness matrix $\mathsf{B}^0$ for the unperturbed case
$\omega=0$. It is assembled in the standard way:
for each element $I_k=(x_k,x_{k+1})$, the local nodes $0$ and $1$ correspond to the global
nodes $k$ and $k+1$. Hence the local entry $\mathsf{B}^{(k)}_{ab}$ is added to the global
entry $\mathsf{B}^0_{k+a,k+b}$, whenever the global indices $k+a$ and $k+b$ belong to $\{0,\dots,N-1\}$. The node $x_N=1$ is not active, since the Dirichlet condition
$\phi(1)=0$ is imposed. Therefore the assembled matrix $\mathsf{B}^0$ has size
$N\times N$.

For $\omega>0$, the bilinear form is $B_\omega(u,v)
=\int_0^1 u'(x)v'(x)\,dx-\omega u(0)v(0)$. The first term is exactly the one used to assemble $\mathsf{B}^0$. The only additional
contribution is the Robin boundary term $-\omega u(0)v(0)$. Since this term is supported
only at the endpoint $x=0$, it affects only the basis function associated with the node
$x_0$, since $\widehat\varphi_0(0)=1$ and $\widehat\varphi_j(0)=0$ for $j\neq 0$. Thus the perturbed stiffness matrix $\mathsf{B}^\omega$ differs from the unperturbed one
$\mathsf{B}^0$ only in its first diagonal entry:
\[
\mathsf{B}^{\omega}_{00}=\mathsf{B}^{0}_{00}-\omega,
\qquad
\mathsf{B}^{\omega}_{j\ell}=\mathsf{B}^{0}_{j\ell},
\quad (j,\ell)\neq(0,0).
\]
Equivalently, all dependence on the Robin impedance parameter $\omega$ is contained in
the single entry $\mathsf{B}^{\omega}_{00}$.

The local mass matrix on $I_k$ is
\[
\mathsf{M}^{(k)}_{ab}
=\int_{x_k}^{x_{k+1}}
\varphi^{(k)}_a(x)\varphi^{(k)}_b(x)\,\frac{dx}{c^2(x)}
,\qquad a=0,1,\;b=0,1.
\]
This is the only place where the wave speed \(c\) enters the computation of the spectral data. To compute the latter integral, we use the two-point Gauss--Legendre quadrature rule with
\[
\xi_1=-\frac{1}{\sqrt{3}},
\qquad
\xi_2=\frac{1}{\sqrt{3}},
\qquad
w_1=w_2=1,
\]
and let
\[
x(\xi)=\frac{x_k+x_{k+1}}{2}+\frac{h}{2}\xi,\qquad -1\leqslant \xi\leqslant 1.
\]
We then approximate the entries of the local mass matrix on $I_k$ by
\[
\mathsf{M}^{(k)}_{ab}
\approx
\frac{h}{2}
\sum_{q=1}^{2}
\varphi_a^{(k)}(x(\xi_q))\varphi_b^{(k)}(x(\xi_q))\,
\frac{w_q}
{c^2(x(\xi_q))},
\qquad a=0,1,\;b=0,1.
\]
This gives the local $2\times 2$ contribution to the global mass matrix.

The global mass matrix $\mathsf{M}\in\mathbb{R}^{N\times N}$ is assembled in the same way as the stiffness matrix. For each element $I_k=(x_k,x_{k+1})$, the local nodes $0$ and $1$ correspond to the global nodes $k$ and $k+1$. Hence the local entry $\mathsf{M}^{(k)}_{ab}$ is added to the global entry $\mathsf{M}_{k+a,k+b}$, whenever the global indices $k+a$ and $k+b$ are active degrees of freedom, that is, belong to $\{0,\dots,N-1\}$. Since the Dirichlet condition at $x_N=1$ is imposed, the last node is omitted from the set of unknowns. Therefore, on the last element $I_{N-1}=(x_{N-1},x_N)$, only the contributions involving the active node $x_{N-1}$ are retained. After all element contributions have been added, the assembled matrix $\mathsf{M}$ represents the weighted inner product $(\cdot,\cdot)_{L^2_c}$ on the finite element space $V_h$.

\medskip

After assembly, we obtain matrices $\mathsf{B}^\omega,\mathsf{M}\in\mathbb{R}^{N\times N}$ and solve the discrete spectral problem
\[
\mathsf{B}^\omega\alpha=\lambda_h\mathsf{M}\alpha
\]
for $\omega=0$ and some sufficiently small $\omega\in (0,1)$. Since $\mathsf{M}$ is symmetric positive-definite and $\mathsf{B}^\omega$ is symmetric, the discrete eigenvalues are real, and the eigenvectors may be chosen $\mathsf{M}$-orthogonal. We now have, for a prescribed wave speed $c(x)$ and a fixed $\omega$, two finite sets of eigenvalues:
\[
\{\lambda_n(0)\}_{n=1}^L
\qquad\text{and}\qquad
\{\lambda_n(\omega)\}_{n=1}^L.
\]
This is the simulated spectral data.

\medskip

\subsection{Inverse problem: reconstruction of the wave speed via the boundary control method}\label{iprotwsvtbcm}
We now describe the numerical reconstruction of the wave speed from the spectral data by the boundary control method, following Theorem~\ref{3mainres}. The first step is to fix the observation time \(T\). It depends on the travel time \(\tau(1)=\int_0^1 c(x)^{-1}\,dx\), which is unknown and has to be estimated from the data.

By the Liouville transform of Section~\ref{stabsec}, $(\mathcal{S}_0)$ is equivalent to $-\psi''+q\psi=\lambda\psi$ on $(0,\tau(1))$, with a bounded potential $q$, $\psi'(0)=0$ and $\psi(\tau(1))=0$. For $q=0$ the eigenvalues are $\bigl(\pi(n-\tfrac12)/\tau(1)\bigr)^2$, and a bounded $q$ shifts them by its mean value, which gives the classical asymptotics
\[
\sqrt{\lambda_n(0)}=\frac{\pi\left(n-\tfrac12\right)}{\tau(1)}+\frac{b}{n}+o\!\left(n^{-1}\right),\qquad b=\frac{1}{2\pi}\int_0^{\tau(1)}q\,d\tau .
\]
We fit the model $\sqrt{\lambda_n(0)}\approx\pi(n-\tfrac12)/\tau(1)+a+b/n$ of the three parameters \(\tau(1)\), \(a\), \(b\) by least squares to the computed eigenvalues \(\lambda_n(0)\), $5\leqslant n\leqslant 40$. The additional constant $a$ absorbs the discretization error of the finite element eigenvalues; for the exact spectrum $a=0$. This gives an estimate \(\hat\tau(1)\) that uses the spectral data only.

The theory takes \(T\geqslant\tau(1)\) in order to recover \(c\) on all of \(\Omega\). In the computations we take instead
\[
T=(1-\eta)\,\hat\tau(1),\qquad \eta=0.03,
\]
slightly below \(\tau(1)\). This is admissible by Theorem~\ref{blagid}, since \(r\leqslant T<\tau(1)\). We then recover \(c\) on \(\Omega(T)=[0,\chi(T)]\) and continue it on the short remaining interval \((\chi(T),1]\) by its last computed value \(\hat c(\chi(T))\).

The margin \(\eta\) keeps the front reflected at \(x=1\), which returns to \(x=0\) at \(t=2\tau(1)\), outside the observation window \([0,2T]\). It also keeps out the Gibbs oscillations that the truncated series \(\Lambda_L^\omega\) produces around this front. These oscillations have period of order \(\tau(1)/L\), set in slightly before \(t=2\tau(1)\), and would otherwise corrupt \(V(r)\) for \(r\) close to \(\tau(1)\). Hence \(\eta\) must exceed both the relative error of \(\hat\tau(1)\), below \(1\%\) in our experiments, and the width of these oscillations. The value \(\eta=0.03\) minimizes the largest relative \(L^2\) error over the smooth test profiles among \(\eta\in[0.005,0.065]\). Smaller values make the errors grow rapidly, while larger values lengthen the interval \((\chi(T),1]\) on which \(c\) is only continued constantly.

With \(T\) fixed, we pass from the spectral data to the Neumann-to-Dirichlet map on \(L^2(0,2T)\). The link is given by Proposition~\ref{1mainres}, whose formula for \(\Lambda\) uses the derivatives \(\lambda_n'(0)\). In the numerical implementation these derivatives are replaced by the finite-difference approximations
\[
\lambda_n'(0)\approx\frac{\lambda_n(\omega)-\lambda_n(0)}{\omega},\qquad n=1,\dots,L,
\]
whose error is \(O(\omega\sup_{[0,\omega]}|\lambda_n''|)\) by Taylor's theorem, since the second derivative exists by Proposition~\ref{lem2}. This modelling error is introduced by the numerical scheme alone and is not covered by the stability estimates of Section~\ref{stabsec}, which take \(\{\lambda_n(0),\lambda_n'(0)\}\) as exact input. The computed spectral data thus approximate the coefficients of the convolution representation of \(\Lambda\), and in the computations we use the discrete truncated Neumann-to-Dirichlet map
\[
\Lambda_L^\omega[f]=\sum_{n=1}^{L}\frac{\lambda_n(\omega)-\lambda_n(0)}{\omega}\,S_{\lambda_n(0)}[f],\qquad f\in L^2(0,2T).
\]
This operator is the numerical approximation of \(\Lambda\) used in the reconstruction.

The next step is to pass from the boundary operator \(\Lambda\) to the Blagove\v{s}\v{c}enski\u{\i} operator \(K\). In the numerical implementation, the exact map $\Lambda$ is replaced by its finite-rank approximation $\Lambda_L^\omega$, obtained from the two finite element spectra. Thus we form a discrete approximation of
\[
K=R\Lambda RJ-J\Lambda,
\]
where $J$ and $R$ are defined in Section \ref{premip1}.

We discretize the time interval $[0, 2T]$ by a uniform grid with time step $\Delta t$ and use piecewise linear hat functions as a basis for the boundary controls. With respect to this basis, the operators $J$, $R$, $\Lambda_L^\omega$, and hence $K$, are represented by matrices. Fix sufficiently small regularization parameters $\alpha,\beta>0$. By Proposition~\ref{2mainres}, the regularized control
\[
f_{\alpha,\beta,r} = (\mathcal{P}_rK\mathcal{P}_r + \alpha I + \beta S)^{-1}\mathcal{P}_rB1
\]
approximates the indicator $\mathds{1}_{\Omega(r)}$ uniformly in $r\in[0,\tau(1)]$, so that the same pair $(\alpha,\beta)$ is used for all values of $r$, as discussed in Section~\ref{potpamr}. The resulting control $f_{\alpha,\beta,r}$ is used to compute the scalar quantity
\[
V(r) \approx (f_{\alpha,\beta,r}, B1)_{L^2(0,2T)}.
\]
Repeating this procedure for a collection of values $\{r_j\}_{j=0}^m \subset [0,\tau(1)]$ produces a discrete profile $\{(r_j, V(r_j))\}_{j=0}^m$. 

The final step is to recover the wave speed from this volume profile. This step uses the identities
\[
V(r)=\int\limits_{\Omega(r)}\frac{dx}{c^2(x)},
\qquad
\Omega(r)=\{x\in\Omega:d(0,x)\leqslant r\},
\]
where $d(0,x)=\tau(x)$ is the travel-time distance. In the view of the proof of Theorem~\ref{3mainres}, we get
\[
c(\chi(r))=\frac{1}{V'(r)},\quad
\chi(r)=\int_0^r \frac{ds}{V'(s)}.
\]
Therefore the numerical reconstruction of $c(x)$ requires a stable approximation of the derivative $V'(r)$.

In the implementation, the differentiation of the discrete volume function is treated as a regularized inverse problem. Let
\[
Y\,:\,L^2(0,\tau(1))\to L^2(0,\tau(1)),
\qquad
(Yg)(r)=\int_0^r g(t)\,dt,
\]
be the Volterra integration operator. By the fundamental theorem of calculus,
\[
Y\left[V'\right]=V,\qquad V(0)=0.
\]
Hence recovering \(V'\) from noisy discrete values of \(V\) can be viewed as inverting the integration operator \(Y\), which is an ill-conditioned operation.

In the discrete setting, we replace \(Y\) by a cumulative integration matrix, constructed using the trapezoidal rule on the time mesh. Thus, for a vector \(g=(g_j)\) approximating \(V'\), the vector \(Yg\) approximates the cumulative integral of \(g\). In the simplest left-sum form this has the structure
\[
(Yg)_i \approx \sum_{j=0}^{i-1} g_j\,\Delta t,
\qquad
(Yg)_0=0,
\]
while in the actual implementation the endpoint weights are chosen according to the trapezoidal rule. If the computed volume data contain numerical errors, say \(V+\varepsilon\), then numerical differentiation is formulated as the inverse problem of finding \(g\) such that $Yg\approx V$.

To stabilize this inversion, three options are available. The first is to fit a smoothing spline to the data \(V\) and then differentiate the spline analytically. The second is an \(H^1\)-Tikhonov regularization. The third option is total variation (TV) regularization, which is useful when \(V'\), and therefore the reconstructed wave speed, may have sharp transitions. In this case we solve
\[
\min_g \left\{ \|Yg-V\|_2^2+\alpha_{\mathrm{TV}}\|D_t g\|_1 \right\},
\]
where \(D_t\) is the forward finite-difference operator on the time mesh and
\(\|D_t g\|_1\) is the discrete total variation of \(g\). The parameter $\alpha_{\mathrm{TV}}$ stabilizes the differentiation of $V$ and is unrelated to the parameters $\alpha,\beta$ of the control problem. Thus the first term enforces consistency with the computed volume data, while the TV penalty suppresses oscillations in the numerical derivative without overly smoothing jumps. In the numerical experiments of Section~\ref{numres}, the spline and $H^1$-Tikhonov options are used for smooth wave speed profiles, while the TV option is reserved for piecewise constant profiles (these profiles lie outside the admissible class $\mathcal{C}$ and are included to
probe the behaviour of the method). In the latter case, $V'$ is itself discontinuous and spline or $H^1$ regularization would artificially smear the~jumps.

After computing a stable approximation of $V'$, we evaluate
\[
\chi(r)=\int_0^r\frac{ds}{V'(s)}
\]
by numerical quadrature. The map $\chi$ is inverted by interpolation to obtain $r=\tau(x)$, and the wave speed is finally reconstructed on a spatial grid by
\[
c(x)=\frac{1}{V'(\tau(x))}.
\]

The scheme involves several parameters: the truncation $L$ of the spectra, the impedance step $\omega$ used to approximate $\lambda_n'(0)$, the spatial mesh size $h=1/N$, the time step $\Delta t$, the margin $\eta$ in $T=(1-\eta)\hat\tau(1)$, and the regularization parameters $\alpha,\beta$ of the control problem. We do not prove convergence of the fully discrete scheme, which would require coupling these parameters to one another. Such an analysis, coupling the discretization and the regularization parameter to the noise level, is carried out in \cite{KLO1} for a related problem on the half line; the truncation of the spectra and the impedance step, which are specific to the present scheme, are not covered there. In the following section the parameters are chosen empirically.

\medskip 

\section{Numerical results}\label{numres}
In this section we examine the performance of the reconstruction algorithm described in Section~\ref{compimpl} on simulated data. The forward problem is solved numerically by the finite element method, and the resulting synthetic spectral data $\{\lambda_n(0)\}_{n=1}^{L}$ and $\{\lambda_n(\omega)\}_{n=1}^{L}$ serve as input to the inverse solver. The experiments of Sections~\ref{sec:sources} and~\ref{sec:profiles} are carried out on noise-free data, so that the results reflect the intrinsic accuracy of the method. Its sensitivity to measurement noise is studied separately in Section~\ref{sec:noise}. There, independent random perturbations are added to both spectra, and the derivatives $\lambda_n'(0)$ are computed from the perturbed eigenvalues exactly as in the noise-free case.

\medskip

\subsection{Reconstruction of Neumann boundary sources}\label{sec:sources}
The reconstruction of the wave speed rests on Proposition~\ref{2mainres}, namely the source $f_{\alpha,\beta,r}$, computed from the spectral data alone, generates a wave whose snapshot $u^{f_{\alpha,\beta,r}}(T)$ approximates the indicator $\mathds{1}_{\Omega(r)}$ of the domain of influence. We verify this property numerically. The aim is not to verify the proposition itself, which is proved above. Rather, we check whether the source actually computed from the discrete and truncated spectral data produces a wave $u^{f_{\alpha,\beta,r}}(T)$ that is close to $\mathds{1}_{\Omega(r)}$. 

We carry out this test in the simple case \(c(x)\equiv 1\) on \(\Omega\). In this case \(\tau(x)=x\), so \(\Omega(r)=[0,r]\), and \(u=u^f\) solves
\[
\begin{cases}
\partial_t^2 u-\partial_x^2 u=0, & (t,x)\in(0,T)\times\Omega,\\
\partial_x u(t,0)=f(t),\quad u(t,1)=0, & t\in(0,T),\\
u(0,x)=\partial_t u(0,x)=0, & x\in\Omega .
\end{cases}
\]
Let \(G:\mathbb{R}\to\mathbb{R}\) be defined by $G(s)=-\int_0^{s} f(s')\,ds'$ for $s>0$, $G(s)=0$ for $s\leqslant 0$, and set \(u(t,x)=G(t-x)\), a wave travelling to the right from \(x=0\) with speed one. This function satisfies the wave equation. It vanishes together with \(\partial_t u\) at \(t=0\), since \(t-x=-x\leqslant 0\) there. It satisfies \(u(t,1)=G(t-1)=0\) for \(t\leqslant T\leqslant 1\), because the wave has not yet reached \(x=1\). Finally, \(\partial_x u(t,0)=-G'(t)=f(t)\). By uniqueness, \(u=u^f\), and therefore
\[
u^f(T,x)=G(T-x)=-\int_0^{T-x} f(t)\,dt, \qquad x\in\Omega .
\]
Choosing \(f=-\delta_{T-r}\), where \(\delta_{T-r}\) denotes the Dirac delta at \(t=T-r\), we obtain
\[
u^f(T,x)=\int_0^{T-x}\delta_{T-r}(t)\,dt=\mathds{1}_{\Omega(r)}.
\]

Hence \(-\delta_{T-r}\) is the exact source, and \(f_{\alpha,\beta,r}\) is expected to approximate it. Note that \(f_{\alpha,\beta,r}\) is computed from the spectral data alone, which are simulated for \(c\equiv 1\). Deviations of \(u^{f_{\alpha,\beta,r}}(T)\) from \(\mathds{1}_{\Omega(r)}\) could, in principle, come either from the source or from the algorithm used to compute the wave. We therefore use the finite difference method (FDM) with equal space and time steps, which in the present case introduces no numerical oscillations. The FDM does not use the spectral data, so it provides an independent check, and we compare the resulting snapshot $u^{f_{\alpha,\beta,r}}(T)$ with theoretical $\mathds{1}_{\Omega(r)}$. Any oscillations in the figures below thus come from \(f_{\alpha,\beta,r}\) itself. Fixing the travel-time parameter \(r=0.7T\), we compute the source \(f_{\alpha,\beta,r}\) for several choices of \((\alpha,\beta)\). Each figure below has two panels: the source \(f_{\alpha,\beta,r}\), compared with \(-\delta_{T-r}\), and the wave snapshot \(u^{f_{\alpha,\beta,r}}(T)\), compared with the indicator \(\mathds{1}_{\Omega(r)}\). Throughout these plots, dotted orange lines denote exact quantities, while solid blue lines denote the computed ones.

We first set \(\beta=0\), so that only the \(L^2\)-penalty controlled by \(\alpha\) is used. For \(\alpha=\texttt{1e-3}\), the recovery is poor for both the source and the indicator. Reducing \(\alpha\) improves the reconstruction (Figure~\ref{fig:figure10}, \(\alpha=\texttt{2e-4}\)), but for \(\alpha=\texttt{1e-5}\) Gibbs-type oscillations appear near the discontinuity of \(f_{\alpha,\beta,r}\), as shown in Figure~\ref{fig:figure11}. Thus the oscillation threshold lies between \(\texttt{1e-5}\) and \(\texttt{2e-4}\). A manual search gives \(\alpha=\texttt{3e-5}\) as the best oscillation-free choice for pure \(L^2\)-regularization; see Figure~\ref{fig:figure12}.

\begin{figure}[H]
	\centering	
	\includegraphics[scale=1]{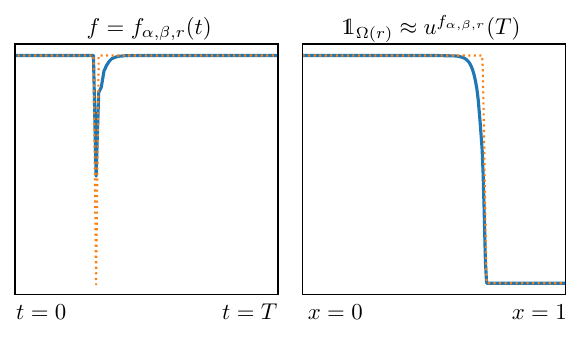}
	\caption{$\alpha=\texttt{2e-4}$, $\beta=0$. Reconstruction remains smoother than the true one.}
	\label{fig:figure10}
\end{figure}

\begin{figure}[H]
	\centering	
	\includegraphics[scale=1]{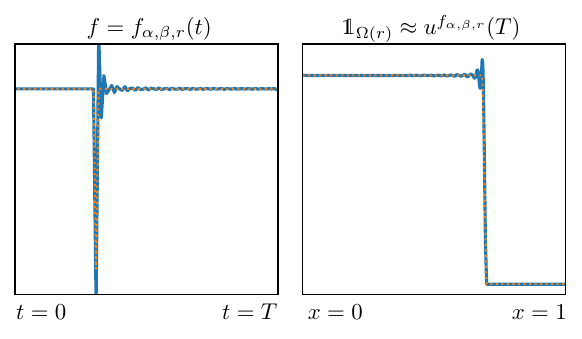}
	\caption{$\alpha=\texttt{1e-5}$, $\beta=0$. Reconstruction is under-regularized and the Gibbs phenomenon appears near the discontinuity of $f_{\alpha,\beta, r}$.}
	\label{fig:figure11}
\end{figure}

\begin{figure}[H]
	\centering	
	\includegraphics[scale=1]{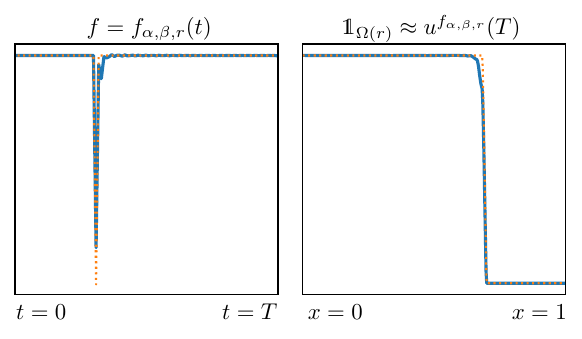}
	\caption{Fine tuned $\alpha=\texttt{6e-5}$, $\beta=0$. The parameter
		$\alpha=\texttt{6e-5}$ is the smallest value in the oscillation-free regime, and therefore yields the best reconstruction achievable with $L^2$ regularization alone.}
	\label{fig:figure12}
\end{figure}

We next investigate whether the $H^1$ regularization term, controlled by $\beta$, can suppress the Gibbs oscillations more effectively than a coarser $\alpha$ alone. We fix $\alpha = \texttt{1e-5}$, at which point the reconstructed source already oscillates with $\beta = 0$, and progressively reduce $\beta$ from $\texttt{1e-9}$ downward. As shown in Figure \ref{fig:figure14} and Figure \ref{fig:figure15}, increasing $\beta$ damps the oscillations at the cost of smoothing the source, while decreasing $\beta$ recovers the sharpness of $f_{\alpha,\beta, r}$ at the expense of reintroducing the Gibbs phenomenon. The threshold is reached at $\beta = \texttt{1e-11}$ (Figure~\ref{fig:figure15}), below which oscillations reappear.

\begin{figure}[H]
	\centering	
	\includegraphics[scale=1]{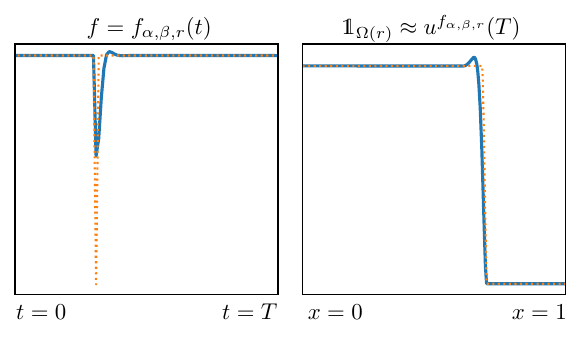}
	\caption{$\alpha=\texttt{1e-5}$, $\beta=\texttt{1e-9}$.
		Reducing $\beta$ recovers sharpness while oscillations remain suppressed.}
	\label{fig:figure14}
\end{figure}

\begin{figure}[H]
	\centering	
	\includegraphics[scale=1]{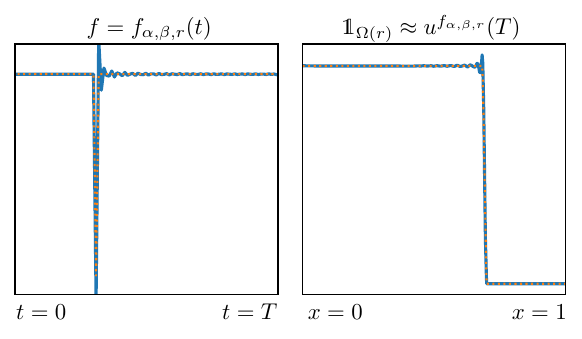}
	\caption{$\alpha=\texttt{1e-5}$, $\beta=\texttt{1e-11}$.
		The Gibbs phenomenon reappears, marking the lower bound of the admissible $\beta$-regime.}
	\label{fig:figure15}
\end{figure}

A joint search over both parameters in the neighborhood of these threshold values yields the pair $\alpha = \texttt{3e-5}$, $\beta = \texttt{3e-10}$ as the optimal combination, shown in Figure~\ref{fig:figure16}. The resulting reconstruction is oscillation-free and resolves the discontinuity of $f_{\alpha,\beta, r}$ more sharply than the $L^2$-only optimum of Figure~\ref{fig:figure12}; accordingly, the recovered indicator $\mathds{1}_{\Omega(r)}$ exhibits a steeper transition at the wavefront and a closer agreement with the true characteristic function. This confirms that the combined $L^2$--$H^1$ regularization norm $\|\cdot\|_{\alpha,\beta}$ outperforms pure $L^2$ penalization when the target control contains near-discontinuous features.

\begin{figure}[H]
	\centering	
	\includegraphics[scale=1]{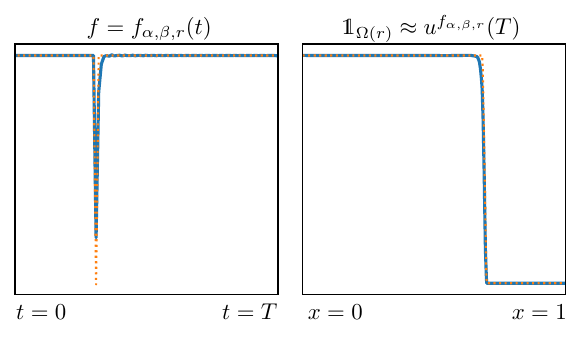}
	\caption{Fine tuned $\alpha=\texttt{3e-5}$, $\beta=\texttt{3e-10}$.
		The joint $L^2$--$H^1$ regularization yields a sharper and more accurate
		reconstruction than $L^2$ penalization alone.}
	\label{fig:figure16}
\end{figure}

\medskip

\subsection{Wave speed profiles}\label{sec:profiles}
Figure~\ref{fig:paperfigure} compares the reconstructed profiles (black solid) with the exact ones (orange dashed): a constant profile (a), smooth profiles in \(\mathcal{C}\) (b), (c), a smooth profile not in \(\mathcal{C}\) (d), and piecewise constant profiles (e), (f). For each profile, \(\alpha\), \(\beta\), the method for differentiating \(V\) and its regularization parameter are chosen on a grid by minimizing the relative \(L^2\) error, and (d) includes the linear extrapolation at the right end. Table~\ref{tab:tab4} reports the relative errors on \(\overline\Omega\) in the \(TV\), \(L^\infty\), \(L^2\) and \(L^1\) norms.

\begin{figure}[H]
	\centering
	\includegraphics[scale=0.8]{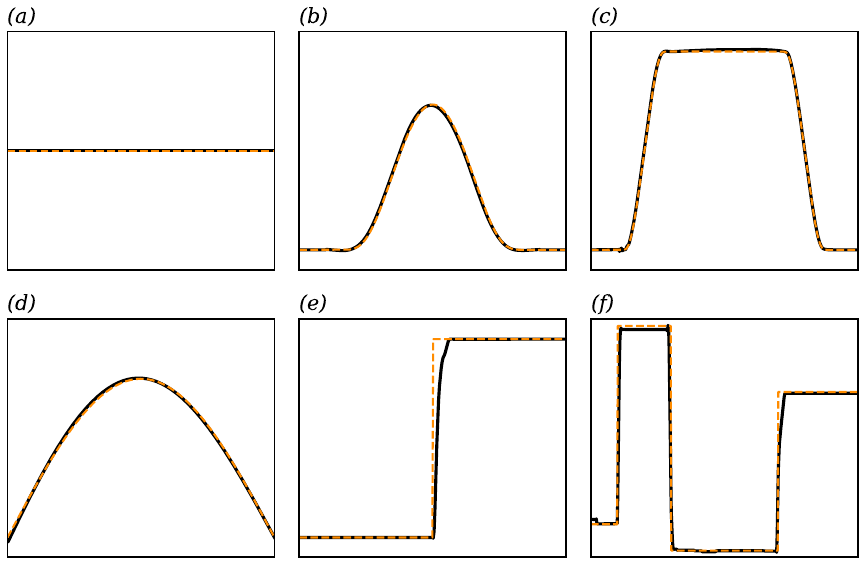}
	\caption{Exact wave speed profiles (orange dashed) vs.\ reconstructed profiles (black solid). (a)--(c)~smooth profiles in $\mathcal{C}$; (d)~smooth profile not in $\mathcal{C}$; (e)~single-jump piecewise constant; (f)~multiple-jumps piecewise constant.}
	\label{fig:paperfigure}
\end{figure}
\begin{table}[H]
	\centering
	\small
	\renewcommand{\arraystretch}{1.2}
	\begin{tabular}{lcccccc}
		\toprule
		Metric & (a) & (b) & (c) & (d) & (e) & (f) \\
		\midrule
		$TV$ & N/A & $3.27\%$ & $5.59\%$ & $0.647\%$ & $102\%$ & $175\%$\\
		$L^\infty$ & $0.01\%$ & $1.11\%$ & $2.6\%$ & $0.17\%$ & $16\%$ & $38.4\%$\\
		$L^2$ & $0.004\%$ & $0.2\%$ & $0.323\%$ & $0.095\%$ & $1.29\%$ & $4.65\%$\\
		$L^1$ & $0.003\%$ & $0.072\%$ & $0.15\%$ & $0.087\%$ & $0.344\%$ & $1\%$ \\
		\bottomrule
	\end{tabular}
	\caption{Relative reconstruction errors for the six profiles shown in Figure~\ref{fig:paperfigure}.}
	\label{tab:tab4}
\end{table}

Plots~(a)--(d) correspond to smooth wave speed profiles. The constant profile \(c(x)=1\), shown in~(a), serves as a baseline test: in this case the travel-time volume is linear, \(V(r)=r\). The reconstruction is essentially exact, with relative \(L^2\) and \(L^1\) errors below \(0.004\%\).

For the smooth profiles~(b) and~(c) in \(\mathcal{C}\), the main source of error is the numerical recovery of \(V'\) from the computed volume function. The relative \(L^2\) and \(L^1\) errors remain below \(0.33\%\). The errors are concentrated near the endpoints of \(\Omega\), where the regularized differentiation of \(V\) has end effects. The \(TV\) and \(L^\infty\) errors are more sensitive to these localized deviations .

Profile~(d) is smooth but does not belong to \(\mathcal{C}\), since \(c'(0)\neq0\) and \(c'(1)\neq0\). Its reconstruction is accurate in the interior, and the violation of the class assumptions affects only the right end. There, the values of \(\hat c\) just before \(\hat\chi(T)\) are affected by the end effect of the differentiation of \(V\), and on \((\hat\chi(T),1]\) the reconstruction is only continued constantly. We therefore apply a linear extrapolation at the right end: \(\hat c\) is kept on \([0,\hat\chi(T)-\delta]\), and on \((\hat\chi(T)-\delta,1]\) it is replaced by the straight line fitted by least squares to its last \(20\) values before \(\hat\chi(T)-\delta\). The values \(\delta=0.02\) and \(20\) were chosen empirically. This simple correction is particularly effective here.

Plots~(e)--(f) show piecewise constant profiles with jump discontinuities. In both cases the constant levels are recovered accurately, and the errors are concentrated at the jumps. A small error in a jump location produces a pointwise discrepancy of the size of the jump on a short interval. This explains the high \(TV\) and \(L^\infty\) errors. The \(L^2\) and \(L^1\) errors, which weight the size of the interval where the discrepancy occurs, remain moderate. Thus, for discontinuous profiles, the \(L^2\) and \(L^1\) errors provide a more representative measure of the overall reconstruction quality.

The deterioration of \(TV\) and \(L^\infty\) accuracy in the multiple-jump case is consistent with the difficulty of resolving several closely spaced discontinuities. We do not have a definitive explanation for all local features observed in this example. A possible reason is the interaction between regularization, numerical differentiation of \(V\), and small errors in the estimated jump positions. A more detailed study of this effect, including a comparison with \(TV\)-based regularization methods such as the ADMM solver, is left for future work.

\medskip

\subsection{Sensitivity to noise in the spectral data}\label{sec:noise}
The measured data are the two spectra. We perturb each of them by independent multiplicative Gaussian noise,
\[
\tilde\lambda_n(0)=\lambda_n(0)\,(1+\sigma\xi_n),\qquad
\tilde\lambda_n(\omega)=\lambda_n(\omega)\,(1+\sigma\xi_n'),\qquad n=1,\dots,L,
\]
where \(\xi_n,\xi_n'\sim\mathcal{N}(0,1)\) are independent and \(\sigma>0\) is the relative noise level. The derivatives are computed from the perturbed eigenvalues as in the noise-free case, \(\tilde\lambda_n'(0)=(\tilde\lambda_n(\omega)-\tilde\lambda_n(0))/\omega\). The perturbed data enter every step of the reconstruction, including the estimate \(\hat\tau(1)\) and hence \(T\). All other parameters are those of the noise-free experiments, and each noise level is tested with three realizations.

We first estimate the error \(e_n=\tilde\lambda_n'(0)-\lambda_n'(0)\) of the derivative data. Fix \(\omega_0\in (0,1)\) and let \(M_n=\sup_{[0,\omega_0]}|\lambda_n''|\), which is finite by Proposition~\ref{lem2}. For \(\omega\in (0,\omega_0]\), Taylor's theorem gives \(\lambda_n(\omega)=\lambda_n(0)+\omega\lambda_n'(0)+\frac12\omega^2\lambda_n''(\theta_n)\) for some \(\theta_n\in(0,\omega)\), and hence
\[
e_n=\underbrace{\frac{\omega}{2}\,\lambda_n''(\theta_n)}_{\text{modelling error}}+\underbrace{\frac{\sigma}{\omega}\bigl(\lambda_n(\omega)\xi_n'-\lambda_n(0)\xi_n\bigr)}_{\text{amplified noise}} .
\]
Since \(\xi_n,\xi_n'\) are independent with zero mean and unit variance,
\[
\mathbb{E}\,e_n=\frac{\omega}{2}\,\lambda_n''(\theta_n),\quad
\operatorname{Var}e_n=\frac{\sigma^2}{\omega^2}\bigl(\lambda_n(\omega)^2+\lambda_n(0)^2\bigr)=\frac{2\sigma^2\lambda_n(0)^2}{\omega^2}\bigl(1+O(\omega)\bigr).
\]
Without noise, \(e_n=\mathbb{E}\,e_n\), only the modelling error remains, and the error of the derivative data is of order \(\omega\), since \(|e_n|\leqslant\tfrac12\omega M_n\).

The noise term determines when the data become unusable. Relative to the exact value, its standard deviation is
\[
s_n=\frac{\sqrt{\operatorname{Var}e_n}}{|\lambda_n'(0)|}\approx\frac{\sqrt2\,\sigma\lambda_n(0)}{\omega\,|\lambda_n'(0)|}.
\]
Since \(\lambda_n(0)\asymp n^2\) increases with \(n\) while \(|\lambda_n'(0)|=|\phi_n(0)|^2\asymp1\), the largest relative error occurs at \(n=L\), and
\[
\rho=\max_{n\leqslant L}s_n\approx\frac{\sigma\lambda_L(0)}{\omega}
\]
up to a factor of order one. For \(\rho\ll1\), every coefficient \(\tilde\lambda_n'(0)\) of \(\Lambda_L^\omega\) is accurate. For \(\rho\gg1\), the coefficients with \(n\) close to \(L\) are dominated by noise, and even their sign is random. We therefore expect the reconstruction to break down at \(\rho\approx1\), that is, at the noise level
\[
\sigma^*\approx\frac{\omega}{\lambda_L(0)} .
\]
This threshold grows linearly with \(\omega\). Since \(\lambda_L(0)\asymp L^2\), it is also expected to decrease like \(1/L^{2}\), and thereby using more eigenvalues improves the resolution of \(\Lambda_L^\omega\), but lowers the admissible noise level.

Table~\ref{tab:noise_rec} tests this prediction for profiles~(c), (d) and~(e) at two noise levels for each \(\omega\), one below the expected threshold and one above it. Below the threshold, the reconstruction is as accurate as without noise; above it, the reconstruction deteriorates strongly or breaks down, for every \(\omega\). A breakdown means that, for at least one realization, the relative error exceeds \(100\%\) or the computed \(V'\) is not positive, so that \(\chi\) cannot be inverted.

\begin{table}[H]
	\centering
	\small
	\renewcommand{\arraystretch}{1.2}
	\begin{tabular}{ccccccc}
		\toprule
		$\omega$ & $\sigma$ & $\rho$ & data error & (c) & (d) & (e)\\
		\midrule
		$10^{-4}$ & $10^{-10}$ & $0.47$ & $0.34$ & $0.388\%$ & $0.461\%$ & $1.51\%$\\
		          & $10^{-9}$  & $4.7$  & $3.4$  & -- & -- & $15.1\%$\\
		\midrule
		$10^{-3}$ & $10^{-9}$  & $0.47$ & $0.34$ & $0.384\%$ & $0.381\%$ & $1.47\%$\\
		          & $10^{-8}$  & $4.7$  & $3.4$  & -- & -- & $19.4\%$\\
		\midrule
		$10^{-2}$ & $10^{-8}$  & $0.47$ & $0.34$ & $0.390\%$ & $0.679\%$ & $1.01\%$\\
		          & $10^{-7}$  & $4.7$  & $3.4$  & -- & $34.7\%$ & $25.8\%$\\
		\midrule
		$10^{-1}$ & $10^{-7}$  & $0.47$ & $0.34$ & $1.91\%$ & $2.14\%$ & $4.64\%$\\
		          & $10^{-6}$  & $4.7$  & $3.4$  & -- & -- & --\\
		\bottomrule
	\end{tabular}
	\caption{Reconstruction under noise below and above the threshold \(\rho\approx1\). The values of \(\rho=\sigma\lambda_L(0)/\omega\) and of the data error \(\max_n|\tilde\lambda_n'(0)-\lambda_n'(0)|/|\lambda_n'(0)|\) are means over~(c)--(e). Last three columns: relative \(L^2\) errors; a dash marks a breakdown.}
	\label{tab:noise_rec}
\end{table}

The method is therefore robust to noise in the spectral data as long as \(\rho\ll1\), that is, as long as for every \(n\leqslant L\) the noise in \(\tilde\lambda_n'(0)\) is much smaller than \(\lambda_n'(0)\) itself. In the experiments, \(\rho\approx0.5\) already suffices for the reconstruction to be as accurate as without noise, while for \(\rho\approx5\) it deteriorates rapidly. The trade-off in \(\omega\) remains: a larger \(\omega\) admits more noise, but its modelling error increases the reconstruction error, as seen in the last block of Table~\ref{tab:noise_rec}. The required accuracies of the eigenvalues are realistic only if the two spectra are measured very precisely, or if their noise is strongly correlated and largely cancels in the difference. A direct measurement of \(\lambda_n'(0)\), or a regularized estimate from several values of \(\omega\), would avoid the amplification by \(1/\omega\).

\medskip

\subsection{Conclusions}
The numerical experiments confirm that the boundary control method reconstructs the wave speed accurately from the two finite spectra alone. For \(c\equiv1\), the sources computed from the spectral data generate waves that approximate the indicators of the domains of influence, as predicted by Proposition~\ref{2mainres}; this was verified with an independent finite difference solver. The combined \(L^2\)--\(H^1\) penalty permits sharper sources without Gibbs oscillations and yields a smaller misfit than the pure \(L^2\) penalty. The smooth profiles are reconstructed with small errors, which are concentrated near the endpoints of \(\Omega\), where the regularized differentiation of \(V\) has end effects. For the smooth profile outside \(\mathcal{C}\), the violation of the class assumptions affects only the right end and is removed by a simple linear extrapolation. For the piecewise constant profiles, the constant levels are recovered accurately, and the errors are concentrated at the jumps. The noise study shows that the reconstruction is stable as long as the noise in the eigenvalues is small compared with the Robin parameter \(\omega\). A larger \(\omega\) tolerates more noise, at the cost of a larger modelling error of the finite difference approximation of~\(\lambda_n'(0)\).

These results should be read with the choice of the parameters in mind. The mesh size \(N\), the regularization parameters \(\alpha\) and \(\beta\), the method for the differentiation of \(V\) and its regularization parameter were selected by a grid search minimizing the relative \(L^2\) error with respect to the exact wave speed. The margin \(\eta\) and the parameters of the linear extrapolation for profile~(d) were chosen in the same way. The discretization of the data, that is, \(L\), \(\omega\) and \(h\), and the range of indices in the fit for \(\hat\tau(1)\) were fixed a priori. The reported errors therefore show the accuracy attainable with the method, and a finer or systematic optimization may improve them further.

In practice such a calibration is usually not available. The true wave speed is unknown, so the error minimized by the grid search cannot be computed. Data-driven parameter choice rules avoid this, but they have their own requirements: the discrepancy principle needs the noise level, and heuristic rules such as the L-curve or generalized cross-validation may fail for the strongly ill-posed differentiation of \(V\). Moreover, the parameters interact, so they cannot be tuned one at a time. The optimal regularization depends on \(N\) and \(\eta\), and the optimal \(\eta\) depends on the regularization. Some parameters are not free at all: \(L\) and \(\omega\) are dictated by the measurement, and by the noise study, \(\omega\) also determines the admissible noise level. Future work therefore includes a data-driven choice of the parameters, an analysis of how they should be coupled to the discretization and the noise level, and a direct or regularized estimation of \(\lambda_n'(0)\) that avoids the amplification of noise by \(1/\omega\).

\medskip

\section*{Acknowledgement}
The authors thank Professor Nuutti Hyv\"onen of Aalto University for helping to establish the collaboration that led to this paper.

This work was supported by the Finnish Ministry of Education
and Culture’s Pilot for Doctoral Programmes (Pilot project Mathematics of Sensing, Imaging and Modelling), the FAME flagship of the Research Council of Finland (Flagship of Advanced Mathematics for Sensing Imaging and Modelling, grant 359182, grant 359181), the Centre of Excellence of Inverse Modelling and Imaging of Research Council of Finland (grant 336786), and the European Research Council (the Advanced Grant project 101097198). Views and opinions expressed are those of the authors only and do not necessarily reflect those of the European Union or the other funding organizations.

\section*{Data availability}
The code implementing the reconstruction algorithm of Section~\ref{compimpl} and reproducing the experiments of Section~\ref{numres} is openly available at \url{https://github.com/pkulikov239/bcm-inverse-spectral} and archived at Zenodo, \url{https://doi.org/10.5281/zenodo.22961134}. The results reported here were obtained with version~1.0, \url{https://doi.org/10.5281/zenodo.22961135}. All spectral data are simulated by this code; no experimental data were used.

\bibliographystyle{plain} 
\bibliography{references.bib}	

\vspace{1.5cm}
\begin{flushleft}
	\begin{footnotesize}
		\sc Antti Hannukainen:
		
		Department of Mathematics and Systems Analysis, Aalto University
		
		Otakaari 1B,
		
		Espoo, 02150, Finland
		
		\textit{E-mail address:} \tt{antti.hannukainen@aalto.fi}
	\end{footnotesize}
\end{flushleft}
\vspace{0.3cm}
\begin{flushleft}
	\begin{footnotesize}
		\sc Petr Kulikov:
		
		Department of Mathematics and Statistics, University of Helsinki
		
		Pietari Kalmin katu 5,
		
		Helsinki, 00560, Finland
		
		\textit{E-mail address:} \tt{petr.kulikov@helsinki.fi}
	\end{footnotesize}
\end{flushleft}
\vspace{0.3cm}
\begin{flushleft}
	\begin{footnotesize}
		\sc Matti Lassas:
		
		Department of Mathematics and Statistics, University of Helsinki
		
		Pietari Kalmin katu 5,
		
		Helsinki, 00560, Finland
		
		\textit{E-mail address:} \tt{matti.lassas@helsinki.fi}
	\end{footnotesize}
\end{flushleft}
\vspace{0.3cm}
\begin{flushleft}
	\begin{footnotesize}
		\sc Lauri Oksanen:
		
		Department of Mathematics and Statistics, University of Helsinki
		
		Pietari Kalmin katu 5,
		
		Helsinki, 00560, Finland
		
		\textit{E-mail address:} \tt{lauri.oksanen@helsinki.fi}
	\end{footnotesize}
\end{flushleft}
\end{document}